\def\Op{\mathcal{Q}}
\def\Kcal{\mathcal{K}}
\def\Ordo{\mathcal{O}}
\def\Real{\mathbb{R}}  
\def\Cnumbers{\mathbb{C}}  
\DeclareMathOperator*{\conv}{conv}
\DeclareMathOperator*{\argmin}{argmin}
\DeclareMathOperator*{\supp}{supp}
\newcommand{\lbeq}[1]{{\label{OR:eq:#1}}}
\newcommand{\lbfig}[1]{{\label{OR:fig:#1}}}
\newcommand{\lbsec}[1]{{\label{OR:sec:#1}}}
\newcommand{\lbtheo}[1]{{\label{OR:theo:#1}}}
\newcommand{\lblem}[1]{{\label{OR:lem:#1}}}
\newcommand{\lbrem}[1]{{\label{OR:rem:#1}}}
\newcommand{\eq}[1]{{(\ref{OR:eq:#1})}}
\newcommand{\fig}[1]{{Figure~\ref{OR:fig:#1}}}
\newcommand{\sect}[1]{{Section~\ref{OR:sec:#1}}}
\newcommand{\theo}[1]{{Theorem~\ref{OR:theo:#1}}}
\newcommand{\lem}[1]{{Lemma~\ref{OR:lem:#1}}}
\newcommand{\rem}[1]{{Remark~\ref{OR:rem:#1}}}
\newcommand{\be}[1]{\begin{equation} \lbeq{#1}}
\newcommand{\ee}{\end{equation}}
\newcommand{\bea}[1]{\begin{eqnarray} \lbeq{#1}}
\newcommand{\eea}{\end{eqnarray}}
\newcommand{\bear}{\begin{array}}
\newcommand{\eear}{\end{array}}
\newtheorem{remark}{Remark}[section]
\newtheorem{theorem}{Theorem}
\newtheorem{lemma}{Lemma}
\newenvironment{proof}{\textit{Proof:}\ }{$~\Box$}
\title{Error Estimates for Gaussian Beam Superpositions}
\author{Hailiang Liu\thanks{Department of Mathematics, Iowa State University, Ames, IA 50011, USA. ({\tt hliu@iastate.edu}).} \and Olof Runborg\thanks{Department of Numerical Analysis, CSC, KTH, 100 44 Stockholm, Sweden and Swedish e-Science Research Center (SeRC), KTH, 100 44 Stockholm, Sweden. ({\tt olofr@nada.kth.se}).} \and Nicolay M. Tanushev\thanks{Department of Mathematics, The University of Texas at Austin, 1 University Station, C1200, Austin, TX 78712, USA. ({\tt nicktan@math.utexas.edu}).} }
\begin{document}
\maketitle

\begin{abstract}
Gaussian beams are asymptotically valid high frequency solutions to hyperbolic
partial differential equations, concentrated on a single curve through the
physical domain. They can also be extended to some dispersive wave equations,
such as the Schr\"odinger equation. Superpositions of Gaussian beams provide a
powerful tool to generate more general high frequency solutions that are not
necessarily concentrated on a single curve. This work is concerned with the
accuracy of Gaussian beam superpositions in terms of the wavelength
$\varepsilon$. We present a systematic construction of Gaussian beam
superpositions for all strictly hyperbolic and Schr\"odinger equations subject
to highly oscillatory initial data of the form $Ae^{i\Phi/\varepsilon}$. Through
a careful estimate of an oscillatory integral operator, we prove that the $k$-th
order Gaussian beam superposition converges to the original wave field at a rate
proportional to $\varepsilon^{k/2}$ in the appropriate norm dictated by the
well-posedness estimate. In particular, we prove that the Gaussian beam
superposition converges at this rate for the acoustic wave equation in the
standard, $\varepsilon$-scaled, energy norm and for the Schr\"odinger equation
in the $L^2$ norm. The obtained results are valid for any number of spatial
dimensions and are unaffected by the presence of caustics. We present a
numerical study of convergence for the constant coefficient acoustic wave
equation in $\Real^2$ to analyze the sharpness of the theoretical results.
\end{abstract}

%

\pagestyle{myheadings}
\thispagestyle{plain}
\markboth{H. LIU, O. RUNBORG AND N. M. TANUSHEV}{ERROR ESTIMATES FOR GAUSSIAN BEAM SUPERPOSITIONS}


\section{Introduction}
In simulations of high frequency wave propagation, a large number of grid points is
needed to resolve and maintain an accurate in time representation of the wave field.
Consequently, in this regime, direct numerical simulations are computationally
expensive and at sufficiently high frequencies, such simulations are no longer
feasible. To circumvent this difficulty, approximate high frequency
asymptotically valid methods are often used. One such popular approach is
geometrical optics \cite{EngRun:03, Runborg:07}, which is obtained in the limit
when the frequency tends to infinity. This method is also known as the WKB
method or ray-tracing. The solution of the partial differential equation (PDE)
is assumed to be of the form
\begin{align}\lbeq{GOform}
a(t,y,\varepsilon) e^{i\phi(t,y)/\varepsilon},
\end{align}
where $1/\varepsilon$ is the large high frequency parameter, $\phi$ is the phase, and $a$ is the
amplitude of the solution having the Debye expansion in terms of $\varepsilon$, $a(t,y,\varepsilon)=\sum_{j=0}^N \varepsilon^j a_j(t,y)$. The phase and amplitudes $a_j$ are
independent of the frequency and vary on a much coarser scale than the full wave
solution. They can therefore be computed at a computational cost independent of
the frequency. However, the geometrical optics approximation breaks down at
caustics, where rays concentrate and the predicted amplitude is unbounded
\cite{Ludwig:66, Kravtsov:64}. The consideration of difficulties caused by
caustics, beginning with Keller in \cite{Keller:1958} and Maslov and Fedoriuk (see
\cite{MaslovFedoriuk:1981}), led to the development of the theory of Fourier
integral operators, e.g., as given by H\"ormander in \cite{HormanderFIO:1971}.

Gaussian beams form another high frequency asymptotic model which is closely
related to geometrical optics. However, unlike geometrical optics, Gaussian
beams do not breakdown at caustics. For Gaussian beams, the solution is also
assumed to be of the geometrical optics form \eq{GOform}, but a Gaussian beam is
a localized solution that concentrates near a single ray of geometrical
optics in space-time. Although the phase function is real-valued along the
central ray, Gaussian beams have a complex-valued phase function off their
central ray. The imaginary part of the phase is chosen such that the solution
decays exponentially away from the central ray, maintaining a Gaussian-shaped
profile. To form a Gaussian beam solution, we first pick a ray and solve a
system of ordinary differential equations (ODEs) along it to find the values of
the phase, its first and second order derivatives and the amplitude on the ray.
To define the phase and amplitude away from this ray to all of space-time, we
extend them using a Taylor expansion. Heuristically speaking, along each ray we
propagate information about the phase and amplitude and their derivatives that
allows us to reconstruct the wave field locally in a Gaussian envelope. The
existence of Gaussian beam solutions has been known since sometime in the 1960's,
first in connection with lasers, see Babi\v{c} and Buldyrev
\cite{BabicBuldyrev:1991}. Later, they were used in the analysis of propagation
of singularities in partial differential equations by H\"ormander
\cite{Hormander:71} and Ralston \cite{Ralston:82}.

In this article, we are interested in the accuracy of Gaussian beam solutions to
$m$-th order linear, strictly hyperbolic PDEs with highly oscillatory initial
data of the type
\begin{align}\lbeq{generalHyperbolicEqIntro}
 Pu &= 0, \qquad (t,y) \in (0,T]\times\Real^n, \\ 
 \partial_t^\ell u(0,y) &= \varepsilon^{-{\ell}}\sum_{j=0}^N \varepsilon^j A_{{\ell},j}(y)e^{i\Phi(y)/\varepsilon} \ , \qquad {\ell}=0,\ldots,m-1 \ ,  \notag
\end{align}
where the strictly hyperbolic operator, $P$, is defined in
\sect{GBforHyperbolicEq}, the real valued phase, $\Phi(y)$ belongs to
$C^\infty(K_0;\Real)$ for some compact set $K_0\subset \Real^n$, and the complex
valued amplitudes, $A_{{\ell},j}(y)$ belong to $C^\infty_0(K_0;\Cnumbers)$.
Furthermore, we will assume that $|\nabla\Phi(y)|$ is bounded away from zero on
$K_0$. As a special case, we include the acoustic wave equation,
\begin{gather}
  u_{tt} - c(y)^2\Delta u = 0, \qquad (t,y)\in(0,T]\times\Real^n \ , \notag \\
  u(0,y)   = \sum_{j=0}^N \varepsilon^j A_{0,j}(y)e^{i\Phi(y)/\varepsilon} \ , \quad\mbox{and}\quad
  u_t(0,y) = \frac{1}{\varepsilon}\sum_{j=0}^N \varepsilon^j A_{1,j}(y)e^{i\Phi(y)/\varepsilon} \ .\lbeq{WaveEquationIntro}
\end{gather}

We also treat the dispersive Schr\"odinger equation,
\begin{align}\lbeq{SchrodingerEquationIntro}
  -i\varepsilon u_t - \frac{\varepsilon^2}{2}\Delta u + V(y)u &= 0, \qquad (t,y)\in (0,T]\times \Real^n, \\
  u(0,y) &=\sum_{j=0}^N \varepsilon^j A_j(y)e^{i\Phi(y)/\varepsilon} \notag \ .
\end{align}
As above, we will assume that $\Phi\in C^\infty(K_0;\Real)$ and $A_j\in
C^\infty_0(K_0;\Cnumbers)$. Furthermore, we assume the potential $V(y)$ is
smooth and bounded along with all its derivatives, $\partial_y^\beta V\in
C^\infty_b(\Real^n)$ for all $\beta$. For the Schr\"odinger equation, the
asymptotic parameter $\varepsilon$ appears in the equation and there is no need
to assume the bound on $|\nabla\Phi|$ that is necessary for strictly hyperbolic
equations.

Since these partial differential equations are linear, it is a natural extension
to consider sums of Gaussian beams to represent more general high frequency
solutions that are not necessarily concentrated on a single ray. This idea was
first introduced by Babi\v{c} and Pankratova in \cite{Babic:1973} and was later
proposed as a method for wave propagation by Popov in \cite{Popov:1982}. The sum, or rather the integral superposition, of Gaussian beams in the simplest first
order form can be written as
\begin{align}\lbeq{GBform1}
   u_{GB}(t,y) =\left(\frac{1}{2\pi\varepsilon}\right)^\frac{n}{2} 
   \int_{K_0} a(t;z)  e^{i\phi(t,y-x(t;z);z)/\varepsilon} dz \ ,
\end{align}
where $K_0$ is a compact subset of $\Real^n$ and the phase that defines the
Gaussian beam is given by
\begin{align}\lbeq{GBform2}
\phi(t,y;z)=\phi_{0}(t;z) + y\cdot p(t;z) + y\cdot \frac12 M(t;z) y\ .
\end{align}
The real vector $p(t;z)$ is the direction of wave propagation and the matrix
$M(t;z)$ has a positive definite imaginary part and it gives Gaussian beams
their profile. Extensions of the above superposition are possible in several
directions, including using higher order Gaussian beams in the superposition and
using a sum of several superpositions to approximate the different modes of wave
propagation. Higher order Gaussian beams are created by using an asymptotic
series for the amplitude and using higher order Taylor expansions to define
the phase and the amplitude functions, see \eq{PhaseWithZ} and \eq{AmpWithZ}.
Also, for higher order beams, a cutoff function \eq{cutoff} is necessary to
avoid spurious growth away from the central ray. Superpositions with higher order
Gaussian beams have an improved asymptotic convergence rate. For $m$-th order
strictly hyperbolic PDEs, which have $m$ pieces of initial data, we use $m$
different Gaussian beam superpositions chosen in such a way so that their sum
approximates the initial data. Each of these superpositions corresponds to one
of the $m$ distinct modes of wave propagation.

Accuracy studies for a Gaussian beam solution $u_{\rm GB}$ have traditionally
focused on how well it asymptotically satisfies the PDE, i.e. the size of the
norm of $Pu_{\rm GB}$ in terms of $\varepsilon$. The question of determining the
error of the Gaussian beam superposition compared to the exact solution was
thought to be a rather difficult problem decades ago, see the conclusion section
of the review article by Babi\v{c} and Popov \cite{BabicPopov:1989}. However,
some progress on estimates of the error has been made in the past few years.
This accuracy study was initiated by Tanushev in \cite{Tanushev:08}, where a
convergence rate was obtained for the initial data. Some earlier results on this
were also established by Klime\v{s} in \cite{Klimes:86}. The part of the error
that is due to the Taylor expansion off the central ray was considered by
Motamed and Runborg in \cite{MotamedRunborg:09} for the Helmholtz equation. Liu
and Ralston \cite{LiuRalston:09,LiuRalston:10} gave rigorous convergence rates
in terms of $\varepsilon$ for the acoustic wave equation in the scaled energy
norm and for the Schr\"odinger equation in the $L^2$ norm. However, the error
estimates they obtained depend on the number of space dimensions in the presence
of caustics, since the projected Hamiltonian flow to physical space becomes
singular at caustics. The superpositions in \eq{GBform1} can also be carried out
over both $x$ and $p$ in full phase space through the Hamiltonian map,
$(z,p_0)\to(x(t;z,p_0),p(t;z,p_0))$, as shown in
\cite{LiuRalston:09,LiuRalston:10}. In this formulation the Hamiltonian flow is
regular and there are no caustics, so we expect to obtain a dimensionally
independent error estimate. This has been confirmed for the wave equation by
Bougacha, Akian and Alexandre in \cite{BougachaEtal:09} for the case of initial
data based on the Fourier--Bros--Iaglonitzer (FBI) transform, a result which is
inspired by the work of Rousse and Swart on the Herman-Kluk propagator for the
Schr\"odinger equation \cite{RousseSwart:07, RousseSwart:09}. From a
computational stand point, the full phase space formulation is more expensive.

Building upon these recent advances, together with an application of a non-squeezing
argument proved in \lem{nonsqueezing}, we are able to provide a definite answer to the
question of accuracy for Gaussian beam superposition solutions. More precisely,
we obtain dimensionally independent estimates for the superposition in physical
space for general $m$-th order strictly hyperbolic PDEs and the Schr\"odinger
equation. Our main result is the following theorem.

\begin{theorem}\lbtheo{ErrorEstWaveSchrod}
Let $u$ be the exact solution to the PDEs considered, 
\eq{generalHyperbolicEqIntro}, \eq{WaveEquationIntro}
and \eq{SchrodingerEquationIntro}, under the stated assumptions
on initial data and partial differential operators.
Moreover, let $u_k$ be the corresponding $k$-th order Gaussian beam
superposition given in \sect{SuperpositionsOfGB}
and \sect{SchrodingerEq}, with a sufficiently
small cutoff parameter $\eta$ when $k>1$.
We then have the following
estimates. 
For the $m$-th order strictly hyperbolic PDE \eq{generalHyperbolicEqIntro},
\begin{align*}
 \varepsilon^{m-1}\sum_{\ell=0}^{m-1}\left\|\partial^{\ell}_t [u(t,\cdot)-u_k(t,\cdot)]\right\|_{H^{m-\ell-1}} \leq C(T)\varepsilon^{k/2}\ .
\end{align*}
For the acoustic wave equation  \eq{WaveEquationIntro},
\begin{align*}
 ||u(t,\cdot)-u_k(t,\cdot)||_{E} \leq C(T)\varepsilon^{k/2}\ ,
\end{align*}
where $||\,\cdot\,||_E$ is the scaled energy norm \eq{ENorm}.
Finally, for the Schr\"odinger equation \eq{SchrodingerEquationIntro},
\begin{align*}
 ||u(t,\cdot)-u_k(t,\cdot)||_{L^2} \leq C(T)\varepsilon^{k/2}\ .
\end{align*}
\end{theorem}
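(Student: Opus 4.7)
The plan is to reduce the global error estimate, for each of the three PDEs, to (a) an estimate of the initial-data error and (b) an estimate of the PDE residual $Pu_k$ on $[0,T]$, via the standard well-posedness inequality in the relevant norm. For the $m$-th order strictly hyperbolic case this is the $\varepsilon$-scaled Sobolev estimate that controls $\varepsilon^{m-1}\sum_{\ell}\|\partial_t^\ell(u-u_k)\|_{H^{m-\ell-1}}$ by the same quantity at $t=0$ plus a time integral of $\|Pu_k\|$; for the acoustic wave equation it is the scaled energy estimate, and for Schr\"odinger it is the standard $L^2$ isometry. Hence it suffices to prove that both the initial data error and $\|Pu_k(t,\cdot)\|$ are $O(\varepsilon^{k/2})$ uniformly on $[0,T]$ in the appropriate norms.

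For (a) I would adapt the argument of Tanushev and of Klime\v{s}: the phase and amplitude in \eq{GBform2} are defined through Taylor expansion so that the initial superposition \eq{GBform1} matches the Cauchy data $A(y)e^{i\Phi(y)/\varepsilon}$ modulo terms that are controlled by the $k$-th order Taylor remainder times the Gaussian profile of the beams, yielding the $\varepsilon^{k/2}$ initial error. For (b), I would apply $P$ under the integral in \eq{GBform1}. By construction of the $k$-th order beam, the symbol of $P$ acting on a single beam annihilates terms to high order along the central ray; the leftover is a polynomial in $y-x(t;z)$ multiplied by the beam. Since the imaginary part of $M(t;z)$ is positive definite, this polynomial growth is tamed by a Gaussian of width $\sqrt{\varepsilon}$, and, with the cutoff $\eta$ switched on for $k>1$, the integrand of $Pu_k$ is pointwise bounded by $\varepsilon^{k/2}$ times a Gaussian centered at $x(t;z)$.

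The crux is then passing from this pointwise bound on the integrand to an $L^2$ bound on the integral over $z\in K_0$ that is uniform in $\varepsilon$ \emph{and} independent of the spatial dimension $n$, even across caustics. A direct change of variables $z\mapsto x(t;z)$ breaks down at caustics where the Jacobian vanishes, and a crude $L^1$-in-$z$/$L^\infty$-in-$y$ bound loses factors of $\varepsilon^{n/4}$. My strategy is to estimate the integral operator with kernel $K(y,z)=b(t,y,z)e^{i\phi(t,y-x(t;z);z)/\varepsilon}$ by Schur's test, and to control the critical sup $\sup_y\int|K(y,z)|\,dz$ via \lem{nonsqueezing}. The non-squeezing lemma provides exactly the ingredient that replaces the (possibly degenerate) physical Jacobian by a symplectic quantity of the Hamiltonian flow, so that the combined decay from $\mathrm{Im}\,\phi$ and the preserved phase-space volume delivers the $\varepsilon^{n/2}$ needed to offset the $(2\pi\varepsilon)^{-n/2}$ prefactor.

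The main obstacle is precisely this caustic-uniform oscillatory integral estimate: showing that whenever the real part of $\phi(t,y-x(t;z);z)$ fails to have a non-degenerate stationary point in $z$, the imaginary part automatically supplies enough quadratic decay in the transverse directions for a Gaussian-type bound to persist. This is where \lem{nonsqueezing} does the essential work and must be matched carefully with the cutoff scale $\eta$ to absorb the Taylor remainders. Once this uniform integral-operator bound is established, the three cases of the theorem follow by specializing to the norms required by each well-posedness estimate and tracking the powers of $\varepsilon$ multiplying the derivatives $\partial_t^\ell u_k$, which are generated by differentiation of $e^{i\phi/\varepsilon}$ and absorbed by the $\varepsilon^{m-1}$ and $\varepsilon$-scaling in the norms.
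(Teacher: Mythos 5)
Your overall architecture coincides with the paper's: the well-posedness estimates of \theo{wellPosednessAll} split the error into an initial-data part, which is handled exactly as you propose by adapting Tanushev's argument (\lem{ApproxLemma} and \theo{InitialData}), and an evolution part, which is reduced --- via the Taylor-remainder structure of the beam residual --- to bounding oscillatory integral operators $\Op_{\alpha,g,\eta}$ uniformly in $\varepsilon$ (\lem{PuInTermsOfQ}). Up to that point your proposal tracks the paper faithfully, including the bookkeeping of the powers of $\varepsilon$ against the $(2\pi\varepsilon)^{-n/2}$ prefactor and the scalings $q=0,1,-1$ in the three well-posedness estimates.

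The gap is in the crucial step. You propose applying Schur's test \emph{directly} to the kernel $K(y,z)$ of $\Op_{\alpha,g,\eta}$, controlling $\sup_y\int|K(y,z)|\,dz$ via \lem{nonsqueezing}. This cannot work: once you take the modulus, $|K(y,z)|\leq C\varepsilon^{-n/2}e^{-\delta|y-x(t;z)|^2/2\varepsilon}$ retains only the physical distance $|y-x(t;z)|$, while the momenta $p(t;z)$ --- the only quantities the non-squeezing lemma can bring into play when the map $z\mapsto x(t;z)$ degenerates --- have been erased together with the oscillatory factor, so there is nothing for \eq{NonSqueezeIneq} to act on. Concretely, at a perfect focus where $x(t;z)\equiv x_0$ on an open subset of $K_0$, one has $\sup_y\int_{K_0}|K|\,dz\sim\varepsilon^{-n/2}$ while $\sup_z\int|K|\,dy\sim 1$, and Schur yields only $\|\Op_{\alpha,g,\eta}\|\lesssim\varepsilon^{-n/4}$ --- precisely the dimension-dependent loss of Liu and Ralston that the theorem is designed to remove. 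The paper's proof of \theo{QL2norm} circumvents this by passing to $\Op^*\Op$, whose kernel $I^\varepsilon_{\alpha,g}(t,z,z')$ is a $y$-integral performed \emph{before} any modulus is taken; its phase $\psi$ satisfies $\nabla_y\psi\approx p(t;z')-p(t;z)$ up to controllable errors (\lem{PhaseEst}). On the caustic set $D_2=\{|x(t;z)-x(t;z')|<\theta|z-z'|\}$, non-squeezing forces $|\nabla_y\psi|\geq C(\theta,\mu)|z-z'|$, and the non-stationary phase lemma (\lem{statphase}) converts this momentum separation into decay of order $(1+|z-z'|/\sqrt{\varepsilon})^{-K}$, which integrates in $z'$ to the required $\varepsilon^{n/2}$; on the complement $D_1$ the Gaussian factor $e^{-\delta|x(t;z)-x(t;z')|^2/4\varepsilon}$ suffices. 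Note also that your heuristic --- ``the imaginary part automatically supplies enough quadratic decay'' at caustics --- points in the wrong direction: at a caustic $|x(t;z)-x(t;z')|$ can vanish while $|z-z'|$ stays large, so $\Im\psi$ gives no smallness there; it is the \emph{real} oscillation in $y$, integrated by parts, that does the work. (Likewise, non-squeezing is a two-sided Lipschitz bound on the restricted flow, not a volume-preservation statement.) Your initial-data and residual-structure steps are sound; the argument stands or falls on replacing your direct Schur estimate by the adjoint ($\Op^*\Op$) argument.
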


This improves on the results in \cite{LiuRalston:09,LiuRalston:10} where
the last two error estimates were also proved, but with an additional
factor $\varepsilon^{-\gamma}$ in the right hand side, where $\gamma=(n-1)/4$ for the
wave equation and $\gamma=n/4$ for the Schr\"odinger equation.
Note that the rescaling by $\varepsilon^{m-1}$ in the first estimate
is convenient here, since it exactly
balances the rate at which the corresponding norm of the initial data for the PDE
\eq{generalHyperbolicEqIntro} goes to infinity as $\varepsilon\to 0$.

At present there is considerable interest in using numerical methods based on
superpositions of beams to resolve high frequency waves near caustics, which
began in the 1980's with numerical methods for wave propagation in
\cite{Popov:1982,Katchalov_Popov:1981,Cerveny_etal:1982} and more specifically
in geophysical applications in \cite{Klimes:1984,Hill:1990,Hill:2001}. Recent
work in this direction includes simulations of gravity waves \cite{TQR:2007}, of
the semiclassical Schr\"{o}dinger equation \cite{JinWuYang:08,LeungQian:09}, and
of acoustic wave equations \cite{MotRun_GB1,Tanushev:08}. Numerical techniques
based on both Lagrangian and Eulerian formulations of the problem have been
devised \cite{JinWuYang:08,LeuQiaBur:07,LeungQian:09,MotRun_GB1}. A numerical
approach for general high frequency initial data closely related to the FBI
transform, but avoiding the cost of superposing over all of phase space, is presented
in \cite{QianYing:2010} for the Schr\"odinger equation. Numerical approaches for
treating general high frequency initial data for superposition over physical
space were considered in \cite{TET:2009,AETT:2010} for the wave equation. Our
theoretical results show that the numerical solutions found in these papers will
be accurate when $\varepsilon\ll 1$.
 
To test the sharpness of the theoretical convergence rates, we present a short
numerical study in the case of the acoustic wave equation with constant sound
speed. Our numerical results indicate that the theoretical rates are sharp for
even order $k$, but similar to the result in \cite{MotamedRunborg:09}, we
observe a gain in the convergence rate of a factor of $\varepsilon^{1/2}$ for beams of
odd order $k$, which suggests that the actual convergence rate is
$\Ordo(\varepsilon^{\lceil k/2\rceil})$.

This paper is organized as follows: \sect{GaussianBeams} introduces Gaussian
beams and their superpositions for $m$-th order strictly hyperbolic equations.
Furthermore, we construct Gaussian beams for the Schr\"odinger equation.
\sect{ErrorEstForGB} is devoted to error estimates for Gaussian beam
superpositions. Detailed norm estimates of the oscillatory operators used in
obtaining the error estimates are given in \sect{NormEstQ}. Numerical validation
of our results is finally presented in \sect{Numerical}.

\section{Construction of Gaussian Beams} \lbsec{GaussianBeams}

In this section, we outline the construction of Gaussian beam superpositions for
strictly hyperbolic PDEs. We also construct Gaussian beams for the Schr\"odinger
equation.

\subsection{Hyperbolic Equations}\lbsec{GBforHyperbolicEq}

Let $P=P_m+L$ be a linear strictly hyperbolic $m$-th order partial differential
operator (PDO) in $n$ dimensions with
\be{PHypDef}
   P_m = \partial^m_t + \sum_{j=0}^{m-1} \left(\sum_{|\beta|=m-j}g_{\beta}(t,y)\partial_y^\beta\right)\partial_t^j \ ,
\ee
and $L$ a differential operator of order $m-1$. The principal symbol of $P$, denoted by $\sigma_m(t,y,\tau,p)$,
is defined by the formal relationship $P_m = \sigma_m(t,y,-i\partial_t,-i\partial_y)$.
Following \cite{HormanderIII,Ralston:82}, we make the assumptions:
\begin{enumerate}
\renewcommand{\theenumi}{(H\arabic{enumi})}
\renewcommand{\labelenumi}{(H\arabic{enumi})}
\item\label{HypAsmptSmooth} The coefficients $g_{\beta}(t,y)$ are smooth
functions, bounded in $t$ and $y$ along with all their derivatives,
$\partial_t^\ell\partial_y^\alpha g_{\beta}\in C^\infty_b(\Real^n)$ for all
$\ell,\alpha$.
\item\label{HypAsmptRoots} For $|p|\neq 0$, the principal symbol
$\sigma_m(t,y,\tau,p)$ has $m$ distinct real roots, when it is considered as a
polynomial in $\tau$.
\item\label{HypAsmptUnifSimple} These roots are uniformly simple in the sense
that
\be{uniformlydistinct}
\left|\frac{\partial\sigma_m(t,y,\tau,p)}{\partial\tau}\right|\geq c_0 |p|^{m-1}
\quad {\rm whenever}\quad \sigma_m(t,y,\tau,p)=0.
\ee
\end{enumerate}

We consider a null bicharacteristic $(t(s),x(s),\tau(s),p(s))$ associated with the principal symbol $\sigma_m$, defined by the Hamiltonian system of ODEs:
\begin{align}\lbeq{HamiltonianFlow}
  \dot{t}    &= \frac{\partial \sigma_m}{\partial\tau}\ ,
 &\dot{x}    &= \frac{\partial \sigma_m}{\partial p}\ ,
 &\dot{\tau} &= -\frac{\partial \sigma_m}{\partial t} \ ,
 &\dot{p}  &= -\frac{\partial \sigma_m}{\partial y} \ ,
\end{align}
and initial conditions $(t(0),x(0),\tau(0),p(0))$ such that
$\sigma_m(t(0),x(0),\tau(0),p(0))=0$, with $p(0)\neq 0$. Note that for fixed
$t(0)$, $x(0)$ and $p(0)$, we have $m$ distinct choices for $\tau(0)$, equal to
the $m$ distinct real roots of $\sigma_m(t(0),x(0),\tau,p(0))$. These choices
for $\tau(0)$ give $m$ distinct waves that travel in different directions. The
curve $(t(s),x(s))$ in physical space is the space-time ray that Gaussian beams
are concentrated near. For a proof that the Gaussian beam construction is only
possible near this ray, we refer the reader to $\cite{Ralston:82}$. 

The following lemma summarizes some results related to the Hamiltonian flow
above. We will use the second point to argue that changing variables $s\to t$ is
always allowed. The last point is needed in the proof of the non-squeezing lemma
(\lem{nonsqueezing}).
\begin{lemma}\lblem{HyperbolicHamiltonianLemma}
Let $(t(s),x(s),\tau(s),p(s))$ be a null bicharacteristic of the Hamiltonian flow
\eq{HamiltonianFlow} with initial data such that $|p(0)|\neq 0$ and
$\sigma_m(t(0),x(0),\tau(0),p(0))=0$. Without loss of generality, assume that the parametrization is taken so that $\dot{t}(0)\geq 0$. Then for $s\in[0,\infty)$, we have
\begin{enumerate}
 \item $\sigma_m(s) = \sigma_m(t(s),x(s),\tau(s),p(s)) = 0$,
 \item $t(s)-t(0)$ is strictly increasing and
 $$
    t(s)-t(0) \geq C_1 \begin{cases} s\ , & m=1\ , \\
    \log\left(1+C_2|p(0)|^{m-1}\ s \right)\ , & m>1\ ,
    \end{cases}
 $$
 where $C_1$ and $C_2$ are independent of $s$ and initial data.
 \item There is a constant $\lambda $ independent of $s$ and initial data such that
 \begin{align}\lbeq{hamres3}
 |p(s)|\geq |p(0)|e^{-\lambda (t(s)-t(0))}.
 \end{align}
\end{enumerate}
\end{lemma}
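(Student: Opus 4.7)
The plan is to take the three claims in turn, noting that (2) and (3) are coupled and will require a short bootstrap. Claim (1) is classical: differentiating $\sigma_m(t(s),x(s),\tau(s),p(s))$ in $s$ and substituting the Hamiltonian equations \eq{HamiltonianFlow} makes the four terms cancel pairwise, so $\sigma_m$ is constant along the flow and equals its initial value $0$.

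The backbone of the remaining argument is a pair of pointwise bounds for $\partial_\tau\sigma_m$ and $\partial_y\sigma_m$ on the null set $\{\sigma_m=0\}$. By the form \eq{PHypDef}, $\sigma_m$ is a polynomial in $(\tau,p)$ homogeneous of degree $m$, and by (H1) its coefficients and their $y$-derivatives are uniformly bounded in $(t,y)$. I would first apply (H2) and (H3) to the rescaled monic polynomial $\sigma_m(t,y,\,\cdot\,,p/|p|)$, whose coefficients are uniformly bounded, and invoke a standard root-bound theorem to conclude that whenever $\sigma_m(t,y,\tau,p)=0$ and $p\neq 0$ one has $|\tau|\leq C|p|$. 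Combined with the polynomial form of $\partial_y\sigma_m$, this yields
\[
|\partial_y\sigma_m(t,y,\tau,p)| \leq C_1\,|p|^m,
\]
while (H3) itself reads $|\partial_\tau\sigma_m(t,y,\tau,p)| \geq c_0\,|p|^{m-1}$ on the same set.

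Next I would use these bounds to prove claim (3). Let $[0,s^*)$ be the maximal interval on which $p(s)\neq 0$. On this interval $\dot t = \partial_\tau\sigma_m$ is nonzero; since $\dot t(0)\geq 0$ and $|\dot t|\geq c_0|p|^{m-1}>0$, continuity forces $\dot t>0$, so $t$ may be used as the independent variable. Writing $f(t) = |p(s(t))|^2$, the Hamiltonian equations give
\[
\left|\frac{df}{dt}\right| = \frac{2\,|p\cdot \partial_y\sigma_m|}{|\partial_\tau\sigma_m|} \leq \frac{2C_1}{c_0}\,f,
\]
and Grönwall yields $|p(s)|\geq|p(0)|\,e^{-\lambda(t(s)-t(0))}$ with $\lambda = C_1/c_0$. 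The right-hand side is strictly positive on any bounded $t$-interval, so a standard continuation argument rules out $s^*<\infty$, giving (3) on $[0,\infty)$.

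Claim (2) then follows quickly: $\dot t\geq c_0|p(s)|^{m-1}>0$ shows $t(s)$ is strictly increasing, and inserting the lower bound from (3) produces the scalar differential inequality $T'(s)\geq c_0|p(0)|^{m-1}e^{-(m-1)\lambda T(s)}$ for $T(s)=t(s)-t(0)$. Direct integration for $m=1$ and separation of variables for $m>1$ give the linear and logarithmic lower bounds stated in the lemma. The main technical obstacle is the uniform bound $|\tau|\leq C|p|$ on $\{\sigma_m=0\}$: this is where (H1)--(H3) must be combined carefully via homogeneity and a root-bound theorem for monic polynomials with bounded coefficients. Once that bound is available, the rest is Grönwall plus a one-line continuation argument.
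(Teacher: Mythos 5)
Your proposal is correct and takes essentially the same route as the paper's proof: conservation of $\sigma_m$ along the flow, the bound $|\tau|\leq C|p|$ on the null set obtained from homogeneity plus a root bound for a polynomial with bounded coefficients (you normalize by $|p|$ to get a monic polynomial in $\tau$; the paper equivalently sets $\xi=p/\tau$ and lower-bounds $|\xi|$), the consequent estimates $|\dot p|=|\partial_y\sigma_m|\leq C_1|p|^m$ and $\dot t=\partial_\tau\sigma_m\geq c_0|p|^{m-1}$, a Gr\"onwall argument with $\lambda=C_1/c_0$ for claim (3), and integration of $T'(s)\geq c_0|p(0)|^{m-1}e^{-(m-1)\lambda T(s)}$ for claim (2). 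The only structural deviations are minor: the paper secures $|p(s)|\neq 0$ for all $s$ up front (if $p$ vanished then $\tau$ would too, by $\sigma_m=0$ and homogeneity, and ODE uniqueness forbids reaching that invariant set), whereas you close the same point by a maximal-interval continuation bootstrap, and the paper runs the Gr\"onwall step in the $s$-variable via the integrating factor $|p|^2e^{2\lambda\tilde t}$ rather than after changing variables to $t$.
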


\begin{proof}
We compute
\begin{align*}
 \dot{\sigma}_m(s) & = \frac{\partial\sigma_m}{\partial t} \dot{t} + \frac{\partial\sigma_m}{\partial y} \cdot \dot{x} + \frac{\partial\sigma_m}{\partial \tau} \dot{\tau} + \frac{\partial\sigma_m}{\partial p} \cdot \dot{p}\\
& = \frac{\partial\sigma_m}{\partial t} \frac{\partial \sigma_m}{\partial\tau} + \frac{\partial\sigma_m}{\partial y} \cdot \frac{\partial \sigma_m}{\partial p} - \frac{\partial\sigma_m}{\partial \tau} \frac{\partial \sigma_m}{\partial t} - \frac{\partial\sigma_m}{\partial p} \cdot \frac{\partial \sigma_m}{\partial y} = 0,
\end{align*}
and since $\sigma_m(0)=0$, we have that $\sigma_m(s)=0$, proving the first
point.

Since $\sigma(x,t,p,\tau)=0$, we note that $|p(s_0)|=0$ implies that $\tau(s_0)=0$. Thus, if
$|p(s_0)|=0$ then $\dot{p}(s_0)=\frac{\partial\sigma_m}{\partial y}(s_0)=0$.
Hence, we have that for all $s\in[0,\infty)$, $|p(s)|\neq 0$ by uniqueness for
solutions of ODEs and $|p(0)|\neq 0$. Recall that for a polynomial with distinct
root, both the polynomial and its derivative cannot vanish at the same point.
Since $|p(s)|\neq 0$ and strict hyperbolicity imply that
$\sigma_m(t(s),x(s),\tau,p(s))$, as a polynomial of $\tau$, has distinct roots
and $\sigma_m=0$ on the null bicharacteristic, we have that
$\frac{\partial\sigma_m}{\partial\tau}(s)\neq 0$. Continuity implies that
$\dot{t}(s) = \frac{\partial\sigma_m}{\partial\tau}(s)$ never changes sign and,
by the choice in parametrization, we have that $\dot{t}(s)>0$. Hence, $t(s)-t(0)$ is strictly
increasing for all $s\in[0,\infty)$.

We next show that there is a constant $C$ such that $|\tau(s)|\leq C |p(s)|$ for all $s$
and initial data. This is obviously true if $\tau=0$. When $\tau\neq 0$,
let $\xi = p/\tau$ and observe that, by homogeneity,
$$
  0 = \sigma_m(t,y,\tau,p) = \tau^m\sigma_m(t,y,1,\xi)\ ,
$$
on the null bicharacteristic. Hence $\xi$ is a root of the polynomial equation
$$
1+\sum_{|\beta|=1}^m g_{\beta}(t,y)\xi^\beta  = 0\ .
$$
Since the coefficients $g_{\beta}(t,y)$ are bounded it follows that there is a
constant such that $|\xi|\geq C >0$, which proves that $|\tau(s)|\leq C |p(s)|$
for all $s$.

We can now bound $\dot{p}(s)$ as follows.
\begin{align*}
  \left|\dot{p}\right| &= 
    \left|\frac{\partial\sigma_m}{\partial y}\right| = 
  \left|\sum_{j=0}^{m-1} \left(\sum_{|\beta|=m-j}\partial_y g_{\beta}(t,y)p^\beta\right)\tau^j
  \right|\\
  &\leq 
  C\sum_{j=0}^{m-1} \sum_{|\beta|=m-j} |p|^{|\beta|}|\tau|^j=
  C\sum_{j=0}^{m-1} \sum_{|\beta|=m-j} |p|^m |\xi|^{-j}\leq c_1 |p|^m\ ,
\end{align*}
for some constant $c_1$ independent of $s$ and initial data.
Let $\lambda = c_1/c_0$ where $c_0$ is the constant in \eq{uniformlydistinct}.
Moreover, set $\tilde{t}(s) = t(s)-t(0)\geq 0$ and note that $\dot{\tilde t}(s)=\dot{t}(s)>0$. Then, 
\begin{align*}
   \frac{d}{ds} |p(s)|^2e^{2\lambda \tilde{t}(s)} &= 2\; \dot{p}\cdot p\;e^{2\lambda \tilde{t}}+ 2\lambda \dot{t}|p|^2e^{2\lambda \tilde{t}} 
    \geq -2|\dot{p}| |p|e^{2\lambda \tilde{t}}+ 
    2\lambda \frac{\partial\sigma_m}{\partial\tau}|p|^2e^{2\lambda \tilde{t}}
    \\&\geq
    -2c_1|p|^{m+1}e^{2\lambda \tilde{t}}+ 2\lambda c_0|p|^{m+1}e^{2\lambda \tilde{t}} =0\ .
\end{align*}
Consequently, $|p(s)|^2e^{2\lambda \tilde{t}(s)}\geq |p(0)|^2$ 
and \eq{hamres3} follows. With $\lambda_m = (m-1)\lambda$, we obtain
$$
  \dot{t} = 
  \frac{\partial\sigma_m(t,y,\tau,p)}{\partial\tau}\geq c_0 |p|^{m-1}
  \geq c_0|p(0)|^{m-1} e^{-\lambda_m \tilde{t}(s)}\ .
$$
It follows that  $\tilde{t}(s)\geq c_0 s$ for $m=1$, since $\lambda_1=0$. For $m>1$,
\begin{align*}
  s c_0|p(0)|^{m-1} &\leq \int_0^s \dot{t}(s') e^{\lambda_m \tilde{t}(s')}ds'
  = \int_0^s\dot{\tilde{t}}(s')e^{\lambda_m \tilde{t}(s')}ds'\\
  &=\int_{0}^{\tilde{t}(s)} e^{\lambda_m \theta }d\theta
   = \frac{e^{\lambda_m \tilde{t}(s)}-1}{\lambda_m} \ .
\end{align*}
This proves the stated logarithmic growth of $\tilde{t}(s)$ for $m>1$.
\end{proof}

As mentioned above, an immediate consequence of this lemma is that we can use
$t$ to parametrize the Hamiltonian flow \eq{HamiltonianFlow} instead of $s$,
since the lemma guarantees that for a fixed $t_0\in[0,T]$, there exists a unique
$s_0$ such that $t(s_0)=t_0$. With a slight abuse of notation we will now write
$x(t)$ and $p(t)$ for the Hamiltonian flow parametrized by $t$.

Following \cite{Ralston:82}, we define a phase function $\phi$ and amplitude
functions $a_j$ via Taylor polynomials. After changing variables $s\to t$ in the
formulation used in \cite{Ralston:82} we can write:
\begin{align}
  \phi(t,y) &= \sum_{|\beta|=0}^{k+1} \frac1{\beta!}\phi_\beta(t) y^\beta  \equiv \phi_0(t) + y\cdot p(t) + y\cdot \frac12 M(t) y + \sum_{|\beta|=3}^{k+1} \frac1{\beta!}\phi_\beta(t) y^\beta \ , \nonumber\\
  a_j(t,y)  &= \sum_{|\beta|=0}^{k-2j-1} \frac1{\beta!}a_{j,\beta}(t) y^\beta \ .
  \lbeq{Taylorexpansion}
\end{align}
We can now define the preliminary $k$-th order Gaussian beam $\tilde{v}_k(t,y)$
as:
\begin{align*}
   \tilde{v}_k(t,y) = \sum_{j=0}^{\lceil k/2 \rceil -1} \varepsilon^j a_j(t,y-x(t))  e^{i\phi(t,y-x(t))/\varepsilon}\ .
\end{align*}
Applying the operator $P$ to this beam and collecting terms containing the same
power of $\varepsilon$, we have
\begin{align*}
 P\tilde{v}_k(t,y) &= \left(\sum_{r=-m}^{J} \varepsilon^r c_r(t,y) \right)e^{i\phi(t,y-{x}(t))/\varepsilon}\ ,
\end{align*}
where $c_r(t,y)$ are smooth functions independent of $\varepsilon$. The
construction in \cite{Ralston:82} then proceeds to make $c_r(t,y)$ vanishe up to
order $k-2(r+m)+1$ on $y=x(t)$. To obtain this, $(x(t),p(t))$ must follow the
Hamiltonian flow and the coefficients in the Taylor polynomials should satisfy
ODEs, which are given as follows. By assumption \ref{HypAsmptRoots} above,
whenever $p\neq 0$, we can define $m$ Hamiltonians $H_\ell(t,x,p)$ implicitly by
the relations $\sigma_m(t,x,-H_\ell(t,x,p),p)=0$ for $\ell=0,\ldots,m-1$. For
any choice $H=H_\ell$, the first several ODEs are
\begin{align}\lbeq{Hsystem}
 \dot{x} &=  \partial_p H(t,x,p) \ , &  \dot{p} &= -\partial_y H(t,x,p) \ , \\
 \dot{\phi}_0 &=-H + p\cdot\partial_p H(t,x,p) \ , &
\dot{M} &= -{A} - {M}{B} - {B}^{\sf T}{M} - {M}C{M} \ ,\notag
\end{align} 
with 
\begin{align*}
{A} = \frac{\partial^2 H}{\partial y^2}\ , \qquad
 {B} = \frac{\partial^2H}{\partial {p}\partial y}  \ ,\qquad
{C} = \frac{\partial^2 H}{\partial p^2}  \ .
\end{align*}
By \eq{hamres3} the Hamitonian will be well-defined for all times if $p(0)\neq
0$. Moreover, we have the following result for the Hessian matrix $M(t)$ of the
phase that guarantees that the leading order shape of the beam stays Gaussian
for all time:
\begin{lemma}[Ralston '82, \cite{Ralston:82}]\lblem{HessianLemma}
Suppose that $M(0)$ is chosen so that it has a positive definite
imaginary part, $M(0)\dot{x}(0)=\dot{p}(0)$, and
$M(0)=M^{\sf T}(0)$. Then, for all $t\in[0,T]$, $M(t)$ 
will be such that $M(t)$ will have a positive definite imaginary part and $M(t)=M^{\sf T}(t)$.
\end{lemma}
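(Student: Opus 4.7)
The approach is the classical Ralston linearization: preserve the symmetry of $M$ by inspection of the matrix Riccati equation, and preserve positivity of $\mathrm{Im}(M)$ by lifting the Riccati equation to a linear Hamiltonian system on pairs of matrices, on which a conserved Hermitian form controls both the invertibility needed to define $M$ and the positivity of its imaginary part.

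First, for the symmetry, I would differentiate $M^{\sf T}-M$ using the Riccati equation $\dot M = -A - MB - B^{\sf T}M - MCM$ together with $A = A^{\sf T}$ and $C = C^{\sf T}$ (they are Hessians of the scalar Hamiltonian $H$), and verify that $M^{\sf T}-M$ satisfies a homogeneous linear matrix ODE. Combined with $M^{\sf T}(0) - M(0) = 0$, uniqueness gives $M(t) = M^{\sf T}(t)$ for all $t \in [0,T]$.

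For the positive definiteness of the imaginary part, I would introduce matrices $Y(t), Z(t) \in \Cnumbers^{n\times n}$ solving the linear Jacobi system
\begin{align*}
\dot Y = BY + CZ, \qquad \dot Z = -AY - B^{\sf T}Z,
\end{align*}
with $Y(0) = I$ and $Z(0) = M(0)$. Linearity with bounded coefficients on $[0,T]$ guarantees global existence of $(Y,Z)$, and differentiating $ZY^{-1}$ confirms that $\tilde M(t) := Z(t) Y(t)^{-1}$ satisfies the same Riccati equation with the same initial condition as $M$, hence $\tilde M = M$ wherever $Y$ is invertible. Using $A = A^{\sf T}$, $C = C^{\sf T}$, and the reality of $A$, $B$, $C$, a direct calculation yields
\begin{align*}
\frac{d}{dt}\bigl(Y^* Z - Z^* Y\bigr) = 0,
\end{align*}
so $Y^*Z - Z^*Y$ is conserved; at $t=0$ it equals $M(0) - M(0)^* = 2i\,\mathrm{Im}(M(0))$, which is $2i$ times a positive definite matrix by hypothesis.

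The step requiring care is showing that $Y(t)$ remains invertible on $[0,T]$, which is a non-focusing argument. If $Y(t_0) v = 0$ for some nonzero $v$, then $v^*(Y^*Z - Z^*Y)(t_0) v = 0$, contradicting the conserved value $2i\, v^* \mathrm{Im}(M(0)) v \neq 0$. Hence $Y(t)$ is invertible and $M(t) = Z(t) Y(t)^{-1}$ is defined throughout $[0,T]$. Finally, for any nonzero $v$, setting $u = Y(t)^{-1} v \neq 0$ one obtains
\begin{align*}
v^*\,\mathrm{Im}(M(t))\,v = \frac{1}{2i}\, u^* \bigl(Y^* Z - Z^* Y\bigr)(t)\, u = u^*\,\mathrm{Im}(M(0))\, u > 0,
\end{align*}
proving that $\mathrm{Im}(M(t))$ is positive definite. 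The hypothesis $M(0)\dot x(0) = \dot p(0)$ does not enter this argument directly; it expresses consistency of the Taylor expansion of the phase with transport along the central ray, and is relevant to the broader beam construction rather than to this stability statement for $M$.
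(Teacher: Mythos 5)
Your proof is correct, and it is essentially the argument of the cited source: the paper itself offers no proof of this lemma, attributing it to Ralston '82, and your linearization of the Riccati equation via the Jacobi system $\dot Y = BY+CZ$, $\dot Z = -AY-B^{\sf T}Z$, the conserved Hermitian form $Y^*Z-Z^*Y$, the non-focusing argument for invertibility of $Y$, and the transfer of positivity through $u=Y^{-1}v$ is precisely the classical proof from that reference (with the symmetry of $M(t)$ handled first by linear ODE uniqueness, which you correctly need before identifying the entrywise imaginary part $\Im M$ with the Hermitian part $(M-M^*)/2i$). Your closing observation is also accurate: the hypothesis $M(0)\dot{x}(0)=\dot{p}(0)$ is not used for this statement about the Riccati flow; it is a compatibility condition inherited from Ralston's space-time formulation of the beam construction rather than an ingredient of the positivity argument.
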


Before we can fully define a Gaussian beam, one last point needs to be
addressed. Since $x(t)$ and $p(t)$ are real, if $\phi_0(0)$ is real and
$M(0)$ chosen as in \lem{HessianLemma}, then the imaginary part of $\phi(t,y)$
will be a positive quadratic plus higher order terms about $x(t)$. Thus, we must
only construct the Gaussian beam in a domain on which the quadratic part is
dominant. To this end, we use a cutoff function $\rho_\eta\in
C^\infty(\Real^n;\Real)$ with cutoff radius $0<\eta\leq\infty$ satisfying,
\begin{align}\lbeq{cutoff}
\rho_\eta(z) \geq 0 \quad \mbox{ and } \quad \rho_\eta(z)= \left\{ \begin{array}{ll}\begin{array}{l} 1 \mbox{ for } |z|\leq\eta \\ 0 \mbox{ for } |z|\geq 2\eta \end{array} &\mbox{ for } 0<\eta<\infty \\ \\\begin{array}{l}1\end{array} &\mbox{ for } \eta=\infty \end{array}\right. \ .
\end{align}
Now, by choosing $\eta>0$ sufficiently small, we can ensure that on the support
of $\rho_\eta(y-x(t))$, $\Im\phi(t,y-x(t))>\delta|y-x(t)|^2$ for $t\in[0,T]$.
However, note that for first order beams the imaginary part of the phase is
quadratic with no higher order terms, so that the cutoff is unnecessary. Thus,
to include this case we let the cutoff function be defined for $\eta=\infty$ by
$\rho_\infty\equiv 1$. We are now ready to finally define the $k$-th order
Gaussian beam $v_k(t,y)$ as:
\begin{align}\lbeq{SingleGBExpansion}
   v_k(t,y) = \sum_{j=0}^{\lceil k/2 \rceil -1} \varepsilon^j \rho_\eta(y-x(t))a_j(t,y-x(t))  e^{i\phi(t,y-x(t))/\varepsilon}\ .
\end{align}

\subsection{Superpositions of Gaussian Beams}\lbsec{SuperpositionsOfGB}

In the previous section, we introduced Gaussian beam solutions that satisfy
$Pu=0$ in an asymptotic sense (see \cite{Ralston:82} for a precise
statement), without too much concern for the values of the solution at $t=0$.
The initial value problem that a single Gaussian beam $v_k(t,y)$ approximates has
initial data that is simply given by its values (and the values of its time
derivatives) at $t=0$. While they resemble the initial conditions for
\eq{generalHyperbolicEqIntro}, they are quite different since for example the
phase, $\phi$, for $v_k$ is complex valued and $v_k$ is concentrated in $y$.

Our goal is to create an asymptotically valid solution to the PDE
\eq{generalHyperbolicEqIntro}, thus we must also consider the $m$ distinct
pieces of initial data given in the form of time derivatives of $u$ at $t=0$. To
generate solutions based on Gaussian beams that approximate the initial data for
\eq{generalHyperbolicEqIntro}, we exploit the linearity properties of $P$. That
is, we use the fact that a linear combination of two Gaussian beams, with
different initial parameters, will also be an asymptotic solution to $Pu=0$,
since each Gaussian beam is itself an asymptotic solution. Building on
this idea, we take a family of Gaussian beams that is indexed by a parameter
$z\in K_0$, where $K_0$ is the compact subset of $\Real^n$ discussed in the
introduction that contains the support of the amplitudes of the initial data
for \eq{generalHyperbolicEqIntro}.
We will use the notation $x_\ell(t;z)$,
$p_\ell(t;z)$, $\phi_\ell(t,y-x(t;z);z)$, etc, to denote the dependence of these
quantities on the indexing parameter $z$ and on the $m$ choices for
$H(t,x,p)=H_{\ell}(t,x,p)$ denoted by $\ell=0,\ldots,m-1$. Thus, we will write
the $k$-th order Gaussian beam $v_{k,\ell}(t,y;z)$. Note that the cutoff radius
$\eta$ may vary with $z$ and $\ell$ between beams, however, as the beam
superposition is taken over the compact set $K_0$ and $0\leq {\ell}\leq m-1$,
there is a minimum value for $\eta$ that will work for all beams in the
superposition. Thus, we form the $k$-th order superposition solution $u_k(t,y)$
as
\begin{align}\lbeq{superpositionForHyperbolic}
   u_k(t,y) = \sum_{{\ell}=0}^{m-1}\left(\frac{1}{2\pi\varepsilon}\right)^\frac{n}{2} \int_{K_0} v_{k,\ell}(t,y;z)dz \ ,
\end{align}
where the phase and amplitude that define the Gaussian beam,
\begin{align*}
 v_{k,\ell}(t,y;z) &=  \sum_{j=0}^{\lceil k/2 \rceil -1} \varepsilon^j \rho_\eta(y-x_{\ell}(t;z))a_{{\ell},j}(t,y-x_{\ell}(t;z);z)  e^{i\phi_{\ell}(t,y-x_{\ell}(t;z);z)/\varepsilon}\ ,
\end{align*}
are given by 
\begin{align}\lbeq{PhaseWithZ}
\phi_{\ell}(t,y;z)&=\phi_{{\ell},0}(t;z) + y\cdot p_{\ell}(t;z) + y\cdot \frac12 M_{\ell}(t;z) y + \sum_{|\beta|=3}^{k+1}\frac{1}{\beta!} \phi_{{\ell},\beta}(t;z) y^\beta \ ,
\\
\lbeq{AmpWithZ}
  a_{{\ell},j}(t,y;z)  &= \sum_{|\beta|=0}^{k-2j-1} \frac{1}{\beta!}a_{{\ell},j,\beta}(t;z) y^\beta \ .
\end{align}
We remind the reader that each $v_{k,\ell}(t,y;z)$ requires initial values
for the ray and all of the amplitude and phase Taylor coefficients. The
appropriate choice of these initial values will make $u_k(0,y)$
asymptotically converge the initial conditions in \eq{generalHyperbolicEqIntro}. The
first step is to choose the origin of the rays and the initial coefficients of
$\phi_\ell$ up to order $k+1$. Letting the ray begin at a point $z$ and expanding
$\Phi(y)$ in a Taylor series about this point,
\begin{align*}
 \Phi(y)   &= \sum_{|\beta|=0}^{k+1} \frac1{\beta!}\Phi_\beta(z)(y-z)^\beta + \text{error} \ ,
\end{align*}
we initialize the ray and Gaussian beam phase associated with $\tau_{\ell}$ as
\begin{align*}
 x_{\ell}(0;z) &= z \ , &p_{\ell}(0;z) &= \nabla_y \Phi(z) \ , \\
 M_{\ell}(0;z) &= \partial_y^2\Phi(z) + i \ {\rm Id}_{n\times n} \ , & 
 \phi_{\ell,\beta}(0;z) &= \Phi_\beta(z) \quad |\beta|=0\ , \ \ |\beta|= 3,\ldots,k+1 \ .
\end{align*}
Determining the initial coefficients for the amplitudes involves a more
complicated procedure. As with the phase, we expand all of the amplitude
functions in Taylor series,
\begin{align*}
 A_{{\ell},j}(y)    &= \sum_{|\beta|=0}^{k-2j-1} \frac1{\beta!}A_{{\ell},j,\beta}(z)(y-z)^\beta + \text{error} \ .
\end{align*}
Next, we look at the time derivatives of $u_k$ at $t=0$, to the lowest order in
$\varepsilon$. We equate the coefficients of $(y-z)$ to the the corresponding
terms in the Taylor expansions of $B_{{\ell},j}$ and recalling that
\begin{align*}
\partial_t\phi_{\ell}(0,0;z)=\dot{\phi}_{\ell,0}(t;z)-\dot{x}_\ell(0;z)\cdot p_\ell(0;z)=-H_\ell(0,{x}_\ell(0;z),p_\ell(0;z))=:\tau_{\ell}\ , 
\end{align*}
we obtain the following $m\times
m$ system of linear equations:
\begin{align*}
\begin{bmatrix}
1 &\cdots& 1 \\
\vdots& & \vdots \\
(i\tau_0)^{\ell} &\cdots& (i\tau_{m-1})^{\ell}\\
\vdots& & \vdots \\
(i\tau_0)^{m-1} &\cdots& (i\tau_{m-1})^{m-1}
\end{bmatrix}
\begin{bmatrix}
a_{0,0,\beta}\\
\vdots \\
a_{{\ell},0,\beta} \\
\vdots\\
a_{m-1,0,\beta}
\end{bmatrix}
=
\begin{bmatrix}
A_{0,0,\beta}\\
\vdots \\
A_{{\ell},0,\beta} \\
\vdots\\
A_{m-1,0,\beta}
\end{bmatrix} \ .
\end{align*}
Since the $\tau_{\ell}$ are distinct, this Vandermonde matrix is invertible, so
the solution will give the initial coefficients for the first amplitude for each
of the $m$ Gaussian beams. If we proceed with the next orders in $\varepsilon$,
we would obtain the same $m\times m$ linear system for $[a_{0,j,\beta}, \ldots,
a_{m-1,j,\beta}]^{\sf T}$, except that the right hand side will not only depend
on the Taylor coefficients $B_{{\ell},j,\beta}$, but also on previously computed
$a_{{\ell},q,\gamma}$, $q<j$, coefficients and their time derivatives. Thus, all
of the necessary initial coefficients for each of the $m$ Gaussian beams can be
computed sequentially. Summarizing this construction, we have that at $t=0$ and
${\ell}=0,\ldots,m-1$,
\begin{align}\lbeq{hyperbolicInitDataGB}
\partial_t^{\ell} u_k(0,y) = \left(\frac{1}{2\pi\varepsilon}\right)^\frac{n}{2} \int_{K_0}\varepsilon^{-{\ell}}\sum_{j=0}^N \varepsilon^j \rho_\eta(y-z) b_{{\ell},j}(y)e^{i\phi(y)/\varepsilon} dz + \Ordo(\varepsilon^\infty)\ ,
\end{align}
where $\phi(y)$ is the Taylor expansion of $\Phi(y)+i|y-z|^2/2$ to order $k+1$
and each $b_{{\ell},j}$ is the same as the Taylor expansion of $B_{{\ell},j}$ up
to order $k-2j-1$. The $\Ordo(\varepsilon^\infty)$ term is present because some
of the time derivatives fall on the cutoff function $\rho_\eta(y-x_\ell(t;z))$.
The contributions of such terms decays exponentially as $\varepsilon\to 0$,
since the derivatives of $\rho_\eta(y-x_\ell(t;z))$ are compactly supported and
vanish near $y=z$.

\begin{remark}
For ease of notation and exposition, in \eq{generalHyperbolicEqIntro} we
have taken the phase $\Phi$ to be the same for all of the $m$ initial data,
however, this is not a requirement. We can form $m$ different Gaussian beam
superpositions that each satisfy one of these $m$ conditions with a specific
phase and the rest with zero initial data. Then, summing these $m$ solutions we
obtain a more general solution of \eq{generalHyperbolicEqIntro} with $m$ different
phase functions for each of initial data piece.
\end{remark}

\begin{remark}\lbrem{imagParamM}
In the initialization of $M_\ell(0;z)$ we take its imaginary part to
be given by $i\,{\rm Id}_{n\times n}$ for simplicity. All of the results in this
paper can be carried out if we instead took the imaginary part to be given by
$i\gamma\,{\rm Id}_{n\times n}$, for some constant $\gamma>0$, and adjusted the
normalization constant in \eq{superpositionForHyperbolic} appropriately.
However, it is important to note that the constants throughout this paper will
depend on $\gamma$ and that in general, we expect that increasing $\gamma$ will
increase the evolution error in the Gaussian beams and that decreasing $\gamma$
will increase the error in approximating the initial data. 
\end{remark}

This completes the construction of the Gaussian beam superpositions $u_k$ for
the initial value problem \eq{generalHyperbolicEqIntro} for general $m$-th order
strictly hyperbolic operators. From this point, we will assume that the
parameter $\eta$ is chosen as $\eta=\infty$ for the first order superposition
$u_1$ and that for higher order superpositions it is taken small enough (and
independent of $z$ and $\ell$) to make
$\Im\phi_\ell(t,y-x_\ell(t;z);z)>\delta|y-x_\ell(t;z)|^2$ for $t\in[0,T]$ and
$y\in\supp\{\rho_\eta(y-x_\ell(t;z))\}$. \lem{GBphaseOKforQ} below shows that
this is always possible.


\subsection{The Schr\"odinger Equation}\lbsec{SchrodingerEq}
The construction of Gaussian beams for hyperbolic equations can be extended to
the Schr\"odinger equation by replacing the operator $P$ with a semiclassical
operator $P^\varepsilon$. Then, we can similarly construct asymptotic solutions
to $P^\varepsilon u=0$ as $\varepsilon \to 0$. In this section, we briefly
review the construction presented in \cite{LiuRalston:10} for the Schr\"odinger
equation \eq{SchrodingerEquationIntro} with a smooth external potential $V(y)$.
Note that the small parameter $\varepsilon$ represents the fast space and time
scale introduced in the equation, as well as the typical wavelength of
oscillations of the initial data.

We recall that the $k$-th order Gaussian beam solutions are of the form 
\begin{align*}
    v_k(t,y;z) = \sum_{j=0}^{\lceil k/2 \rceil -1} \varepsilon^j  \rho_\eta(y-x(t;z)) a_j(t,y-x(t;z);z)e^{i\phi(t,y-x(t;z);z)/\varepsilon},
\end{align*}
where the phase and amplitudes are given in \eq{PhaseWithZ} and \eq{AmpWithZ}
and $\rho_\eta$ is the cutoff function \eq{cutoff}. 
Furthermore, the subindex ``$\ell$'' has been suppressed since for the
Schr\"odinger equation there is only one choice for $H(t,x,p)$, namely
\be{HSsystem}
  H(t,x,p) = \frac{|p|^2}{2}+V(x)\ .
\ee
The system \eq{Hsystem} for the bicharacteristics $(x(t;z), p(t;z))$ is
then given by
\begin{align*}
\dot x & = p \ , &  x(0;z)&=z \ , \\
\dot p &=-\nabla_y V, & p(0;z)&=\nabla_y \Phi(z) \ .
\end{align*}
The equations for the phase and amplitude Taylor coefficients are derived in the
same way as for the strictly hyperbolic equations. The phase coefficients along
the bicharacteristic curve satisfy
\begin{align*}
\dot \phi_0 &= \frac{|p|^2}{2} - V(x(t))\ ,  \\
\dot M &= -M^2 -  \partial_y^2 V(x(t)) \ , \\
\dot \phi_\beta &= - \sum_{|\gamma |=2}^{|\beta|} \frac{(\beta-1)!}{(\gamma-1)!(\beta-\gamma)!} \phi_\gamma \phi_{\beta-\gamma+2} - \partial^\beta_y V, \quad |\beta| = 3,\ldots,k+1 \ .
\end{align*}
The amplitude coefficients are obtained by recursively solving transport equations for $a_{j, \beta}$ with $|\beta|\leq k-2j-1$, starting from 
$$
\dot a_{0, 0}=-\frac{1}{2}a_{0, 0}\mbox{Tr}(M(t; z)).
$$
These equations when equipped with the following initial data 
\begin{align*}
\phi_0(0;z)       &=\Phi_0(z)\ , &
M(0;z)            &=\partial_y^2\Phi(z)+i\ {\rm Id}_{n\times n}\ , \\
\phi_\beta(0;z)   &= \Phi_\beta(z), \quad |\beta|=3,\ldots,k+1 \ , &
a_{j, \beta}(0;z) &= A_{j, \beta}(z) \ ,
\end{align*}
have global in time solution, thus $v_k(t,y; z)$ is well-defined for all $0\leq t \leq T$. 

The $k$-th order Gaussian beam superposition is finally formed as
\begin{align*}
    u_k(t,y)=\left(\frac{1}{2\pi\varepsilon}\right)^{\frac{n}{2}} \int_{K_0} v_k(t,y;z)dz \ .
\end{align*}
As in the case of strictly hyperbolic PDEs, we will assume that the cutoff
parameter $\eta$ is chosen as $\eta=\infty$ for the first order superposition
$u_1$ and that for higher order superpositions it is taken small enough (and
independent of $z$) to make $\Im\phi(t,y-x(t;z);z)>\delta|y-x(t;z)|^2$ for
$t\in[0,T]$ and $y\in\supp\{\rho_\eta(y-x(t;z))\}$. Again, \lem{GBphaseOKforQ}
ensures that this can be done. Furthermore, we note that \rem{imagParamM}
concerning the initial choice of the imaginary part of $M(0;z)$ also applies to
the superposition for the Schr\"odinger equation.

\section{Error Estimates for Gaussian Beams}\lbsec{ErrorEstForGB}

In this section we prove the asymptotic convergence results for superpositions
of Gaussian beams given in our main result, \theo{ErrorEstWaveSchrod}. The
corner stone of our error estimates are the well-posedness estimates for each
PDE. Since they are crucial to our analysis we summarize them here.
\begin{theorem}\lbtheo{wellPosednessAll} The generic well-posedness estimate
\begin{align}\lbeq{WellPosedAll}
    \|u(t,\cdot)\|_{S} \leq \|u(0,\cdot)\|_{S} + C\varepsilon^q \int_{0}^{t} \|\Theta[u](\tau, \cdot)\|_{L^2}d\tau \ ,
\end{align}
applies to 
\begin{itemize}
\item the $m$-th order strictly hyperbolic PDE \eq{generalHyperbolicEqIntro} with $\Theta=P$, $q=0$ and $\|\cdot\|_S$ the Sobolev space-time norm,
\begin{align*}
\sum_{\ell=0}^{m-1}\left\|\partial^{\ell}_t u(t,\cdot)\right\|_{H^{m-\ell-1}}\ ,
\end{align*} 
where $H^s$ is the Sobolev $s$-norm ($H^0=L^2$),
\item the wave equation \eq{WaveEquationIntro} with $\Theta=\partial^2_t-c(y)^2\Delta$, $q=1$ and $\|\cdot\|_S$ the $\varepsilon$-scaled energy norm,
\begin{align}\lbeq{ENorm}
   ||u(t,\cdot)||_E := \left(\frac{\varepsilon^2}{2}\int_{\Real^n} \frac{|u_t|^2}{c(y)^2}+ |\nabla u|^2 dy\right)^{1/2}\ ,
\end{align}
\item and the Schr\"odinger equation \eq{SchrodingerEquationIntro} with $\Theta=P^\varepsilon$, $q=-1$ and $\|\cdot\|_S$ the standard $L^2$ norm.
\end{itemize}
\end{theorem}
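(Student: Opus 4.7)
My plan is to prove the three estimates separately via energy methods tailored to each equation, deriving in each case a differential inequality for $\|u(t,\cdot)\|_S$ that integrates to the stated bound. The wave and Schr\"odinger cases admit short direct computations, whereas the hyperbolic case requires a standard reduction to a first-order (pseudo)differential system.

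For the acoustic wave equation I will differentiate $\|u(t,\cdot)\|_E^2$ in time and integrate by parts on the gradient term. The $u_{tt}$ and $\Delta u$ contributions combine into exactly $\Theta u = u_{tt}-c(y)^2\Delta u$, producing
$$
\frac{d}{dt}\|u\|_E^2 \;=\; \varepsilon^2 \int \frac{u_t}{c(y)^2}\,\Theta u\,dy.
$$
Cauchy--Schwarz, together with the elementary bound $\|u_t/c\|_{L^2}\le(\sqrt2/\varepsilon)\|u\|_E$ that follows from the definition \eq{ENorm} and the assumption that $c$ is bounded above and below, yields $\frac{d}{dt}\|u\|_E\le C\varepsilon\|\Theta u\|_{L^2}$; integrating in time gives the bound with $q=1$.

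For the Schr\"odinger equation I will solve for $u_t$ from \eq{SchrodingerEquationIntro} and compute $\frac{d}{dt}\|u\|_{L^2}^2 = 2\Re\int \bar u\,u_t\,dy$. The self-adjoint part $\tfrac{\varepsilon^2}{2}\Delta - V$ contributes a real quantity, which becomes purely imaginary after multiplication by $i/\varepsilon$ and therefore drops out under $\Re$. What survives is the identity $\frac{d}{dt}\|u\|_{L^2}^2 = -(2/\varepsilon)\Im\int\bar u\,\Theta u\,dy$, hence $\frac{d}{dt}\|u\|_{L^2}\le (1/\varepsilon)\|\Theta u\|_{L^2}$, yielding the bound with $q=-1$ upon integrating.

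The main obstacle is the $m$-th order strictly hyperbolic case, since the norm $\|u\|_S=\sum_{\ell=0}^{m-1}\|\partial_t^\ell u\|_{H^{m-\ell-1}}$ mixes spatial and temporal derivatives and has no one-line energy identity. My approach is to reduce $Pu = f$ to a first-order pseudodifferential system for the vector
$$
 U = (\langle D_y\rangle^{m-1}u,\,\langle D_y\rangle^{m-2}\partial_t u,\,\ldots,\,\partial_t^{m-1}u)^\top,
$$
so that $\partial_t U = A(t,y,D_y)U + F$ with $\|U\|_{L^2}\sim\|u\|_S$ and $\|F\|_{L^2}\lesssim\|Pu\|_{L^2}$. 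The matrix symbol $A$ has principal part of order one, and strict hyperbolicity via \ref{HypAsmptRoots}--\ref{HypAsmptUnifSimple} guarantees that its principal symbol has uniformly simple real eigenvalues for $|\xi|\ne 0$, which permits the construction of a smooth positive definite matrix symbol $R(t,y,\xi)$ (a Kreiss symmetrizer) such that $\Re(RA)$ is bounded on $L^2$. The standard energy identity for $\langle R(t,y,D_y)U,U\rangle$ then produces a Gronwall inequality whose exponential factor over the fixed interval $[0,T]$ can be absorbed into the constant $C$, yielding the stated bound with $q=0$. Rather than reproduce this classical construction, I would simply cite~\cite{HormanderIII}.
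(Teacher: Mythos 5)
Your proposal is correct and takes essentially the same route as the paper: the paper's entire proof consists of citing standard references (textbook energy estimates for the wave and Schr\"odinger equations, and Lemma~23.2.1 in Section~23.2 of H\"ormander, Vol.~III, for the $m$-th order case), and your direct computations for the wave equation (differentiating $\|u\|_E^2$, integrating by parts, Cauchy--Schwarz with $\|u_t/c\|_{L^2}\le(\sqrt2/\varepsilon)\|u\|_E$) and for the Schr\"odinger equation (the self-adjoint part dropping out under $\Re$) are precisely the standard arguments behind those citations, while for the strictly hyperbolic case you sketch and then defer to the same H\"ormander result. One cosmetic remark: the first-order-system reduction with symmetrizer and Gronwall yields a constant (e.g.\ $e^{CT}$) multiplying the initial-data term $\|u(0,\cdot)\|_S$ as well, not coefficient exactly $1$ as in \eq{WellPosedAll}, but the same is true of the cited Lemma~23.2.1, and the paper only ever uses the estimate with a factor $C(T)$ in front of both terms, so this does not affect anything.
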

\begin{proof}
 The results for the wave and Schr\"odinger equation are standard and can be found in most books on PDEs. The result for $m$-th order equations is a bit more technical to prove and appears in Section~23.2 of \cite{HormanderIII} (Lemma~23.2.1). 
\end{proof}

\begin{remark}
Since the wave equation is a second order strictly hyperbolic PDE, we have two
distinct well-posedness estimates in terms of two different norms. Furthermore,
we note that $\|\cdot\|_E$ is only a norm over the class of functions that
tend to zero at infinity, which we are considering here.
\end{remark}

When \theo{wellPosednessAll} is applied to the difference between the Gaussian
beam superposition, $u_k$, and the true solution, $u$, for any one of the PDEs
that we are considering, we obtain the following estimate for $t\in[0, T]$,
\begin{align}\lbeq{wellPosedGBError}
    \|u_k(t,\cdot)-u(t,\cdot)\|_{S} \leq \|u_k(0,\cdot) - u(0,\cdot)\|_{S} + C\varepsilon^q \int_{0}^{t} \|\Theta[u_k](\tau, \cdot)\|_{L^2}d\tau \ ,
\end{align}
with the appropriate choices for $\Theta$, $q$ and $\|\cdot\|_S$. We will refer
to the first term on the right hand side as the error in approximating the
initial data or the initial data error and to the second term as the evolution
error.

Using the ideas in \cite{Tanushev:08}, we prove \theo{InitialData}, which
shows the convergence rate in $\varepsilon$ of the Gaussian beam superposition
to the initial data for any given Sobolev norm. Thus, this theorem extends a
result of \cite{Tanushev:08}, so that we can use it to estimate the initial data
error in the more general well-posedness estimates above. 

The evolution error has been estimated in the work of previous authors
\cite{LiuRalston:09,LiuRalston:10,BougachaEtal:09,RousseSwart:09}. The necessary
steps used by those authors are quite general and can be applied to any strictly
hyperbolic equation as well as any linear dispersive wave equation as long as it
is semi-classically rescaled. Following these ideas, we show in
\lem{PuInTermsOfQ} that for all of the PDEs that we consider, $\Theta[u_k]$ can
be written in the form
\begin{align}\lbeq{PinTermsOfQ}
  \Theta[u_k] = \varepsilon^{k/2-q}\sum_{j=1}^J \varepsilon^{r_j} (\Op_{\alpha_j,g_j,\eta} f_j)(t,y)
  +\Ordo(\varepsilon^\infty) \ ,\qquad
  r_j\geq 0,\quad f_j\in L^2(K_0),
\end{align}
so that the key to estimating the evolution error is
precise norm estimates in terms of $\varepsilon$ of the oscillatory integral
operators $\Op_{\alpha_j,g_j,\eta}:L^2(K_0)\mapsto
L^\infty([0,T]; L^2(\Real^n))$, defined as follows. For a fixed $t\in[0,T]$, a
multi-index $\alpha$, a compact set $K_0\subset\Real^n$, a cutoff function
$\rho_\eta$ \eq{cutoff} with cutoff radius $0<\eta\leq\infty$ and a function
$g(t,y;z)$, we let
\begin{align}\lbeq{Operator}
   & (\Op_{\alpha,g,\eta} w)(t,y) \\ & \qquad := \varepsilon^{-\frac{n+|\alpha|}{2}}\int_{K_0} w(z)g(t,y;z)(y-x(t;z))^\alpha e^{i\phi(t,y-x(t;z);z)/\varepsilon} \rho_\eta(y-x(t;z))dz \ , \notag
\end{align}
with the functions $g(t,y;z)$, $\phi(t,y;z)$ and $x(t;z)$ satisfying for all $t\in[0,T]$,
\begin{enumerate}
\renewcommand{\theenumi}{(A\arabic{enumi})}
\renewcommand{\labelenumi}{(A\arabic{enumi})}
\item $x(t;z)\in C^\infty([0,T]\times K_0)$,
\item $\phi(t,y;z), g(t,y;z) \in C^\infty([0,T]\times \Real^n\times K_0)$, 
\item $\nabla\phi(t,0;z)$ is real and there is a constant $C$ such that for all $z,z'\in K_0$,
\begin{align*}
  |\nabla_y\phi(t,0;z)-\nabla_y\phi(t,0;z')|+|x(t;z)-x(t;z')|\geq C|z-z'|\ ,
\end{align*}
\item for $|y|\leq 2\eta$ (or for all $y$ if $\eta=\infty$), there exists a constant $\delta$ such that for all $z\in K_0$,
\begin{align*}
  \Im\phi(t,y;z)\geq \delta|y|^2\ ,
\end{align*}
\item for any multi-index $\beta$, there exists a constant $C_\beta$, such that
\begin{align*}
   \mathop{\sup_{z\in K_0}}_{y\in\Real^n} \left|\partial_y^\beta g(t,y;z)\right|\leq C_\beta\ .
\end{align*}
\end{enumerate}
With this definition, the following norm estimate of $\Op_{\alpha,g,\eta}$ will be proved in \sect{NormEstQ}.
\begin{theorem}\lbtheo{QL2norm}
Under the assumptions (A1)--(A5),
\begin{align*}
   \sup_{t\in[0,T]}\left\|\Op_{\alpha,g,\eta}\right\|_{L^2} \leq C(T).
\end{align*}
\end{theorem}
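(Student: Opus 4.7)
The approach is a $T^*T$ argument followed by Schur's test. I would form $\Op^*_{\alpha,g,\eta}\Op_{\alpha,g,\eta}$ on $L^2(K_0)$ and, by Fubini, write its integral kernel as
\begin{align*}
K(z,z') = \varepsilon^{-(n+|\alpha|)}\!\int_{\Real^n}\! G(t,y;z,z')\,e^{i\Psi(t,y;z,z')/\varepsilon}\,dy,
\end{align*}
where $\Psi(t,y;z,z') = \phi(t,y-x(t;z);z) - \overline{\phi(t,y-x(t;z');z')}$ and the amplitude $G$ collects the functions $g$, the monomial $(y-x(t;z))^\alpha\overline{(y-x(t;z'))^\alpha}$, and the two cutoffs. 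The theorem then reduces to producing a pointwise bound on $K$ strong enough to close Schur's test, uniformly in $z,z' \in K_0$ and in $t \in [0,T]$.

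The kernel is controlled via two separate decay mechanisms in the $y$-integral. First, assumption (A4) gives $\Im\Psi \geq \delta(|y-x(t;z)|^2+|y-x(t;z')|^2)$, and the parallelogram identity $|y-x|^2+|y-x'|^2 = 2|y-m|^2+\tfrac12|x-x'|^2$ (with $m=(x(t;z)+x(t;z'))/2$) produces both a direct Gaussian decay factor $e^{-c|x(t;z)-x(t;z')|^2/\varepsilon}$ and a Gaussian weight $e^{-c|y-m|^2/\varepsilon}$ inside the integral. Second, since \eq{PhaseWithZ} gives $\phi(t,y;z) = \phi_0(t;z) + y\cdot p(t;z)+\cdots$, the real part of $\Psi$ has linear-in-$y$ coefficient $p(t;z)-p(t;z')$ (with $p(t;z)=\nabla_y\phi(t,0;z)$); integrating this oscillation against the Gaussian weight --- either via the explicit Gaussian integral formula or via iterated integration by parts in $y$ --- yields an additional decay factor $e^{-c|p(t;z)-p(t;z')|^2/\varepsilon}$. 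The monomial $(y-x)^\alpha\overline{(y-x')^\alpha}$ is absorbed, contributing at most $\varepsilon^{|\alpha|}$ by Gaussian moment bounds, and together with the Gaussian volume factor $\varepsilon^{n/2}$ this exactly balances the prefactor $\varepsilon^{-(n+|\alpha|)}$, giving
\begin{align*}
|K(z,z')| \leq C\,\varepsilon^{-n/2}\,\exp\!\Bigl(-c\,\frac{|x(t;z)-x(t;z')|^2 + |p(t;z)-p(t;z')|^2}{\varepsilon}\Bigr).
\end{align*}

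The non-squeezing hypothesis (A3), combined with $a^2+b^2 \geq \tfrac12(a+b)^2$, then forces the exponent to dominate $c'|z-z'|^2/\varepsilon$. A change of variables now gives
\begin{align*}
\sup_{z\in K_0}\int_{K_0}|K(z,z')|\,dz' \leq C\,\varepsilon^{-n/2}\cdot C'\varepsilon^{n/2} = C(T),
\end{align*}
and symmetrically in the other argument. Schur's test then yields $\|\Op^*\Op\|_{L^2(K_0)\to L^2(K_0)} \leq C(T)$, and hence the claimed bound on $\|\Op_{\alpha,g,\eta}\|$; uniformity in $t$ comes from continuous dependence of all constants on $t\in[0,T]$ and compactness.

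The main technical obstacle will be the rigorous execution of the $|p-p'|$-decay step in the presence of the cubic and higher Taylor remainders in $\phi$, whose derivatives appear when integrating by parts in $y$. The way around this is to take the cutoff radius $\eta$ sufficiently small, as guaranteed by \lem{GBphaseOKforQ}: on $\supp\rho_\eta$ the quadratic imaginary part $\delta|y-x|^2$ dominates the non-quadratic remainder, so that derivatives falling on the nonlinear phase terms produce only controlled polynomial factors in $y-x$ that are reabsorbed by the Gaussian weight and preserve the decay structure above.
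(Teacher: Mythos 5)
Your architecture coincides with the paper's: a $T^*T$ reduction, Schur's test on the kernel, the quadratic lower bound (A4) plus the parallelogram identity to extract the factor $e^{-\delta|x(t;z)-x(t;z')|^2/\varepsilon}$, and the non-squeezing bound (A3) to convert phase-space separation into $|z-z'|$ separation. The genuine gap is your claimed second decay factor $e^{-c|p(t;z)-p(t;z')|^2/\varepsilon}$. That factor is obtainable from the explicit Gaussian integral formula only when the phase is exactly quadratic, i.e.\ for first-order beams ($k=1$, $\eta=\infty$); for $k\geq 2$ the phase \eq{PhaseWithZ} contains real cubic through $(k+1)$-st order terms, the exact formula is unavailable, and iterated integration by parts can never produce Gaussian decay: non-stationary phase gives, for each fixed $K$, a bound of size $C_K\bigl(\sqrt{\varepsilon}/\inf_y|\nabla_y\Psi|\bigr)^{K}$ with constants $C_K$ growing in $K$, so only finite-order polynomial decay is available. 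Your closing remark that the higher-order remainders ``preserve the decay structure above'' is therefore false as stated. Moreover, even granting integration by parts, the needed lower bound on $|\nabla_y\Psi|$ is not simply $|p(t;z)-p(t;z')|$: the symmetrized gradient contains a Hessian-difference term of size $C\mu|z-z'|$ (from $\partial_y^2\phi(\cdot\,;z')-\partial_y^2\phi(\cdot\,;z)$ acting on $y$) and terms of size $C|x(t;z)-x(t;z')|$, so a usable bound $|\nabla_y\Psi|\geq c|z-z'|$ holds only after restricting to the set where $|x(t;z)-x(t;z')|\leq\theta|z-z'|$ with both $\theta$ and the $y$-support radius $\mu<\eta$ small --- this is the second bullet of \lem{PhaseEst}, and it is where (A3) actually enters, not through the exponent manipulation $a^2+b^2\geq\tfrac12(a+b)^2$ that you propose.

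The repair is exactly what the paper does, and it shows the Gaussian overclaim is unnecessary: split $K_0\times K_0$ into $D_1=\{|x-x'|\geq\theta|z-z'|\}$, where the (A4) Gaussian factor alone already gives $e^{-c\theta^2|z-z'|^2/\varepsilon}$, and $D_2=\{|x-x'|<\theta|z-z'|\}$, where \lem{PhaseEst} combined with the non-stationary phase lemma (\lem{statphase}, packaged as \lem{Ilemma}) yields the kernel bound $C\varepsilon^{n/2+|\alpha|}\bigl(1+(C|z-z'|/\sqrt{\varepsilon})^{K}\bigr)^{-1}$, plus an $\Ordo(\varepsilon^{s})$ tail from the annulus $\mu\leq|y\pm\Delta x|\leq 2\eta$ that your plan also does not account for. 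Choosing $K=n+1$ makes $\int_0^R\bigl(1+(s/\sqrt{\varepsilon})^{K}\bigr)^{-1}s^{n-1}\,ds\leq C\varepsilon^{n/2}$, which closes Schur's test just as your Gaussian bound would. Your bookkeeping for the monomial $(y-x(t;z))^\alpha(y-x(t;z'))^\alpha$ contributing $\varepsilon^{|\alpha|}$, and the final balance against the prefactor $\varepsilon^{-n-|\alpha|}$, match the paper. In summary: correct skeleton and a correct $k=1$ case, but the $|p-p'|$-decay step as stated fails for every higher-order beam, and the required fix --- finite-order non-stationary phase applied on the $D_1$/$D_2$ dichotomy --- is an idea your plan is missing.
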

This theorem improves the norm estimate of $\Op_{\alpha,g,\eta}$ given in
\cite{LiuRalston:09,LiuRalston:10}, which has an additional factor
$\varepsilon^{-\gamma}$ in the right hand side, with $\gamma=(n-1)/4$ for the
wave equation and $\gamma=n/4$ for the Schr\"odinger equation. Instead of
estimating the integral directly, we follow the arguments in
\cite{BougachaEtal:09,RousseSwart:07} to relate the estimate of the oscillatory
integral to the operator norm, through the use of an adjoint operator. An
essential ingredient in estimating the operator norm is the non-squeezing lemma
(\lem{nonsqueezing}), which states that the distance between two physical points
is comparable to the distance between their Hamiltonian trajectories measured in
phase space, even in the presence of caustics. Using \theo{QL2norm} we are able
to prove the same convergence rate for Gaussian beam superpositions over
physical space that is achieved in \cite{BougachaEtal:09} for beam superposition
carried out in full phase space. Thus, we improve on the error estimates given in
\cite{LiuRalston:09,LiuRalston:10} to obtain error estimates for the Gaussian
beam superposition that are independent of dimension as given in
\theo{ErrorEstWaveSchrod}.

We conclude this section with two remarks.

\begin{remark}
The assumption of $C^\infty$ smoothness for all functions is made for simplicity
to avoid a too technical discussion about precise regularity requirements. In
this sense, \theo{ErrorEstWaveSchrod} and \theo{QL2norm} can be sharpened, since
they will be true also for less regular functions.
\end{remark}

\begin{remark}
If the condition in assumption (A4) is satisfied for all $y$ there is no need
for the cutoff function in the definition of the operator $\Op$ in
\eq{Operator}. We treat this case by taking $\eta=\infty$ and defining
$\rho_\infty\equiv 1$. The operators with $\eta=\infty$ are used in the case of
first order Gaussian beams.
\end{remark}

\subsection{Gaussian Beam Phase}

In this section, we show that the Gaussian beam phase $\phi$ given in
\eq{PhaseWithZ} is an admissible phase for the operators $\Op_{\alpha,g,\eta}$.
We begin with a lemma based on the regularity of the Hamiltonian flow map,
$S_t$, stating that the difference $|z-z'|$ is comparable to the {\em sum}
$|p(t;z)-p(t;z')| +|x(t;z)-x(t;z')|$. Note, however, that because of caustics
it is not true that
$|z-z'|$ is related in this way to either of the individual terms
$|p(t;z)-p(t;z')|$ or $|x(t;z)-x(t;z')|$. 
\begin{lemma}[Non-squeezing lemma]\lblem{nonsqueezing} 
Let $S_t$ be a Hamiltonian flow map, $$(x(t;z),p(t;z))=S_t(x(0;z),p(0;z)),$$
associated to a strictly hyperbolic PDO \eq{Hsystem} or to the Schr\"odinger
operator \eq{HSsystem}. Let $K_0$ be a compact subset of $\Real^n$ and assume
that $p(0;z)$ is Lipschitz continuous in $z \in K_0$ for the flow associated
with the Schr\"odinger operator. Additionally, assume that $\inf_{z\in
K_0}|p(0;z)|= \delta>0$ for the flow associated with the strictly hyperbolic
PDO. Under these conditions, there exist positive constants $c_1$ and $c_2$
depending on $T$ and $\delta$, such that
\begin{align}\lbeq{NonSqueezeIneq}
   c_1 |z-z'| \leq |p(t;z)-p(t;z')|+|x(t;z)-x(t;z')|\leq c_2 |z-z'| \ ,
\end{align}
for all $z,z'\in K_0$ and $t\in[0,T]$. 
\end{lemma}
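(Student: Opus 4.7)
The plan is to prove both inequalities by standard Gronwall-type ODE estimates applied to the Hamiltonian system, exploiting the fact that the Hamiltonian flow map $S_t$ is a smooth diffeomorphism on phase space so that the backward flow $S_{-t} = S_t^{-1}$ satisfies analogous regularity estimates.

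First I would establish the upper bound. Introduce the phase-space difference $\xi(t) := (x(t;z) - x(t;z'),\; p(t;z) - p(t;z'))$ and differentiate using \eq{Hsystem} (or its Schrödinger analogue). Writing the two Hamiltonian ODEs' right-hand sides as a map $F(t,x,p)$, one obtains
\begin{align*}
 |\dot\xi(t)| \leq L \,|\xi(t)|,
\end{align*}
where $L$ is the supremum of the norm of the Jacobian of $F$ over the region of phase space traversed by the flows starting from $K_0$ on $[0,T]$. For the Schrödinger case this bound is immediate, since $H(x,p) = |p|^2/2 + V(x)$ and all derivatives of $V$ are bounded. Gronwall then yields $|\xi(t)| \leq e^{LT}|\xi(0)|$, and since $x(0;z) = z$ while $p(0;z) = \nabla\Phi(z)$ (or Lipschitz in $z$ by hypothesis), we have $|\xi(0)| \leq C|z-z'|$, which gives the right-hand inequality in \eq{NonSqueezeIneq}.

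Second, for the lower bound I would run the same argument backwards: the Hamiltonian flow is a symplectomorphism, so $S_{-t}$ is a smooth flow in its own right, generated by the Hamiltonian $-H$. Applying the Gronwall estimate above to the backward flow started at time $t$ and ending at time $0$ yields
\begin{align*}
 |x(0;z) - x(0;z')| + |p(0;z) - p(0;z')| \leq C' \bigl(|x(t;z) - x(t;z')| + |p(t;z) - p(t;z')|\bigr).
\end{align*}
Since $x(0;z) = z$, the left-hand side dominates $|z - z'|$, giving the left-hand inequality in \eq{NonSqueezeIneq} with $c_1 = 1/C'$. Importantly, caustics, which correspond to the projection $\pi_x \circ S_t$ becoming singular, do not affect this estimate because we keep both $x$ and $p$ components throughout.

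The main obstacle is verifying the uniform bound $L$ on the Jacobian of the Hamiltonian vector field in the strictly hyperbolic case, since each $H_\ell(t,x,p)$ is only defined implicitly via $\sigma_m(t,x,-H_\ell,p) = 0$ and becomes singular as $p \to 0$. To handle this, I would combine three ingredients: the hypothesis $\inf_{z\in K_0}|p(0;z)| \geq \delta > 0$ with point 3 of \lem{HyperbolicHamiltonianLemma} to secure the uniform lower bound $|p(t;z)| \geq \delta e^{-\lambda T}$ along the flow; the homogeneity of $\sigma_m$ (and hence of $H_\ell$) of degree one in $p$ together with a further Gronwall argument to bound $|p(t;z)|$ from above; and implicit differentiation of $\sigma_m(t,x,-H_\ell,p) = 0$, which expresses the derivatives of $H_\ell$ as ratios with $\partial_\tau \sigma_m$ in the denominator, whose magnitude is bounded below by $c_0|p|^{m-1}$ per assumption \ref{HypAsmptUnifSimple}. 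On the resulting compact region of phase space, $H_\ell$ is smooth with bounded derivatives, and the Gronwall argument closes.
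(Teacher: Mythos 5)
Your proposal is correct in substance and follows the same skeleton as the paper's proof: a forward Lipschitz estimate for the flow map (you via Gronwall on the difference of two trajectories, the paper via the fundamental theorem of calculus applied to the Jacobian of $S_t$ along the straight segment joining $(z,p_0)$ and $(z',p_0')$), the inverse flow $S_{-t}$ for the reverse inequality, and, in the hyperbolic case, exactly the right three ingredients: the lower bound $|p(t;z)|\geq \delta e^{-\lambda T}$ from point 3 of \lem{HyperbolicHamiltonianLemma}, homogeneity of $\sigma_m$ to confine the flow to a compact annular region in $p$, and implicit differentiation with the uniform-simplicity bound $|\partial_\tau\sigma_m|\geq c_0|p|^{m-1}$ to control the derivatives of $H_\ell$ there. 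The two formulations are equivalent in spirit, since the standard proof that $S_t$ is Lipschitz is itself a Gronwall argument.

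There is one step you gloss over, and it is precisely the point where the paper expends its effort. Your compact region of regularity is an annulus $\{\rho\leq |p|\leq R\}$ in the $p$-variable, which is \emph{not convex}: the straight segment between $p(t;z)$ and $p(t;z')$ may cross $p=0$ (the two momenta can be nearly antipodal), so "smooth with bounded derivatives on the compact region" does not by itself yield the Lipschitz bound $|F(t,X)-F(t,X')|\leq L\,|X-X'|$ needed at each time in the Gronwall step. The fix is a dichotomy: if $|p(t;z)-p(t;z')|\leq\rho/2$ the segment stays in the enlarged annulus $\{|p|\geq\rho/2\}$ and the mean value theorem applies; otherwise $|F(t,X)-F(t,X')|\leq 2\sup|F|\leq \bigl(4\sup|F|/\rho\bigr)\,|p(t;z)-p(t;z')|$, using boundedness of the vector field on the compact region. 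This mirrors exactly the paper's case analysis, where either the initial segment $\tilde p(s)=(1-s)p_0+sp_0'$ avoids the ball $B_{\delta/2}$, or one deduces $|p_0-p_0'|\geq\delta/2$ and bounds $\|X-X'\|_1\leq 2d$ by the diameter $d$ of the flow image. With that one addition your argument closes; note also that the upper bound in \eq{NonSqueezeIneq} for the hyperbolic case still needs $|p_0-p_0'|\leq C|z-z'|$, which you, like the paper, obtain from the (implicit) Lipschitz dependence of $p(0;z)=\nabla\Phi(z)$ on $z$.
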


\begin{proof} 
We prove the result for the flows associated with the two types of operators
separately, however, we use the common notation,
\begin{align*}
 Z&=(z,p_0)=(z,p(0;z))\ , &Z'&=(z',p_0')=(z',p(0;z'))\ , \\
X&=S_t(Z)=(x,p)=(x(t;z),p(t;z))\ , &X'&=S_t(Z')=(x',p')=(x(t;z'),p(t;z')) \ .
\end{align*}
We begin with the flow associated with the Hamiltonian for the Schr\"odinger operator. Let us introduce the set $\Kcal_0 = \{ (z,p(0;z)):z\in K_0\}$ and note that $S_t$ is invertible with inverse $S_{-t}$ and regular for all $t$ so that
\begin{align*}
   \sup_{t\in[0,T]}\sup_{\tilde{Z}\in \conv({\Kcal}_0)} \left\|\frac{\partial S_t(\tilde{Z})}{\partial Z}\right\|\leq C\ , \qquad\qquad
  \sup_{t\in[0,T]}\sup_{\tilde{X}\in \conv(S_t({\Kcal}_0))} \left\|\frac{\partial S_{-t}(\tilde{X})}{\partial X}\right\|\leq C\ ,
\end{align*}
 where $\conv(E)$ denotes the convex hull of the set $E$. Now, noting that
\begin{equation}\lbeq{FlowBound}
   X-X' = \int_0^1\frac{d}{ds} S_t(sZ+(1-s)Z')ds = 
   \int_0^1\frac{\partial S_t(sZ+(1-s)Z')}{\partial Z}(Z-Z')ds \ , 
\end{equation}
and taking $\ell_1$ norms, since $sZ+(1-s)Z'\in \conv(K_0)$,
we have
$$
   |x-x'|+|p-p'|=||X-X'||_1 \leq C||Z-Z'||_1= C(|z-z'|+|p_0-p_0'|)\leq C' |z-z'|\ ,
$$
where we have used the Lipschitz continuity of $p_0$ in $z$. This gives the right half of \eq{NonSqueezeIneq}. By the equivalent of \eq{FlowBound} for $S_{-t}$, we have
\begin{align*}
  |z-z'|\leq |z-z'|+|p_0-p_0'| = \|Z-Z'\|_1 \leq C\|X-X'\|_1=C(|x-x'|+|p-p'|) \ ,
\end{align*}
which completes the proof of the lemma for the Hamiltonian flow associated with the Schr\"odinger operator.

For the flow associated with the strictly hyperbolic operator, we follow the same idea, but we have to be careful near $|p|=0$. Thus, in addition to $\Kcal_0$, we introduce the sets
 \begin{align*}
 B_\delta &= \left\{ (z,p)\ :\ z\in K_0,\ |p| < \delta\right\},\\
 \tilde{\Kcal}_0 &= {\rm conv}\left(\Kcal_0 \cup B_\delta \right) \ .
 \end{align*}
 Note that $S_t$ is regular away from $|p_0|=0$ by \lem{HyperbolicHamiltonianLemma} as $|p(t)|\neq 0$,  for all $t$ so
\begin{align*}
   \sup_{t\in[0,T]}\sup_{\tilde{Z}\in \tilde{\Kcal}_0\setminus B_{\delta/2}} \left\|\frac{\partial S_t(\tilde{Z})}{\partial Z}\right\|\leq C.
\end{align*}
Thus, again by \eq{FlowBound} we obtain,
\be{xpbound}
   |x-x'|+|p-p'|=||X-X'||_1 \leq C||Z-Z'||_1= C(|z-z'|+|p_0-p_0'|)\leq C'|z-z'| \ ,
\ee
provided that $\tilde{p}(s)=(1-s)p_0+sp_0'$ satisfies $\inf_{0\leq s\leq
1}|\tilde{p}(s)|\geq \delta/2$, which guarantees that
$sZ+(1-s)Z'\in\tilde{\Kcal}_0\setminus B_{\delta/2}$ for $0\leq s\leq 1$. 
On the other hand, suppose that $\inf_{0\leq s\leq 1} |\tilde p(s)|< \delta/2$.
We define $p^*$ as the point on the line connecting $p_0$ to $p_0'$ with
smallest norm and let $s^* = \argmin_{0 \leq s \leq 1}|\tilde p(s)|$ so that 
$p^*=\tilde{p}(s^*)$. Then
\begin{align*}
\frac{\delta}{2}>|p^*|=|p_0-s^*(p_0-p_0')| \geq |p_0|-s^*|p_0-p_0'|\geq \delta -|p_0-p_0'|.
\end{align*}
Hence, $|p_0-p_0'| \geq \delta/2$. Now, let
\begin{align*}
d = \sup_{ t \in [0, T]}\sup_{Z\in \Kcal_0}||S_t(Z)||_1 < \infty,
\end{align*}
so that  
\begin{align*}
\|X-X'\|_1 & \leq \|X\|_1+\|X'\|_1\leq 2d \leq  2d \frac{2}{\delta}|p_0-p_0'| \leq C\|Z-Z'\|_1,
\end{align*}
which shows that \eq{xpbound} holds for all $Z,\ Z'\in\Kcal_0$. This gives the
right half of \eq{NonSqueezeIneq}. For the left half of \eq{NonSqueezeIneq}, we note that by
\lem{HyperbolicHamiltonianLemma}, $\inf_{t\in[0,T],z\in K_0} |p(t;z)|\geq
\tilde{\delta}>0$, so that we can consider the inverse map $S_{-t}$ in exactly the
same way as $S_t$ above and show that $||Z-Z'||_1\leq C||X-X'||_1$. Thus we
obtain the left half of \eq{NonSqueezeIneq},
\begin{align*}
   |z-z'|\leq |z-z'|+|p_0-p_0'|= ||Z-Z'||_1 \leq C||X-X'||_1\leq C(|x-x'|+|p-p'|)\ .
\end{align*}
Thus, the proof of the lemma is complete.
\end{proof}

We are now ready to show that the phase function for a $k$-th order Gaussian beam is an admissible phase for the operators $\Op_{\alpha,g,\eta}$.

\begin{lemma}\lblem{GBphaseOKforQ}
Let $\phi(t,y;z)$ and $x(t;z)$ be the phase and central ray associated to a
$k$-th order Gaussian beam for \eq{generalHyperbolicEqIntro},
\eq{WaveEquationIntro} or \eq{SchrodingerEquationIntro} as given in
\sect{SuperpositionsOfGB} and \sect{SchrodingerEq}. The rays $x(t;z)$ satisfy
(A1) and $\phi(t,y;z)$ satisfies assumptions (A2) through (A4) if $\eta$ is
sufficiently small. In the case of $k=1$, $\eta$ can take any value in
$(0,\infty]$.
\end{lemma}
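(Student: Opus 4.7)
The plan is to verify the four assumptions essentially by unpacking the definitions and invoking already established tools: smooth dependence of ODEs on parameters for (A1) and (A2), the non-squeezing lemma for (A3), and Lemma \ref{OR:lem:HessianLemma} plus a Taylor expansion argument for (A4).

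First, (A1) and (A2) are essentially ODE statements. The ray $(x(t;z),p(t;z))$ is the solution of the Hamiltonian system \eq{Hsystem} (or the corresponding system for Schr\"odinger) with initial data that depends smoothly on $z$ via $\nabla_y\Phi(z)$. Smooth dependence of ODE solutions on parameters gives $x(t;z)\in C^\infty([0,T]\times K_0)$, settling (A1). For (A2), each Taylor coefficient $\phi_{\ell,\beta}(t;z)$, $M_\ell(t;z)$, $\phi_{\ell,0}(t;z)$ satisfies an ODE driven by smooth functions with smooth initial data, hence is $C^\infty$ in $(t,z)$, and $\phi(t,y;z)$ is a finite polynomial in $y$ with such coefficients, so $\phi\in C^\infty([0,T]\times\Real^n\times K_0)$.

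Next, (A3) is where the non-squeezing lemma does the work. Reading off \eq{PhaseWithZ}, one has $\nabla_y\phi(t,0;z)=p(t;z)$, which is real since the bicharacteristic equations preserve reality. So the desired lower bound is precisely
\begin{equation*}
   |p(t;z)-p(t;z')|+|x(t;z)-x(t;z')|\geq C|z-z'|,
\end{equation*}
which is the left inequality of \eq{NonSqueezeIneq}. I just need to check the hypotheses of \lem{nonsqueezing}: for Schr\"odinger, $p(0;z)=\nabla_y\Phi(z)$ with $\Phi\in C^\infty(K_0)$ gives Lipschitz continuity on compact $K_0$; for the strictly hyperbolic case the standing assumption $|\nabla\Phi|\geq\delta>0$ on $K_0$ yields the required $\inf_{z\in K_0}|p(0;z)|\geq\delta$. (For the $m$-th order hyperbolic case one argues identically for each choice of $H_\ell$, so the constant $C$ can be chosen uniformly in $\ell$.)

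Finally, (A4). By \lem{HessianLemma}, $\Im M_\ell(t;z)$ is positive definite for every $(t,z)$. Continuity of $M_\ell$ on the compact set $[0,T]\times K_0$ (and finiteness of the index $\ell$) then gives a uniform bound $\Im M_\ell(t;z)\geq 2\delta_0\,\mathrm{Id}$ for some $\delta_0>0$. Since $\phi_{\ell,0}(t;z)$ is real (its ODE in \eq{Hsystem} has real coefficients and real initial data) and $p_\ell(t;z)$ is real, the expansion \eq{PhaseWithZ} gives
\begin{equation*}
   \Im\phi_\ell(t,y;z)=\tfrac12 y\cdot\Im M_\ell(t;z)\,y+R(t,y;z),
\end{equation*}
where $R$ collects the imaginary parts of the cubic-and-higher Taylor terms in $y$. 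For $k=1$ there are no such terms, so $R\equiv0$ and the bound $\Im\phi_\ell(t,y;z)\geq\delta_0|y|^2$ holds for all $y$, justifying $\eta=\infty$. For $k>1$, uniform bounds on $\phi_{\ell,\beta}(t;z)$ over $[0,T]\times K_0$ yield $|R(t,y;z)|\leq C|y|^3$, so picking $\eta$ small enough that $C(2\eta)<\delta_0$ gives $\Im\phi_\ell(t,y;z)\geq\delta_0|y|^2/2$ on $|y|\leq 2\eta$ uniformly in $(t,z,\ell)$, which is (A4). The main (mild) obstacle is just this uniformity argument; everything else is immediate from the lemmas already proved.
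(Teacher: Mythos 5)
Your proof is correct and follows essentially the same route as the paper's: (A1)--(A2) from smooth dependence on initial data and coefficients, (A3) from $\nabla_y\phi(t,0;z)=p(t;z)$ together with the left inequality of the non-squeezing lemma, and (A4) from \lem{HessianLemma} plus uniform control of the cubic-and-higher Taylor terms on $|y|\leq 2\eta$, with the $k=1$ case exempt because the phase is purely quadratic. Your added details --- explicitly checking the hypotheses of \lem{nonsqueezing} (Lipschitz $p(0;z)$ for Schr\"odinger, $\inf_{K_0}|\nabla\Phi|>0$ for the hyperbolic case) and the compactness argument for uniformity in $(t,z,\ell)$ --- are points the paper leaves implicit, but the argument is the same.
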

\begin{proof}
The smoothness assumptions (A1) and (A2) follow from smoothness of initial data and smoothness of the coefficients in the underlying PDE. By definition, $\nabla_y\phi(t,0;z) = p(t;z)$ and (A3) follows from the non-squeezing lemma, \lem{nonsqueezing}. Finally, since the lower order terms of $\phi$ are real,
\begin{align*}
  \Im\phi(t,y;z) = y\cdot (\Im M(t;z))y + 
  \sum_{|\beta|=3}^{k+1} \frac{1}{\beta!}\Im\phi_\beta(t;z)y^\beta \ .
\end{align*}
Recalling that by \lem{HessianLemma}, $\Im M(t;z)$ is positive definite, we therefore have that when
$|y|\leq 2\eta$,
\begin{align*}
  \Im\phi(t,y;z) &\geq C |y|^2 - \sum_{|\beta|=3}^{k+1} \frac{1}{\beta!}\|\Im\phi_\beta(t;z)\|_{L^\infty} |y|^{|\beta|} \\
  &\geq C |y|^2 - |y|^2\sum_{|\beta|=3}^{k+1} \frac{1}{\beta!}\|\Im\phi_\beta(t;z)\|_{L^\infty} (2\eta)^{|\beta|-2} \\
  &\geq |y|^2\left(C-2\eta \sum_{|\beta|=3}^{k+1} \frac{1}{\beta!}\|\Im\phi_\beta(t;z)\|_{L^\infty} (2\eta)^{|\beta|-3}\right)\\
  &\geq \delta(\eta)|y|^2,
\end{align*}
where the constant $\delta(\eta)$ is positive for small enough $\eta$ and
independent of $t\in[0,T]$, since $\phi_\beta(t;z)$ are smooth functions of $t$.
This shows that $\phi(t,y;z)$ satisfies (A4). When $k=1$, there are only
quadratic terms in the phase and in this case $\phi(t,y;z)$ will satisfy (A4)
for any choice of $\eta\in(0,\infty]$.
\end{proof}

\subsection{Representations of $P[u_k]$ in Terms of $\Op_{\alpha,g,\eta}$}

In this section, we show that several of the intermediate quantities in the proof of \theo{ErrorEstWaveSchrod} can be written as sums involving the operators $\Op_{\alpha,g,\eta}$.

\begin{lemma}
\lblem{PuInTermsOfQ} 
Let $P$ be an $m$-th order strictly hyperbolic operator and $P^\varepsilon$ a
semiclassical Schr\"odinger operator, satsifying the assumptions stated for
\eq{generalHyperbolicEqIntro} (in \sect{GBforHyperbolicEq}) and
\eq{SchrodingerEquationIntro} respectively. Let $u_k$ be the corresponding
Gaussian beam superpositions given in \sect{SuperpositionsOfGB} and
\sect{SchrodingerEq}. Then $P[u_k]$ and $P^\varepsilon[u_k]$ can be expressed as
a finite sum of the operators $\Op$:
\begin{align*}
\left. \begin{array}{l}P[u_k]\\ \\P^\varepsilon[u_k]\end{array} \right\}= \left(\frac{1}{2\pi}\right)^\frac{n}{2} \sum_{j=1}^{J} \varepsilon^{\ell_j}\left( \Op_{\alpha_j,g_j,\eta}\  \chi_{K_0}\right)(t,y)  +\Ordo(\varepsilon^\infty), \ \left\{ \begin{array}{l} \ell_j \geq k/2-m+1 \\ \\ \ell_j \geq k/2+1  \end{array} \right.	,
\end{align*}
with $\chi_{K_0}(z)$ the characteristic function on $K_0$ and
$\Op_{\alpha_j,g_j,\eta}$ satisfying assumptions (A1)-(A5) if $\eta$ is
sufficiently small. In the case of $k=1$, $\eta$ can take any value in
$(0,\infty]$.
\end{lemma}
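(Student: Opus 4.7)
The plan is to push the operator $P$ (resp.\ $P^\varepsilon$) inside the integral defining $u_k$ and then invoke the Gaussian beam construction of \sect{SuperpositionsOfGB} and \sect{SchrodingerEq}, which is precisely designed to kill low-order terms in the asymptotic expansion. First I would write
\begin{align*}
 P[u_k] = \sum_{\ell=0}^{m-1}(2\pi\varepsilon)^{-n/2}\int_{K_0} P[v_{k,\ell}(\cdot;z)](t,y)\,dz,
\end{align*}
and apply the Leibniz rule to $P$ acting on $\rho_\eta(y-x_\ell(t;z))\, a_{\ell,j}(t,y-x_\ell(t;z);z)\, e^{i\phi_\ell/\varepsilon}$. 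Terms in which at least one derivative falls on $\rho_\eta$ are supported where $\eta\leq|y-x_\ell(t;z)|\leq 2\eta$; there assumption (A4), guaranteed by \lem{GBphaseOKforQ}, gives $\Im\phi_\ell/\varepsilon\geq\delta\eta^2/\varepsilon$, so by integration against the Gaussian factor these terms are $\Ordo(\varepsilon^\infty)$ in any Sobolev norm, uniformly on $[0,T]$. (For $k=1$ no cutoff is needed.)

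Next I would exploit the construction: with the cutoff removed, the preliminary beam $\tilde v_{k,\ell}$ satisfies
\begin{align*}
 P[\tilde v_{k,\ell}](t,y;z) = \Bigl(\sum_{r=-m}^{J}\varepsilon^{r} c_{r,\ell}(t,y;z)\Bigr) e^{i\phi_\ell/\varepsilon},
\end{align*}
where each $c_{r,\ell}$ is smooth in $(t,y,z)$ and, by choice of the Hamiltonian ODEs and the Taylor polynomials of $\phi_\ell$ and $a_{\ell,j}$, vanishes to order at least $k-2(r+m)+1$ on the ray $y=x_\ell(t;z)$. (For the Schr\"odinger operator the semiclassical scaling shifts the indexing so that the sum starts at $r=0$ and the vanishing order is $k-2r+1$; otherwise the argument is identical.) Taylor's formula with integral remainder then lets me write
\begin{align*}
  c_{r,\ell}(t,y;z) = \sum_{|\alpha|=N_r}^{M_r} (y-x_\ell(t;z))^{\alpha}\, g_{r,\alpha,\ell}(t,y;z),
\end{align*}
with $N_r=\max(0,k-2(r+m)+2)$ and $g_{r,\alpha,\ell}\in C^\infty([0,T]\times\Real^n\times K_0)$, uniformly bounded in $(y,z)$ with bounded $y$-derivatives; this is the content of assumption (A5).

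Plugging this back into the integrand and re-inserting $\rho_\eta$ (up to another $\Ordo(\varepsilon^\infty)$ error from the region where derivatives of $\rho_\eta$ live) the contribution of the $(r,\alpha,\ell)$-term becomes
\begin{align*}
 (2\pi\varepsilon)^{-n/2}\varepsilon^{r}\!\int_{K_0}\! g_{r,\alpha,\ell}(y-x_\ell)^{\alpha}\rho_\eta\, e^{i\phi_\ell/\varepsilon}dz = (2\pi)^{-n/2}\varepsilon^{r+|\alpha|/2}\,\bigl(\Op_{\alpha,g_{r,\alpha,\ell},\eta}\chi_{K_0}\bigr)(t,y),
\end{align*}
because the prefactor $\varepsilon^{-n/2}$ together with $\varepsilon^{|\alpha|/2}$ exactly produces the normalisation $\varepsilon^{-(n+|\alpha|)/2}$ built into $\Op_{\alpha,g,\eta}$. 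The exponent is $\ell_j=r+|\alpha|/2\geq r+N_r/2\geq k/2-m+1$ in the hyperbolic case and $\ell_j\geq k/2+1$ in the Schr\"odinger case, which is the desired bound.

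Finally, I would check the admissibility conditions for each resulting operator: (A1)--(A4) follow from \lem{GBphaseOKforQ} (for $\eta$ small enough, or arbitrary when $k=1$), while (A5) follows from the smoothness of the Taylor coefficients built from the ODEs together with the integral form of the remainder. The main technical nuisance is not any single step but the careful bookkeeping of Leibniz terms and Taylor orders needed to verify that every summand either fits into the $\Op_{\alpha,g,\eta}$ template with $|\alpha|\geq N_r$ or is absorbed into the $\Ordo(\varepsilon^\infty)$ remainder coming from $\supp\nabla\rho_\eta$; once that accounting is set up, the power-of-$\varepsilon$ claim is a direct arithmetic consequence of the vanishing orders dictated by the Gaussian beam construction.
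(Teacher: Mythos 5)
Your proposal is correct and follows essentially the same route as the paper's proof: expand $P[v_{k,\ell}]$ (resp.\ $P^\varepsilon[v_k]$) using the construction so that each coefficient $c_{r,\ell}$ (resp.\ $d_r$) vanishes to order $[k-2(r+m)+2]_+-1$ (resp.\ $[k+2-2r]_+-1$) on the central ray, apply Taylor's remainder formula to extract the factors $(y-x_\ell(t;z))^\alpha$, absorb all terms involving derivatives of $\rho_\eta$ into $\Ordo(\varepsilon^\infty)$ via the lower bound on $\Im\phi_\ell$ in the annulus $\eta\leq|y-x_\ell|\leq 2\eta$, and match the normalisation $\varepsilon^{-n/2}\varepsilon^{r}=\varepsilon^{r+|\alpha|/2}\varepsilon^{-(n+|\alpha|)/2}$, with (A1)--(A4) supplied by \lem{GBphaseOKforQ} and (A5) by smoothness of the coefficients. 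Your exponent bookkeeping $\ell_j=r+|\alpha|/2\geq k/2-m+1$ (and $\geq k/2+1$ for Schr\"odinger, where the sum starts at $r=0$) matches the paper's computation exactly, including the degenerate case $[\,\cdot\,]_+=0$.
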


\begin{proof}
For a single Gaussian beam $v_{k,\ell}(t,y;z)$ for the hyperbolic operator,
following the discussion in \cite{Ralston:82} and \sect{GBforHyperbolicEq}, we have
\begin{align*}
  P[v_{k,\ell}(t,y;z)] = \sum_{r=-m}^{\lceil k/2\rceil-1} \varepsilon^r \rho_\eta(y-x_{\ell}(t;z)) c_{r,\ell}(t,y;z) e^{i \phi(t,y-x_{\ell}(t;z);z)/\varepsilon} + E_k(t,y;z)\ ,
\end{align*}
where $E_k$ contains terms that are multiplied by derivatives of the cutoff
function. For first order beams $\eta=\infty$ and $\rho_\infty\equiv 1$ making
$E_1(t,y;z)\equiv 0$. For higher order beams, $E_k\equiv 0$ in an $\eta$
neighborhood of $x_{\ell}(t;z)$ and since $\eta$ is small enough so that the
imaginary part of $\phi_{\ell}$ is strictly positive for $\eta \leq
|y-x_{\ell}(t;z)|\leq 2\eta$, $E_k$ will decay exponentially as $\varepsilon\to
0$. Thus, $E_k = \Ordo(\varepsilon^\infty)$ for all orders of beams
and $t\in [0,T]$.

Each $c_r$ can be expressed in terms of the symbols of $P$:
\begin{align*}
 c_{-m+j,\ell}(t,y;z) &= La_{{\ell},j-1} + \sigma_m\left(t,y,\partial_t\phi_{\ell},\nabla_y\phi_{\ell}\right)a_{{\ell},j} + R_{\ell,j}(t,y;z) \ ,
\end{align*}
where $a_{{\ell},j}\equiv 0$ for $j\not\in [0,\lceil k/2 \rceil -1]$ and $L$ is given by (using the Einstein summation),
\begin{align*}
La = -i\left( \frac{\partial \sigma_m}{\partial \tilde{p}_j }(\tilde{y},\tilde{\nabla}\phi_{\ell}) \frac{\partial a}{\partial \tilde{y}_j}\right)
     -\left( \frac{i}{2}\frac{\partial^2 \sigma_m}{\partial \tilde{p}_j\tilde{p}_q}(\tilde{y},\tilde{\nabla{}}_y\phi_{\ell}) \phi_{{\ell},\tilde{y}_j\tilde{y}_q}+ \sigma_{m-1}(\tilde{y},\tilde{\nabla{}}\phi_{\ell})\right) a \ ,
\end{align*}
with $\tilde{\nabla}=(\partial_t,\nabla)$, $\tilde{y}=(t,y)$ and
$\tilde{p}=(\tau,p)$. The function $R_{\ell,0}(t,y;z)\equiv 0$ and the functions
$R_{\ell,j}(t,y;z)$ for $j>0$ are complicated functions of the $m-2$ and lower
order symbols of $P$ and the functions $\phi_{\ell}$, $a_{{\ell},0}$, \ldots,
$a_{{\ell},j-1}$ and their derivatives. We note that since $a_{{\ell},j}(t,y;z)$ are
compactly supported in $z\in K_0$, so are the functions $c_{r,\ell}(t,y;z)$.

By the construction of a Gaussian beam, $c_{r,\ell}$ vanishes up to
order ${k-2(r+m)+1}$ on $y=x_\ell(t;z)$. Note that ${k-2(r+m)+1}$ may be negative,
in which case, $c_{r,\ell}$ is not necessarily $0$ on $y=x_\ell(t;z)$. Thus, by
Taylor's remainder formula,
\begin{align*}
 c_{r,\ell}(t,y;z) = \sum_{|\alpha|=[k-2(r+m)+2]_+} c_{r,\ell,\alpha}(t,y;z)\;(y-x_{\ell}(t;z))^\alpha \ ,
\end{align*}
for some coefficient functions $c_{r,\ell,\alpha}(t,y;z)$ that are compactly
supported in $z\in K_0$ and $[a]_+=\max[a,0]$. For the superposition $u_k(t,y)$,
we have
\begin{align*}
 &P[u_k] = \left(\frac{1}{2\pi\varepsilon}\right)^\frac{n}{2} \sum_{{\ell}=0}^{m-1} \int_{K_0} P[v_{k,\ell}(t,y;z)]dz \\
        &\ = \left(\frac{1}{2\pi\varepsilon}\right)^\frac{n}{2} \sum_{{\ell}=0}^{m-1} \left(\sum_{r=-m}^{\lceil k/2\rceil-1} \int_{K_0}\varepsilon^r \rho_\eta(y-x_{\ell}(t;z)) c_{r,\ell}(t,y;z) e^{i\phi_{\ell}(t,y-x_{\ell}(t;z);z)/\varepsilon}dz\right) ,
\end{align*}
modulo additive terms that are $\Ordo(\varepsilon^\infty)$. Now substituting for
$c_{r,\ell}$ we can rewrite this expression in terms of the operators $\Op$,
\begin{align*}
 P[u_k] &= \left(\frac{1}{2\pi\varepsilon}\right)^\frac{n}{2} \sum_{{\ell}=0}^{m-1}\Bigg( \sum_{r=-m}^{\lceil k/2\rceil-1} \Bigg( \sum_{|\alpha|=[k-2(r+m)+2]_+}
 \int_{K_0}\varepsilon^r \rho_\eta(y-x_{\ell}(t;z)) \\
 &\qquad\qquad\qquad\qquad\qquad\times  c_{r,\ell,\alpha}(t,y;z)(y-x_{\ell}(t;z))^\alpha e^{i\phi_{\ell}(t,y-x_{\ell}(t;z);z)/\varepsilon}dz\Bigg)\Bigg) \\
&= \left(\frac{1}{2\pi}\right)^\frac{n}{2} \sum_{{\ell}=0}^{m-1} \Bigg(\sum_{r=-m}^{\lceil k/2\rceil-1} \Bigg( \sum_{|\alpha|=[k-2(r+m)+2]_+}\varepsilon^{r+|\alpha|/2}\left( \Op_{\alpha,c_{r,\ell,\alpha},\eta} \ \chi_{K_0}\right)(t,y) \Bigg)\Bigg)  \ ,
\end{align*}
modulo additive terms that are $\Ordo(\varepsilon^\infty)$ and where
$\chi_{K_0}(z)$ is the characteristic function on $K_0$ and the functions
$c_{r,\ell,\alpha}(t,y;z)$ satisfy condition (A5) since the coefficients of $P$,
$\phi_{\ell,\beta}$ and $a_{\ell,j,\beta}$ are all smooth. By
\lem{GBphaseOKforQ} the operators also satisfy (A1)-(A4) under the condition on
$\eta$.
 
 To simplify the notation, let $j=1,\ldots,J<\infty$ enumerate all of the combination of ${\ell}$, $r$, and $\alpha$ in the triple sum above and rewrite the sums as
\begin{align*}
P[u_k]= \left(\frac{1}{2\pi}\right)^\frac{n}{2} \sum_{j=1}^{J} \varepsilon^{\ell_j}\left( \Op_{\alpha_j,g_j,\eta}\  \chi_{K_0}\right)(t,y)  +\Ordo(\varepsilon^\infty) \ ,
\end{align*}
with $\ell_j\geq (k/2-m+1)$ and $g_j = c_{r_j,\ell_j,\alpha_j}$. Thus, we have the desired result for $P$.

Now, for each Gaussian beam $v_k(t,y;z)$ for the Schr\"odinger equation defined in \sect{SchrodingerEq}, following \cite{LiuRalston:10}, we compute
\begin{align*}
P^\varepsilon[v_k]=\rho_\eta  P^\varepsilon\left[\sum_{j=0}^{\lceil k/2 \rceil -1} \varepsilon^j  a_j  e^{i\phi/\varepsilon}\right]+E_k(t,y;z)
\end{align*}
where again $E_k=\Ordo(\varepsilon^\infty)$. Note that
\begin{align*}
e^{-i\phi/\varepsilon} P^\varepsilon[ae^{i\phi/\varepsilon}] = a G(t,y)-i\varepsilon La-\frac{\varepsilon^2}{2}\triangle a\ ,
\end{align*}
with
\begin{align*}
G =\partial_t\phi+\frac{1}{2}|\nabla_y \phi|^2 + V(y) \qquad \mbox{ and } \qquad
L =\partial_t+\nabla_y\phi \cdot \nabla_y+\frac{1}{2}\triangle_y\phi \ .
\end{align*}
Thus, we have
\begin{align*}
	P^\varepsilon\left[\sum_{j=0}^{\lceil k/2 \rceil -1} \varepsilon^j  a_j  e^{i\phi/\varepsilon}\right]= \sum_{j=0}^{\lceil k/2\rceil -1} \varepsilon^j \left[ a_jG - i\varepsilon La_j -\frac{\varepsilon}{2}\triangle_y a_j \right]e^{i\phi/\varepsilon} = \sum_{r=0}^{\lceil k/2\rceil+1} \varepsilon^r d_r e^{i\phi/\varepsilon}\ ,
\end{align*}
where for convenience $a_j\equiv 0$ for $j\not\in[0,\lceil k/2 \rceil -1]$ and
\begin{align*}
d_{r} &=a_r G-iLa_{r-1}-\frac{1}{2}\triangle_y a_{r-2}\ , \qquad r=0, \ldots, \lceil k/2 \rceil+1  \ .
\end{align*}
By construction of the phase coefficients in \sect{SchrodingerEq}, we have that
on ${y=x(t;z)}$, $G$ vanishes to order $k+1$ and by construction of the amplitude
coefficients, the quantity ${-iLa_{r-1}-\frac12\triangle_y a_{r-2}}$ vanishes to order
$k-2r+1$. Thus, $d_r$ vanishes to order $k-2r+1$ on $y=x(t;z)$, where again we remark that if $k-2r+1$ is negative, $d_r$ does not vanish on $y=x(t;z)$. By Taylor's remainder formula, we have 
\begin{align*}
d_r(t,y;z) = \sum_{|\alpha|=[k+2-2r]_+} d_{r,\alpha}(t,y;z)\;(y-x(t;z))^\alpha\ , 
\end{align*}
with $d_{r,\alpha}(t,y;z)$ compactly supported in $z \in K_0$. Hence, 
\begin{align*}
	P^\varepsilon[v_k] = \rho_\eta \sum_{r=0}^{\lceil k/2\rceil +1} \varepsilon^r \left( \sum_{|\alpha|=[k+2-2r]_+} d_{r,\alpha}(y-x(t;z))^\alpha \right) e^{i\phi/\varepsilon} + \Ordo(\varepsilon^\infty) \ .
\end{align*}
For the superposition, we have 
\begin{align*}
P^\varepsilon[u_k]&=\left(\frac{1}{2\pi\varepsilon}\right)^{\frac{n}{2}} \int_{K_0} P^\varepsilon[v_k]dz \\
&=\left(\frac{1}{2\pi}\right)^{\frac{n}{2}} \sum_{r=0}^{\lceil k/2\rceil -1} \sum_{|\alpha|=[k+2-2r]_+}\varepsilon^{r+|\alpha|/2} ( \Op_{\alpha,d_{r,\alpha},\eta} \chi_{K_0} )(t,y)  + \Ordo(\varepsilon^\infty)\ .
\end{align*}
As above, the smoothness of $V(y)$, $\phi_\beta$ and $a_{j,\beta}$
guarantees that $d_{r,\alpha}$ satisfies (A5), while
\lem{GBphaseOKforQ} ensures that also (A1)-(A4) are true for $\Op_{\alpha,d_{r,\alpha},\eta}$ under the condition on $\eta$.
 Again, we let $j=1,\ldots,J<\infty$ enumerate all of the combination of $r$ and $\alpha$ in the double sum above. Thus,
\begin{align*}
P^\varepsilon[u_k]= \left(\frac{1}{2\pi}\right)^\frac{n}{2} \sum_{j=1}^{J} \varepsilon^{\ell_j}\left( \Op_{\alpha_j,g_j,\eta}\  \chi_{K_0}\right)(t,y)  +\Ordo(\varepsilon^\infty) \ ,
\end{align*}
with $\ell_j\geq (k/2+1)$ and $g_j = d_{r_j,\alpha_j}$. Thus, the proof is complete.
\end{proof}

\subsection{Initial Data Errors}\lbsec{InitalDataErrors}

In addition to establishing the role of the $\Op$-operators and estimating
their norms, 
we need to consider the convergence of the Gaussian beam 
superposition to the initial data to be able to prove
\theo{ErrorEstWaveSchrod}. We start with the following lemma. 

\begin{lemma}\lblem{ApproxLemma} Let $\Phi\in C^\infty(K_0)$ be a
real-valued function and $A_j\in C^\infty_0(K_0)$. With $k\geq 1$, define
\begin{align*}
 u(y)     &= \sum_{j=0}^N \varepsilon^j A_j(y)e^{i\Phi(y)/\varepsilon} ,\\
 u_k(y) &= \left(\frac{1}{2\pi\varepsilon}\right)^{\frac{n}{2}}\int_{\Real^n}  \rho_\eta(y-z) \sum_{j=0}^N \varepsilon^j a_j(y-z;z) e^{i\phi(y-z;z)/\varepsilon-|y-z|^2/2\varepsilon} dz \ ,
\end{align*}
where $\phi(y-z;z)$ is the $k+1$ order Taylor series of $\Phi$ about $z$,
$a_j(y-z;z)$ is the same as the Taylor series of $A_j$ about $z$ up to order
$k-2j-1$ (but may differ in higher order terms) and $\rho_\eta$ is the cutoff function \eq{cutoff} with $0<\eta\leq\infty$. Then for some constant $C_s$,
\begin{align*}
 \left\| u_k - u \right\|_{H^s} &\leq C_s \varepsilon^{\frac{k}{2}-s} \ .
\end{align*}
\end{lemma}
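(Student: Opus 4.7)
The strategy is to exploit the concentration of the Gaussian kernel $e^{-|y-z|^2/(2\varepsilon)}$ near $z=y$ at scale $\sqrt{\varepsilon}$. On that effective support, Taylor's theorem gives $|\phi(y-z;z)-\Phi(y)|=O(|y-z|^{k+2})$ of order $\varepsilon^{(k+2)/2}$, so that $|(\phi-\Phi)/\varepsilon|=O(\varepsilon^{k/2})$, and similarly $\varepsilon^j|a_j(y-z;z)-A_j(y)|=O(\varepsilon^{j+(k-2j)/2})=O(\varepsilon^{k/2})$. The plan is to prove the pointwise/$L^2$ estimate ($s=0$) from these Taylor bounds and Gaussian moments, then lift to general $H^s$ by induction on the order of differentiation.

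For $s=0$: since $(2\pi\varepsilon)^{-n/2}\int e^{-|y-z|^2/(2\varepsilon)}\,dz=1$ and the complement of the cutoff contributes only $O(\varepsilon^\infty)$ (its support lies at distance $\geq\eta\gg\sqrt{\varepsilon}$), one can write $u(y)$ as the same Gaussian integral appearing in $u_k$. Factoring $e^{i\Phi(y)/\varepsilon}$ out of the resulting integrand for $u-u_k$, applying $|e^{ix}-1|\leq|x|$, and invoking Taylor's remainder theorem for $\Phi-\phi$ and $A_j-a_j$, the integrand is pointwise bounded by
\begin{align*}
C\sum_{j=0}^N\varepsilon^j|y-z|^{k-2j}\;+\;C\frac{|y-z|^{k+2}}{\varepsilon}.
\end{align*}
The Gaussian moment identity $(2\pi\varepsilon)^{-n/2}\int|y-z|^m e^{-|y-z|^2/(2\varepsilon)}\,dz=C_m\varepsilon^{m/2}$ reduces each term to $O(\varepsilon^{k/2})$, yielding $|u(y)-u_k(y)|\leq C\varepsilon^{k/2}$ pointwise; since both $u$ and $u_k$ are effectively compactly supported in a neighborhood of $K_0$ (the tails of $u_k$ outside are $O(\varepsilon^\infty)$ by exponential Gaussian decay), integration gives $\|u_k-u\|_{L^2}\leq C\varepsilon^{k/2}$.

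For the inductive step, I would show that $\partial_{y_\ell}u_k$ is itself a Gaussian beam superposition of the form treated by the lemma, multiplied by $\varepsilon^{-1}$, whose amplitudes Taylor-match those of $\partial_{y_\ell}u$ to the required order. Differentiating under the integral produces a dangerous factor $(y-z)_\ell/\varepsilon$ from $\partial_{y_\ell}e^{-|y-z|^2/(2\varepsilon)}$, but the identity $(y-z)_\ell e^{-|y-z|^2/(2\varepsilon)}=\varepsilon\,\partial_{z_\ell}e^{-|y-z|^2/(2\varepsilon)}$, followed by integration by parts in $z$ (no boundary terms by compact $z$-support of the amplitudes), causes the first-slot $y$-derivatives to cancel exactly, leaving
\begin{align*}
\partial_{y_\ell}u_k=\varepsilon^{-1}\Bigl(\tfrac{1}{2\pi\varepsilon}\Bigr)^{n/2}\!\int\rho_\eta\sum_{j=0}^N\varepsilon^j\bigl[i\,a_j(\xi;z)\,\partial_z\phi(\xi;z)+\partial_z a_{j-1}(\xi;z)\bigr]\Big|_{\xi=y-z}e^{i\phi/\varepsilon-|y-z|^2/(2\varepsilon)}\,dz+O(\varepsilon^\infty),
\end{align*}
with $\partial_z$ the second-slot derivative and $a_{-1}:=0$. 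Reindexing $\partial_{y_\ell}u$ yields the corresponding representation with amplitudes $B_{j,\ell}:=iA_j\,\partial_\ell\Phi+\partial_\ell A_{j-1}$, and since $a_j$, $\partial_za_{j-1}$, $\partial_z\phi$ Taylor-match $A_j$, $\partial_\ell A_{j-1}$, $\partial_\ell\Phi$ to orders at least $k-2j-1$, the new amplitude $ia_j\partial_z\phi+\partial_z a_{j-1}$ Taylor-matches $B_{j,\ell}$ to order $k-2j-1$. The hypotheses of the base case therefore apply to $\varepsilon\,\partial_{y_\ell}(u-u_k)$, giving $\|\partial_{y_\ell}(u-u_k)\|_{L^2}\leq C\varepsilon^{k/2-1}$; iteration yields $\|\partial^\alpha(u_k-u)\|_{L^2}\leq C\varepsilon^{k/2-|\alpha|}$ for $|\alpha|\leq s$, hence the stated $H^s$ bound.

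The main obstacle is precisely this induction step: a naive derivative produces an $(y-z)/\varepsilon$ factor which is unbounded pointwise on the effective Gaussian support (of width $\sqrt{\varepsilon}$), so the integration-by-parts trick above is essential, and the accompanying verification that the new polynomial amplitudes still Taylor-match the correct amplitudes of $\partial^\alpha u$---that is, that the structural hypothesis of the lemma survives each differentiation---is the principal technical content.
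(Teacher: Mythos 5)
Your proof is correct, but it takes a genuinely different route from the paper's. The paper differentiates the two Gaussian-integral representations directly by $\partial_y^\beta$, so each derivative hits the full exponents $i\Phi(y)-|y-z|^2/2$ and $i\phi(y-z;z)-|y-z|^2/2$ and costs a factor $\varepsilon^{-1}$; it then pairs the resulting terms via a combinatorial bookkeeping showing the paired integrands Taylor-match to order $k-2j-1-|\beta|+\ell$, and closes by invoking Theorem~2.1 of \cite{Tanushev:08} for the $L^2$ estimate of each paired difference. You instead prove the $s=0$ case from scratch (insert the Gaussian identity, factor out $e^{i\Phi/\varepsilon}$, use $|e^{ix}-1|\leq|x|$ --- valid since $\phi-\Phi$ is real --- and Gaussian moments), and then lift to $H^s$ by an induction in which the identity $(y-z)_\ell e^{-|y-z|^2/2\varepsilon}=\varepsilon\,\partial_{z_\ell}e^{-|y-z|^2/2\varepsilon}$ plus integration by parts (no boundary terms, by the compact $z$-support inherited from the coefficients of $a_j$) cancels all first-slot derivatives; indeed one checks that even the $\rho_\eta$ terms cancel exactly, not merely up to $\Ordo(\varepsilon^\infty)$. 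The resulting superposition has amplitudes $ia_j\partial_{z_\ell}\phi+\partial_{z_\ell}a_{j-1}$, which Taylor-match $iA_j\partial_\ell\Phi+\partial_\ell A_{j-1}$ to order $\min(k-2j-1,\,k+1)=k-2j-1$, exactly the borderline the lemma requires --- and here the lemma's allowance that $a_j$ ``may differ in higher order terms'' is precisely what makes your induction hypothesis stable under differentiation. The two arguments encode the same arithmetic (each $y$-derivative costs $\varepsilon^{-1}$, compensated by powers of $|y-z|\sim\sqrt{\varepsilon}$), but yours is self-contained: it avoids both the multinomial expansion and the external theorem, at the price of verifying structural invariance at each step, which you do. Two minor points to make explicit in a polished write-up: $\Phi$ must be smoothly extended to a neighborhood of $K_0$ for the global Taylor-remainder bounds (a step the paper also leaves implicit), and for $\eta=\infty$ the polynomial growth of the $a_j$ in $y-z$ must be absorbed by the Gaussian decay when passing from your pointwise bound to the $L^2$ bound.
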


\begin{proof}
The proof of this lemma is based on a discussion in \cite{Tanushev:08}. We first
assume that $\eta<\infty$. Looking at each of the terms in the sum above
separately, the estimate depends on how well
\begin{align}\lbeq{InitDataExact}
 \varepsilon^j A_j(y)e^{i\Phi(y)/\varepsilon} 
 & =  \left(\frac{1}{2\pi\varepsilon}\right)^{\frac{n}{2}}\int_{\Real^n} \varepsilon^j A_j(y)e^{i\Phi(y)/\varepsilon-|y-z|^2/2\varepsilon} dz
\end{align}
 is approximated by 
\begin{align}\lbeq{InitDataApprox}
 \left(\frac{1}{2\pi\varepsilon}\right)^{\frac{n}{2}}\int_{\Real^n}  \varepsilon^j\rho_\eta(y-z)  a_j(y-z;z)e^{i\phi(y-z;z)/\varepsilon-|y-z|^2/2\varepsilon} dz \ .
\end{align}
Since we want to estimate the difference between these two functions in a
Sobolev norm, we need to consider differences between their $\partial_y^\beta$
derivatives in the $L^2$ norm. Since derivatives of \eq{InitDataApprox} that
fall on the cutoff function $\rho_\eta(y-z)$ vanish in a neighborhood of $z=y$
and the integrand is compactly supported in $y$ and $z$, they will be
$\Ordo(\varepsilon^\infty)$ in the $L^2$ norm and do not contribute to the
estimate. Thus, when we differentiate \eq{InitDataExact} and \eq{InitDataApprox} by $\partial_y^\beta$ and pair the results from the sequence of differentiations, the terms that will contribute to the estimate will be of the form
\begin{align}\lbeq{InitDataExactTerms}
\left(\frac{1}{2\pi\varepsilon}\right)^{\frac{n}{2}}\varepsilon^{j-\ell}C_\ell^{\{\gamma_\ell\}}\partial_y^{\delta_1}[
A_j(y)]\left(\prod_{m=1}^\ell\partial_y^{\gamma_m}\left[i\Phi(y)-|y-z|^2/2\right]\right)
\end{align}
for \eq{InitDataExact} and
\begin{align}\lbeq{InitDataApproxTerms}
\left(\frac{1}{2\pi\varepsilon}\right)^{\frac{n}{2}}\varepsilon^{j-\ell}C_\ell^{\{\gamma_\ell\}}\partial_y^{\delta_1}[
a_j(y-z;z)]\left(\prod_{m=1}^\ell\partial_y^{\gamma_m}\left[i\phi(y-z;z)-|y-z|^2/2\right]\right)
\end{align}
for \eq{InitDataApprox}, where $C_\ell^{\{\gamma_\ell\}}$ are combinatorial
coefficients. These terms are integrated over $\Real^{n}$ in $z$ and summed over
the multi-indexes $\delta_1+\gamma=\beta$, the index $\ell=1\ldots|\gamma|$, and
multi-indexes $|\gamma_1|,\ldots,|\gamma_\ell|\geq 1$,
$\gamma_1+\ldots+\gamma_\ell=\gamma$. Furthermore, we have that
$\max_m[\gamma_m]\leq |\gamma|-\ell+1$. These formulas can be obtained through
long but straightforward calculations. The important part is to recall that
$i\phi(y-z;z)-|y-z|^2/2$ is the $k+1$ order Taylor series of
$i\Phi(y)-|y-z|^2/2$ about $z$ and $a_j(y-z;z)$ agrees with the Taylor series of
$A_j(y)$ about $z$ up to order $k-2j-1$, so that \eq{InitDataApproxTerms} will
agree with the Taylor series of \eq{InitDataExactTerms} up to order
\begin{align*}
\min\left[k-2j-1-|\delta_1|,k+1-\max_{1\leq m\leq \ell}|\gamma_m|\right]&\geq
k-2j-1-|\delta_1|-|\gamma|+\ell\\
&=k-2j-1-|\beta|+\ell \ . 
\end{align*}
Thus, we have that $\|\partial_y^\beta (u_k-u)\|_{L^2}$ can be estimated by a
sum of terms of the form
\begin{align}\lbeq{InitDataTerms}
\varepsilon^{j-\ell}\Bigg\| \left(\frac{1}{2\pi\varepsilon}\right)^{\frac{n}{2}}\int_{\Real^n} &\Big[ B_\ell(y)e^{i\Phi(y)/\varepsilon-|y-z|^2/2\varepsilon}  \notag \\ 
&-\rho_\eta(y-z) b_\ell(y-z;z) e^{i\phi(y-z;z)/\varepsilon-|y-z|^2/2\varepsilon}\Big] dz  \Bigg\|_{L^2} \ ,
\end{align}
where $\ell\leq|\beta|$, $|B_\ell - b_\ell| =
\Ordo(|y-z|^{k-2j-|\beta|+\ell})$ and $|\Phi-\phi| = \Ordo(|y-z|^{k+2})$. Now, the
proof of Theorem~2.1 in \cite{Tanushev:08} can be applied directly to \eq{InitDataTerms} to obtain the
estimate,
\begin{align*}
\|\partial_y^\beta (u_k-u)\|_{L^2} \leq \sum_{j=0}^N \varepsilon^{j-\ell}C\varepsilon^{\frac{k}{2}-j-\frac{|\beta|}{2}+\frac{\ell}{2}} \leq C\varepsilon^{\frac{k}{2}-|\beta|} \ .
\end{align*}
Thus, we have the result for $\eta<\infty$. The extension for $\rho_\infty\equiv
1$ follows directly, since the cutoff $\rho_\nu$ for $\nu<\infty$ introduces
$\Ordo(\varepsilon^\infty)$ errors in the $L^2$ norm:
\begin{align*}
\left\| \left(\frac{1}{2\pi\varepsilon}\right)^{\frac{n}{2}}\int_{\Real^n} \left[ 1-\rho_\eta(y-z)\right] b_\ell(y-z;z) e^{i\phi(y-z;z)/\varepsilon-|y-z|^2/2\varepsilon} dz  \right\|_{L^2} = \Ordo(\varepsilon^\infty) \ ,
\end{align*}
as $1-\rho_\nu$ vanishes in a neighborhood $z=y$ and the integrand is compactly supported in $z$. 
\end{proof}

Using \lem{ApproxLemma}, we can estimate the asymptotic convergence rate of the
superposition solution to the initial data.
\begin{theorem}\lbtheo{InitialData}
For the Gaussian beam superposition, $u_k$, given in \sect{SuperpositionsOfGB}
and the solution, $u$, to the strictly hyperbolic PDE \eq{generalHyperbolicEqIntro},
we have
\begin{align*}
 \left\| \partial_t^{\ell} u_k(0,\cdot) -  \partial_t^{\ell} u(0,\cdot) \right\|_{H^s} &\leq C_{{\ell},s} \varepsilon^{\frac{k}{2}-{\ell}-s} \ ,
\end{align*}
for some constant $C_{{\ell},s}$ and $0\leq {\ell} \leq m-1$.

\indent Similarly, for the Gaussian beam superposition, $u_k$, given in
\sect{SuperpositionsOfGB} and the solution, $u$, to the wave equation \eq{WaveEquationIntro}, we
have
\begin{align*}
 \left\| u_k(0,\cdot) - u(0,\cdot) \right\|_{E} &\leq C \varepsilon^{\frac{k}{2}} \ .
\end{align*}

\noindent Furthermore, for the superposition, $u_k$, given in
\sect{SchrodingerEq} and the solution, $u$, to the Schr\"odinger equation
\eq{SchrodingerEquationIntro}, we have
\begin{align*}
  \left\|  u_k(0,\cdot) - u(0,\cdot)  \right\|_{L^2} &\leq C \varepsilon^{\frac{k}{2}} \ .
\end{align*}
\end{theorem}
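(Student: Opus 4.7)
The plan is to reduce each of the three estimates to a single application of Lemma~\ref{OR:lem:ApproxLemma}, treating the initial data of the superposition as a specialization of the integral that lemma handles.

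For the $m$-th order hyperbolic PDE, I would start from the explicit formula \eq{hyperbolicInitDataGB}, which already expresses $\partial_t^\ell u_k(0,y)$ as $\varepsilon^{-\ell}$ times an integral of exactly the form appearing in Lemma~\ref{OR:lem:ApproxLemma}, plus an $\Ordo(\varepsilon^\infty)$ remainder from derivatives falling on the cutoff $\rho_\eta$. The key checks are the structural ones: by the initializations
\begin{align*}
x_\ell(0;z)&=z, &  p_\ell(0;z)&=\nabla\Phi(z), &  M_\ell(0;z)&=\partial_y^2\Phi(z)+i\,\mathrm{Id}, &  \phi_{\ell,\beta}(0;z)&=\Phi_\beta(z),
\end{align*}
the Gaussian beam phase at $t=0$ decomposes as $\phi_\ell(0,y-z;z)=\phi_{\mathrm{re}}(y-z;z)+i|y-z|^2/2$, with $\phi_{\mathrm{re}}$ equal to the $(k{+}1)$-st order Taylor polynomial of $\Phi$ about $z$; the amplitude polynomials $b_{\ell,j}$ agree with the Taylor expansion of their targets $B_{\ell,j}$ to order $k-2j-1$. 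These are precisely the hypotheses of Lemma~\ref{OR:lem:ApproxLemma}, so it applies and gives
\begin{align*}
\|\partial_t^\ell u_k(0,\cdot)-\partial_t^\ell u(0,\cdot)\|_{H^s}\leq C\,\varepsilon^{-\ell}\cdot\varepsilon^{k/2-s}=C_{\ell,s}\,\varepsilon^{k/2-\ell-s}.
\end{align*}

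For the wave equation, I would combine two instances of the hyperbolic estimate with $m=2$. Since
$\|v\|_E \leq C\varepsilon\bigl(\|v_t\|_{L^2}+\|\nabla v\|_{L^2}\bigr)$,
choosing $(\ell,s)=(0,1)$ controls $\varepsilon\|\nabla(u-u_k)(0,\cdot)\|_{L^2}\leq C\varepsilon\cdot\varepsilon^{k/2-1}=C\varepsilon^{k/2}$, and $(\ell,s)=(1,0)$ controls $\varepsilon\|(u-u_k)_t(0,\cdot)\|_{L^2}\leq C\varepsilon^{k/2}$, giving $\|u(0,\cdot)-u_k(0,\cdot)\|_E\leq C\varepsilon^{k/2}$. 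For the Schr\"odinger equation the same recipe works even more directly: the $t=0$ superposition constructed in \sect{SchrodingerEq} has the form required by Lemma~\ref{OR:lem:ApproxLemma} with no $\varepsilon^{-\ell}$ prefactor, because $x(0;z)=z$, $p(0;z)=\nabla\Phi(z)$, $M(0;z)=\partial_y^2\Phi(z)+i\,\mathrm{Id}$, $\phi_\beta(0;z)=\Phi_\beta(z)$, and $a_{j,\beta}(0;z)=A_{j,\beta}(z)$; invoking the lemma with $s=0$ yields $\|u(0,\cdot)-u_k(0,\cdot)\|_{L^2}\leq C\varepsilon^{k/2}$.

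The only nontrivial step is the structural verification at $t=0$, i.e.\ that each $\partial_t^\ell u_k(0,y)$ is, up to exponentially small corrections, an integral of the type handled by Lemma~\ref{OR:lem:ApproxLemma}. The bulk of this is the Vandermonde-based initialization already carried out in \sect{SuperpositionsOfGB}, which was designed precisely so that the Taylor-polynomial amplitudes $b_{\ell,j}$ match $B_{\ell,j}$ to the right order; the $\Ordo(\varepsilon^\infty)$ contribution from $t$-derivatives hitting $\rho_\eta$ is harmless because those terms are supported on $\{|y-x_\ell(t;z)|\geq\eta\}$, where the imaginary part of $\phi_\ell$ is bounded below by $\delta\eta^2$ and the Gaussian envelope provides exponential decay in $1/\varepsilon$ in every Sobolev norm. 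With those checks in hand, the three conclusions follow immediately as above.
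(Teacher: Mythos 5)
Your proposal is correct and takes essentially the same approach as the paper's proof: both reduce all three estimates to \lem{ApproxLemma}, applying it to the form \eq{hyperbolicInitDataGB} with the $\varepsilon^{-\ell}$ prefactor factored out, bounding the energy norm via $\|v\|_E\leq C\varepsilon\left(\|v_t\|_{L^2}+\|\nabla v\|_{L^2}\right)$ with $(\ell,s)=(0,1)$ and $(1,0)$ for the wave equation, and invoking the lemma directly at $t=0$ for the Schr\"odinger case. Your explicit verification of the phase and amplitude Taylor-matching hypotheses simply spells out what the paper asserts by reference to the construction in \sect{SuperpositionsOfGB}.
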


\begin{proof}
The proof of this theorem for hyperbolic PDEs follows directly from
\lem{ApproxLemma}, since for each power of $\varepsilon$, $\partial_t^{\ell}
u_k(0,y)$ and $\partial_t^{\ell} u(0,y)$ given in \eq{hyperbolicInitDataGB} and
\eq{generalHyperbolicEqIntro}, respectively, are exactly in the assumed form in the
\lem{ApproxLemma}.

The result for the wave equation follows after noting that
\begin{align*}
\|u_k(0,\cdot)-u(0,\cdot)\|_E & \leq C\varepsilon \left(\|u_k(0,\cdot)-u(0,\cdot)\|_{H^1}+ \|\partial_t u_k(0,\cdot) -\partial_t u(0,\cdot)\|_{L^2} \right)\ .
\end{align*}
Similarly, the result for the Schr\"odinger equation follows directly from the definition of the $u_k$ and $u$ at $t=0$ in \sect{SchrodingerEq}.
\end{proof}

\subsection{Proof of \theo{ErrorEstWaveSchrod}}

We prove the results for each type of PDE separately. For the strictly
hyperbolic $m$-th order PDE \eq{generalHyperbolicEqIntro}, applying the
well-posedness estimate given in \theo{wellPosednessAll} to the difference
between the true solution $u$ and the $k$-th order Gaussian beam superposition,
$u_k$, defined in \sect{SuperpositionsOfGB}, we obtain for $t\in[0,T]$,
\begin{align*}
&\sum_{\ell=0}^{m-1}\left\|\partial^{\ell}_t [u(t,\cdot)-u_k(t,\cdot)]\right\|_{H^{m-\ell-1}} \\
&\qquad\qquad\leq  
 C(T)\left(\sum_{\ell=0}^{m-1}\left\|\partial^{\ell}_t [u(0,\cdot)-u_k(0,\cdot)]\right\|_{H^{m-\ell-1}} + \int_0^T \left\|P[u_k](\tau,\cdot)\right\|_{L^2}d\tau \right)\ .
\end{align*}
The first term of the right hand side, which represents the difference in the initial data, can be estimated by \theo{InitialData} and the second term, which represents the evolution error, can be estimated by \lem{PuInTermsOfQ} to obtain
\begin{align*}
&\sum_{\ell=0}^{m-1}\left\|\partial^{\ell}_t [u(t,\cdot)-u_k(t,\cdot)]\right\|_{H^{m-\ell-1}} \\
&\qquad\qquad\leq  
 C(T)\left( \varepsilon^{\frac{k}{2}-m+1} + \sum_{j=1}^{J} \varepsilon^{\ell_j} \sup_{t\in[0,T]} \left\|\Op_{\alpha_j,g_j,\eta}\right\|_{L^2} \right) + \Ordo(\varepsilon^\infty)\ ,
\end{align*}
with $\ell_j\geq (k/2-m+1)$
and $\Op_{\alpha_j,g_j,\eta}$ satisfying (A1)-(A5), for small enough
$\eta$ when $k>1$.
Thus, using \theo{QL2norm}, we obtain
\begin{align*}
\sum_{\ell=0}^{m-1}\left\|\partial^{\ell}_t [u(t,\cdot)-u_k(t,\cdot)]\right\|_{H^{m-\ell-1}} \leq  
 C(T)\varepsilon^{\frac{k}{2}-m+1} \ ,
\end{align*}
which completes the proof for strictly hyperbolic PDEs. Since the wave equation
is a second order strictly hyperbolic PDE, applying the above estimate to
\eq{WaveEquationIntro}, we obtain for $t\in[0,T]$,
\begin{align*}
 \|u(t,\cdot)-u_k(t,\cdot)\|_E \leq \varepsilon\sum_{\ell=0}^{1}\left\|\partial^{\ell}_t [u(t,\cdot)-u_k(t,\cdot)]\right\|_{H^{1-\ell}} \leq  
 C(T)\varepsilon^{\frac{k}{2}} \ ,
\end{align*}
which completes the proof of \theo{ErrorEstWaveSchrod} for the wave equation.

For the Schr\"odinger equation \eq{SchrodingerEquationIntro}, applying the
well-posedness estimate given in \theo{wellPosednessAll} to the difference
between the true solution $u$ and the $k$-th order Gaussian beam superposition,
$u_k$, defined in \sect{SchrodingerEq}, we obtain for $t\in[0,T]$,
\begin{align*}
    \|u_k(t,\cdot)-u(t,\cdot)\|_{L^2} \leq \|u_k(0,\cdot)-u(0,\cdot)\|_{L^2} + \frac{1}{\varepsilon} \int_{0}^{T} \|P^\varepsilon[u_k](\tau, \cdot)\|_{L^2}d\tau \ .
\end{align*}
The initial data part of the right hand side can be estimated by \theo{InitialData} to obtain $\|u(0,\cdot)-u_k(0,\cdot)\|_{L^2} \leq C\varepsilon^{\frac{k}{2}}$. With the help of \lem{PuInTermsOfQ}, we can estimate the second part of the right hand side as
\begin{align*}
\frac{1}{\varepsilon} \int_{0}^{T} \|P^\varepsilon[u_k](\tau, \cdot)\|_{L^2}d\tau & \leq  \sum_{j=1}^{J} 
\varepsilon^{\ell_j -1} \sup_{t \in [0, T]} \|\Op_{\alpha_j,g_j,\eta}\|_{L^2} +\Ordo(\varepsilon^\infty) \ ,
\end{align*}
with $\ell_j\geq (k/2+1)$
and, as above, $\Op_{\alpha_j,g_j,\eta}$ satisfying (A1)-(A5), for small enough
$\eta$ when $k>1$.  Again, using \theo{QL2norm} and combining, we obtain,
\begin{align*}
 \|u_k(t,\cdot)-u(t,\cdot)\|_{L^2} & \leq C\varepsilon^\frac{k}{2} + \varepsilon^{\frac{k}{2}}\sum_{j=1}^{J}C_{\alpha}(T) +\Ordo(\varepsilon^\infty) \leq C(T)\varepsilon^\frac{k}{2} \ . 
\end{align*}
Thus, the proof of \theo{ErrorEstWaveSchrod} is complete.


\section{Norm Estimates of $\Op_{\alpha,g,\eta}$}\lbsec{NormEstQ}

In this section we prove \theo{QL2norm}. We follow the ideas in
\cite{BougachaEtal:09,RousseSwart:07} to relate the estimate of the oscillatory integral to the
operator norm, through the use of an adjoint operator. A key ingredient in
estimating the operator norm is the non-squeezing lemma (\lem{nonsqueezing}), which
allows us to obtain a dimensionally independent estimates for the oscillatory
integral operator.

\subsection{Operator Norm Estimates of $\Op_{\alpha,g,\eta}$}\lbsec{NormEstL2}

We let $\Op_{\alpha,g,\eta}^*$ be the adjoint operator and consider the
squared expression,
\begin{align}
\lbeq{Tu}
&(\Op_{\alpha,g,\eta}^*\Op_{\alpha,g,\eta}u)(t,z) \notag \\
&\qquad= \varepsilon^{-n-|\alpha|} \int_{\Real^n\times K_0} u(z') 
      e^{i\phi(t,y-x(t;z);z')/\varepsilon}\overline{e^{i\phi(t,y-x(t;z);z)/\varepsilon}}
      g(t,y;z')\overline{g(t,y;z)} \notag \\ 
      & \qquad\qquad\qquad\qquad \times (y-x(t;z'))^\alpha (y-x(t;z))^\alpha \rho_\eta(y-x(t;z'))\rho_\eta(y-x(t;z)) dy dz' \notag \\
      &\qquad:= \varepsilon^{-n-|\alpha|} \int_{K_0} I_{\alpha,g}^\varepsilon(t,z,z')u(z')dz',
\end{align}
where
\begin{align*}
 I_{\alpha,g}^\varepsilon(t,z,z') &= \int_{\Real^n} e^{i\phi(t,y-x(t;z);z')/\varepsilon}\overline{e^{i\phi(t,y-x(t;z);z)/\varepsilon}} g(t,y;z')\overline{g(t,y;z)} \\
    &\qquad\qquad  \times  (y-x(t;z'))^\alpha (y-x(t;z))^\alpha \rho_\eta(y-x(t;z'))\rho_\eta(y-x(t;z)) dy \\
  &=  \int_{\Real^n} e^{i\psi(t,y,z,z')/\varepsilon} 
       g(t,y+\bar{x};z')\overline{g(t,y+\bar{x};z)} \\
    &\qquad\qquad  \times  \left(y-\Delta x\right)^\alpha\left(y+\Delta x\right)^\alpha
       \rho_\eta\left(y-\Delta x\right)\rho_\eta\left(y+\Delta x\right) dy\ ,
\end{align*}
after a change of variables and 
\begin{eqnarray*}
 \bar{x} =\bar{x}(t,z,z')&:=&\frac{x(t;z)+x(t;z')}{2}\ ,\\
 \Delta x =\Delta x(t,z,z')&:=&\frac{x(t;z)-x(t;z')}{2}\ , \\
 \psi(t,y,z,z') &:=& \phi(t,y+\Delta x;z') -\overline{\phi(t;y-\Delta x;z)}\ .
\end{eqnarray*}
This symmetrization will simplify expressions later on.

Recall Schur's lemma:
\begin{lemma}[Schur]
For integrable kernels $K(x,y)$,
\begin{align*}
  \left\|\int K(x,y)u(x)dx \right\|_{L^2}^2\leq \left(\sup_x \int|K(x,y)|dy \right)\left(
  \sup_y \int|K(x,y)|dx \right)||u||_{L^2}^2\ .
\end{align*}
\end{lemma}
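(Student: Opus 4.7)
The plan is to use the classical $TT^{*}$-flavoured trick of splitting $|K|$ into two equal square-root factors and then applying the Cauchy--Schwarz inequality in the $x$-variable, followed by Fubini. Write $Tu(y) := \int K(x,y)u(x)\,dx$. The goal is an $L^{2}$-bound on $Tu$ under the assumption that the two ``mixed'' $L^{1}$-norms of $K$, namely $\sup_y\int |K(x,y)|\,dx$ and $\sup_x\int |K(x,y)|\,dy$, are finite.

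First, I would factor the kernel inside the integral defining $Tu(y)$ as $|K(x,y)|^{1/2}\cdot |K(x,y)|^{1/2}u(x)$ (with the remaining phase factor $K/|K|$ absorbed by the modulus). Applying Cauchy--Schwarz in $x$ to this product then yields, pointwise in $y$,
\begin{equation*}
 |Tu(y)|^{2} \leq \left(\int |K(x,y)|\,dx\right)\left(\int |K(x,y)|\,|u(x)|^{2}\,dx\right).
\end{equation*}
The first factor is immediately controlled by $\sup_y\int|K(x,y)|\,dx$, which is one of the two suprema appearing in the conclusion.

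Next, I would integrate the resulting pointwise bound over $y\in\mathbb{R}^{n}$, pull the supremum out in front, and swap the order of integration by Fubini (legitimate since the integrand is nonnegative). The inner integral then becomes $\int |K(x,y)|\,dy$, which is bounded by the second supremum $\sup_x\int|K(x,y)|\,dy$ uniformly in $x$. Pulling that supremum out leaves a factor $\int|u(x)|^{2}\,dx=\|u\|_{L^{2}}^{2}$, producing exactly the claimed inequality.

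There is essentially no obstacle here: the proof is a two-line application of Cauchy--Schwarz and Fubini, and the only (trivial) care needed is ensuring that $|K(x,y)|$ is measurable and nonnegative so that Tonelli/Fubini applies without integrability subtleties, and that the splitting $|K|^{1/2}\cdot|K|^{1/2}$ is well-defined a.e. These are automatic under the stated hypothesis that $K$ is integrable with the two finite mixed $L^{1}$-norms.
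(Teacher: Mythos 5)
Your proof is correct and complete: the splitting $|K|=|K|^{1/2}\cdot|K|^{1/2}$, Cauchy--Schwarz in $x$, and Tonelli in $y$ give exactly the stated bound, and nonnegativity of $|K|$ makes the interchange of integrals unconditional. Note that the paper itself offers no proof to compare against --- it simply recalls Schur's lemma as a classical fact before applying it to $\Op_{\alpha,g,\eta}^*\Op_{\alpha,g,\eta}$ --- so your argument, which is the standard textbook one, fills that gap exactly as intended.
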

Using Schur's lemma, we can now deduce that
\begin{align*}
  ||\Op_{\alpha,g,\eta}||_{L^2}^2 &= 
      \sup_{w\in L_2(K_0)}\frac{\langle w,\Op_{\alpha,g,\eta}^*\Op_{\alpha,g,\eta}w\rangle}{||w||_{L^2}^2}
    \leq \sup_{w\in L_2(K_0)} \frac{||\Op_{\alpha,g,\eta}^*\Op_{\alpha,g,\eta}w||_{L^2}}{||w||_{L^2}} \\
   &\leq \varepsilon^{-n-|\alpha|}\left(\sup_{z\in K_0} \int_{K_0} |I_{\alpha,g}^\varepsilon(t,z,z')| dz'\right)^\frac12 \left(\sup_{z'\in K_0} \int_{K_0} |I_{\alpha,g}^\varepsilon(t,z,z')| dz\right)^\frac12\\
   &\leq \varepsilon^{-n-|\alpha|}\left(\sup_{z\in K_0} \int_{K_0} |I_{\alpha,g}^\varepsilon(t,z,z')| dz'\right)
\end{align*}
 upon noting that $|I_{\alpha,g}^\varepsilon(t,z,z')|=|{I_{\alpha,g}^\varepsilon(t,z',z)}|$.

Before continuing, we need some utility results.

\subsubsection{Utility results}

We will prove a few general results that will be useful in the proof of \theo{QL2norm}. 

\begin{lemma}[Phase estimate]\lblem{PhaseEst} Let $\eta$ be the same as in assumption (A4). Then, under the assumptions (A2)--(A4), $t\in[0,T]$, and $y$ such that $|y\pm\Delta{x}|\leq 2\eta$ (or all $y$ if $\eta=\infty$), we have:
\begin{itemize}
\item For all $z,z'\in K_0$, there exists a constant $\delta$ independent of $t$ such that
$$
\Im \psi\left(t,y,z,z'\right) \geq\  \frac12\delta\left[\left|y+\Delta x\right|^2+\left|y-\Delta x\right|^2\right]\ =\ \delta|y|^2+\frac14\delta|x(t;z)-x(t;z')|^2\ .
$$
\item For $|x(t;z)-x(t;z')|\leq \theta |z-z'|$, 
\begin{align*}
 \inf_{y\in\Omega(t,\mu)}|\nabla_y\psi(t,y,z,z')| \geq C(\theta,\mu)|z-z'|\ ,
\end{align*}
where $\Omega(t,\mu)=\{y : |y-\Delta x|\leq 2\mu \mbox{ and } |y+\Delta x|\leq 2\mu
\}$ and $C(\theta,\mu)$ is independent of $t$ and positive if $\theta$ and
$\mu<\eta$ are sufficiently small.
\end{itemize}
\end{lemma}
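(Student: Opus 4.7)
For the imaginary-part bound, I would compute $\Im\psi$ directly. Since $\overline{\phi(t,y-\Delta x;z)} = \Re\phi(t,y-\Delta x;z) - i\,\Im\phi(t,y-\Delta x;z)$, one has $\Im\psi(t,y,z,z') = \Im\phi(t,y+\Delta x;z') + \Im\phi(t,y-\Delta x;z)$. For $y$ with $|y\pm\Delta x|\leq 2\eta$, assumption (A4) gives $\Im\phi(t,y\pm\Delta x;z^\ast)\geq \delta|y\pm\Delta x|^2$ for both choices, so summing and applying the parallelogram identity $|y+\Delta x|^2+|y-\Delta x|^2 = 2|y|^2+2|\Delta x|^2$, together with $2|\Delta x|^2 = \tfrac12|x(t;z)-x(t;z')|^2$, yields the claimed inequality (the $\tfrac12$ factor in the statement is conservative).

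For the gradient bound I would use $|\nabla_y\psi|\geq |\nabla_y\Re\psi|$ and bound the latter from below. Setting $f(y,z) := \nabla_y\Re\phi(t,y;z)$, I add and subtract $f(0,z)$ and $f(0,z')$ to get
\begin{align*}
\nabla_y\Re\psi = \bigl[f(0,z')-f(0,z)\bigr] + \bigl[f(y+\Delta x,z')-f(0,z')\bigr] - \bigl[f(y-\Delta x,z)-f(0,z)\bigr].
\end{align*}
Since $\nabla_y\phi(t,0;z)$ is real by (A3), the main bracket equals $\nabla_y\phi(t,0;z')-\nabla_y\phi(t,0;z)$, and (A3) together with the hypothesis $|x(t;z)-x(t;z')|\leq \theta|z-z'|$ gives $|f(0,z')-f(0,z)|\geq (C-\theta)|z-z'|$. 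Each perturbation term is then expanded by Taylor to first order as $D_y f(0,z^\ast)(y\pm\Delta x) + r(y\pm\Delta x,z^\ast)$ with $|r(\xi,z^\ast)|\leq C|\xi|^2$, and the linear pieces are regrouped as $[D_y f(0,z')-D_y f(0,z)]\,y + [D_y f(0,z')+D_y f(0,z)]\,\Delta x$. By the $z$-smoothness in (A2) the first factor is controlled by $C|z-z'|\,|y|\leq C\mu|z-z'|$, and the second by $C|\Delta x|\leq C\theta|z-z'|$.

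The remainder difference $r(y+\Delta x,z')-r(y-\Delta x,z)$ is split into a pure $z$-shift at fixed $y+\Delta x$ (bounded by $C|y+\Delta x|^2|z-z'|\leq C\mu^2|z-z'|$ using smoothness of $r$ in $z$) and a pure $y$-shift at fixed $z$ (bounded via $|\partial_y r|\leq C|\xi|$ by $C\mu|\Delta x|\leq C\mu\theta|z-z'|$). Collecting all contributions,
\begin{align*}
|\nabla_y\Re\psi|\;\geq\; |z-z'|\bigl[\,C - \theta - C\mu - 2C\theta - C(\mu^2+\mu\theta)\,\bigr],
\end{align*}
uniformly in $t\in[0,T]$ and in $y\in\Omega(t,\mu)$, and the bracket is a positive constant $C(\theta,\mu)$ for all sufficiently small $\theta$ and $\mu<\eta$. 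The main obstacle is ensuring that the quadratic Taylor remainders produce only terms proportional to $|z-z'|$ rather than an absolute $O(\mu^2)$ error, which would destroy the bound as $|z-z'|\to 0$; this is exactly why the remainder difference must be decomposed to exploit both $|\Delta x|\leq \theta|z-z'|$ and smoothness of $r$ in the parameter $z$.
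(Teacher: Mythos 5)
Your proposal is correct and follows essentially the same route as the paper: the first bullet is proved identically (apply (A4) to both terms of $\Im\psi=\Im\phi(t,y+\Delta x;z')+\Im\phi(t,y-\Delta x;z)$ and use the parallelogram identity, with the factor $\tfrac12$ deliberately conservative), and for the second bullet both arguments reduce via $|\nabla_y\psi|\geq|\Re\nabla_y\psi|$, isolate the main term $\nabla_y\phi(t,0;z')-\nabla_y\phi(t,0;z)$ bounded below through (A3) and the hypothesis $|x-x'|\leq\theta|z-z'|$, and absorb all perturbations as $\Ordo\bigl((\mu+\theta)|z-z'|\bigr)$. The only difference is bookkeeping: where you Taylor-expand to second order and must carefully split the quadratic remainder difference into a $z$-shift and a $y$-shift (correctly, to keep every error proportional to $|z-z'|$), the paper instead telescopes the perturbations into a $y$-increment difference bounded by $C\mu|z-z'|$ via the fundamental theorem of calculus applied to the Hessian difference $\partial_y^2\phi(t,sy;z')-\partial_y^2\phi(t,sy;z)$, plus two $\Delta x$-shifts bounded by $C|\Delta x|\leq C\theta|z-z'|$ using Lipschitz continuity of $\nabla_y\phi$ in $y$ --- which sidesteps the remainder issue you flag, but arrives at exactly the same constant $C(\theta,\mu)$ and conclusion.
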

\begin{proof}
By assumption (A4), there exists a constant $\delta$ independent of $t$ such that
\begin{align*}
\Im\psi\left(t,y,z,z'\right) 
  &=   \Im\phi(t,y+\Delta x;z') + \Im\phi(t,y-\Delta x;z) 
  \geq\delta \left(|y+\Delta x|^2 + |y-\Delta x|^2 \right) \\
  &=   \delta\left[\left|y+\frac{x-x'}{2}\right|^2+\left|y-\frac{x-x'}{2}\right|^2\right]  = 2\delta|y|^2 + \frac12\delta|x-x'|^2\ .
\end{align*}
For convenience, we divide by $1/2$ to eliminate the factor in front of $\delta|y|^2$. 

For the second result, we have
\begin{align*}
 |\nabla_y\psi(t,y,z,z')| 
  \geq{}& |\Re\nabla_y\psi(t,y,z,z')| \\
  ={}& |\Re\nabla_y\phi(t,y+\Delta x;z')-\Re\nabla_y\phi(t,y-\Delta x;z)| \\
  ={}& \Bigl|\Re\nabla_y\phi(t,0;z')-\Re\nabla_y\phi(t,0;z)\\
  & +
   \Re\nabla_y\phi(t,y;z') - \Re\nabla_y\phi(t,0;z') - [\Re\nabla_y\phi(t,y;z) - \Re\nabla_y\phi(t,0;z)] \\
  & +\Re\nabla_y\phi(t,y+\Delta x;z')-\Re\nabla_y\phi(t,y;z')  \\
  & -[\Re\nabla_y\phi(t,y-\Delta x;z)-\Re\nabla_y\phi(t,y;z)]\Bigr|\\
  \geq{}& |\Re\nabla_y\phi(t,0;z')-\Re\nabla_y\phi(t,0;z)|\\
  & -
  \left| \left(\nabla_y\phi(t,y;z') - \nabla_y\phi(t,0;z')\right) - 
         \left(\nabla_y\phi(t,y;z) - \nabla_y\phi(t,0;z)\right) \right|\\
  & -|\nabla_y\phi(t,y+\Delta x;z')-\nabla_y\phi(t,y;z')|  \\
  & -|\nabla_y\phi(t,y-\Delta x;z)-\nabla_y\phi(t,y;z)|\\
  =:{}& E_1 - E_2 - E_3^+-E_3^-.
\end{align*}
Using assumption (A3) we have for $E_1$,
\begin{align*}
  E_1&=|\Re\nabla_y\phi(t,0;z')-\Re\nabla_y\phi(t,0;z)|=
  |\nabla_y\phi(t,0;z')-\nabla_y\phi(t,0;z)|\\
  &\geq C|z-z'| - |x-x'|,
\end{align*}
where $x=x(t;z)$ and $x'=x(t;z')$.
To estimate $E_2$, we
first note that on $\Omega(t,\mu)$,
\begin{align*}
 |y|=\left|\frac12 y - \frac12\Delta x + \frac12 y +\frac12\Delta x \right| \leq \frac12\left( |y-\Delta x| + |y+\Delta x|\right)\leq 2 \mu \ .
\end{align*}
Then, by the Fundamental Theorem of Calculus and the smoothness assumption (A2), for $y\in\Omega(t,\mu)$ we have,
\begin{align*}
 E_2 = \left| \int_0^1 \left[ \partial_y^2\phi(t,sy;z') - \partial_y^2\phi(t,sy;z) \right] y ds \right|\leq C|z-z'||y| \leq C_1\mu|z-z'| \ ,
\end{align*}
with $C_1$ independent of $t\in[0,T]$. Similarly, with $y\in\Omega(t,\mu)$ and
$C_2$ independent of $t\in[0,T]$ and $s\in K_0$,
\begin{align*}
 \left|\nabla_y\phi(t,y\pm\Delta x;s) - \nabla_y\phi(t,y;s)\right| \leq C_2 |\Delta x| =\frac12 C_2|x-x'|\ ,
\end{align*}
which shows that $E_3^++E_3^-\leq C_2|x-x'|$.
 Using these estimates for the case $|x-x'|\leq \theta |z-z'|$ we then obtain
\begin{align*}
 |\nabla_y\psi(t,y,z,z')| 
  &\geq C|z-z'| - |x-x'| - C_1\mu|z-z'| - C_2|x-x'|  \\
  &\geq C|z-z'| - (1+C_2)\theta|z-z'| - C_1\mu|z-z'| \\
  &=: C(\theta,\mu)|z-z'|\ ,
\end{align*}
where $C(\theta,\mu)$ is independent of $t\in[0,T]$ and positive if $\theta$ and $\mu$ are small enough.
\end{proof}

Next we have a version of the non-stationary phase lemma.

\begin{lemma}[Non-stationary phase lemma]\lblem{statphase}
Suppose that $u(y;\zeta)\in C_0^\infty(D\times Z)$, where $D$ and $Z$ are
compact sets and ${\psi(y;\zeta)\in C^\infty(O)}$ for some open neighborhood $O$
of $D\times Z$. If $\nabla_y\psi$ never vanishes in $O$, then for any
$K=0,1,\ldots$,
\begin{align*}
   \left|\int_D u(y;\zeta)e^{i\psi(y;\zeta)/\varepsilon}dy \right|
   \leq C_K \varepsilon^K \sum_{|\alpha|\leq K}\int_D \frac{|\partial^{\alpha}u(y;\zeta)|}{|\nabla_y\psi(y;\zeta)|^{2K-|\alpha|}} e^{-\Im \psi(y;\zeta)/\varepsilon}dy\ ,
\end{align*}
where $C_K$ is a constant independent of $\zeta$.
\end{lemma}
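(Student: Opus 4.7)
The plan is to apply the standard non-stationary phase technique, adapted to a complex-valued phase, by constructing a first-order differential operator that fixes $e^{i\psi/\varepsilon}$ and then integrating by parts $K$ times. Define on a neighborhood of $\supp u$ the complex vector field
\begin{equation*}
  L := -i\varepsilon\sum_{i=1}^{n}\frac{\overline{\partial_{y_i}\psi}}{|\nabla_y\psi|^2}\,\partial_{y_i},
  \qquad
  a_i := -i\varepsilon\,\frac{\overline{\partial_{y_i}\psi}}{|\nabla_y\psi|^2},
\end{equation*}
which is well-defined since $|\nabla_y\psi|>0$ on the compact set where $u$ is supported. A direct computation using $\sum_i\overline{\partial_{y_i}\psi}\,\partial_{y_i}\psi=|\nabla_y\psi|^2$ gives $L e^{i\psi/\varepsilon}=e^{i\psi/\varepsilon}$, and hence $L^{K}e^{i\psi/\varepsilon}=e^{i\psi/\varepsilon}$ for every $K=0,1,\ldots$

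Next, because $u(\cdot;\zeta)$ is compactly supported in $D$, integration by parts produces no boundary contributions, so iterating $K$ times yields
\begin{equation*}
  \int_D u\,e^{i\psi/\varepsilon}\,dy \;=\; \int_D u\,L^{K} e^{i\psi/\varepsilon}\,dy \;=\; \int_D (L^{*})^{K} u\;e^{i\psi/\varepsilon}\,dy,
\end{equation*}
where $L^{*}u=-\sum_i\partial_{y_i}(a_i u)$ is the formal adjoint.

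The crux is then an induction on $K$ establishing the pointwise bound
\begin{equation*}
  \bigl|(L^{*})^{K} u\bigr| \;\leq\; C_K\,\varepsilon^{K}\sum_{|\alpha|\le K}\frac{|\partial^{\alpha} u|}{|\nabla_y\psi|^{2K-|\alpha|}}.
\end{equation*}
For the base case $K=0$ this is trivial. For the inductive step, I would observe that $|a_i|\le\varepsilon/|\nabla_y\psi|$ and that $|\partial_{y_j}|\nabla_y\psi|^{2}|\le C|\nabla_y\psi|$, where $C$ is a sup-norm of finitely many second derivatives of $\psi$ on $D\times Z$. Consequently each application of $L^{*}$ adds one factor of $\varepsilon$ and one differentiation; that differentiation either falls on $u$ (increasing $|\alpha|$ by one and simultaneously adding a factor $|\nabla_y\psi|^{-1}$), or falls on a coefficient of the form $c_\alpha/|\nabla_y\psi|^{2K-|\alpha|}$, producing a bounded quantity times one additional factor of $|\nabla_y\psi|^{-1}$ in the denominator. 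In either case the exponent structure $2K-|\alpha|$ is correctly promoted to $2(K+1)-|\alpha'|$, and all accumulated constants depend only on $\sup_{D\times Z}|\partial^{\beta}\psi|$ for $|\beta|\le K+1$, hence are independent of $\zeta$.

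Finally, taking the absolute value inside the integral and using $|e^{i\psi/\varepsilon}|=e^{-\Im\psi/\varepsilon}$ gives the stated inequality. The main obstacle is simply the bookkeeping of the induction: keeping careful track of how derivatives landing on the rational expressions $\overline{\partial_{y_i}\psi}/|\nabla_y\psi|^{2}$ (or on previously accumulated coefficients) interact with the exponent $2K-|\alpha|$, and ensuring that the numerator factors introduced by differentiating $|\nabla_y\psi|^{2}$ can always be absorbed into the denominator at the cost of exactly one additional power of $|\nabla_y\psi|^{-1}$.
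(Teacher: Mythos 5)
Your proposal is correct and is essentially the paper's approach: the paper's ``proof'' consists of citing Lemma~7.7.1 of \cite{HormanderI} and asserting it can be modified for a complex phase, and your argument is exactly that classical modification — the operator $L$ with $Le^{i\psi/\varepsilon}=e^{i\psi/\varepsilon}$ (valid since $\sum_i\overline{\partial_{y_i}\psi}\,\partial_{y_i}\psi=|\nabla_y\psi|^2$ for complex $\psi$), iterated integration by parts with no boundary terms, an inductive pointwise bound on $(L^*)^K u$, and finally $|e^{i\psi/\varepsilon}|=e^{-\Im\psi/\varepsilon}$ kept inside the integral. The only point you gloss is that when a derivative lands on a numerator factor that is already a second-or-higher derivative of $\psi$ the extra power of $|\nabla_y\psi|^{-1}$ must be absorbed via $1\leq C|\nabla_y\psi|^{-1}$, using that $|\nabla_y\psi|$ is bounded above on the compact set $D\times Z$ — a harmless step that, like all your constants, depends only on finitely many sup-norms of derivatives of $\psi$ and hence is independent of $\zeta$.
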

\begin{proof}
This is a classical result. A proof can be obtained by modifying the proof of
Lemma~7.7.1 of \cite{HormanderI}. However, we omit the details for the sake of
brevity.
\end{proof}

With this lemma in hand we can estimate $|I^\varepsilon_{\alpha,g}(t,z,z')|$.

\begin{lemma}\lblem{Ilemma}
Under the assumptions (A2)--(A5), for any $K=0,1,\ldots$,
fixed $0<\mu<\eta\leq\infty$, $s>0$ 
and $t\in[0,T]$, there are constants $C_K$ and $C_s$ independent of $t$ such that
\be{Ilemmares}
|I^\varepsilon_{\alpha,g}(t,z,z')| \leq C_K \varepsilon^{n/2+|\alpha|} 
                 \frac{\exp\left(-\frac{\delta|\Delta x|^2}{\varepsilon}\right)}
                {1+\inf_{y\in\Omega(t,\mu)}|\nabla_y\psi(t,y,z,z')/\sqrt\varepsilon|^{K}} 
                + C_s\varepsilon^s\ ,
\ee
where $\Omega(t,\mu)=\{y : |y-\Delta x|\leq 2\mu \mbox{ and } |y+\Delta x|\leq 2\mu \} \subseteq \{y : |y|\leq 2\mu\}$ is a compact set.
\end{lemma}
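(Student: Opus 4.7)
The plan is to split the integral into a piece where the phase estimate of \lem{PhaseEst} applies tightly and a remainder that is exponentially small. Concretely, I would introduce $\chi\in C_0^\infty(\Real^n)$ with $\chi(w)=1$ for $|w|\le 3\mu/2$ and $\chi(w)=0$ for $|w|\ge 2\mu$, and decompose
\[
\rho_\eta(y-\Delta x)\rho_\eta(y+\Delta x)=\chi(y-\Delta x)\chi(y+\Delta x)+R(y,z,z'),
\]
writing $I^\varepsilon_{\alpha,g}=I_{\rm in}+I_{\rm out}$. The product $\chi(y-\Delta x)\chi(y+\Delta x)$ is supported in $\Omega(t,\mu)$, whereas on the support of $R$ at least one of $|y\pm\Delta x|$ exceeds $3\mu/2$, so the first assertion of \lem{PhaseEst} gives $\Im\psi\ge 9\delta\mu^2/4$ there. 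Combined with the uniform boundedness of the total support (inherited from the $\rho_\eta$ factors and the compactness of $K_0$), this yields $|I_{\rm out}|\le C_s\varepsilon^s$ for any $s>0$.

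For $I_{\rm in}$, I would use $\Im\psi\ge 2\delta|y|^2+2\delta|\Delta x|^2$ on $\Omega(t,\mu)$ and split
\[
e^{-\Im\psi/\varepsilon}\le e^{-\delta|\Delta x|^2/\varepsilon}\;e^{-\delta|y|^2/\varepsilon}\;e^{-\delta(|y|^2+|\Delta x|^2)/\varepsilon}.
\]
The first factor becomes the exponential appearing in \eq{Ilemmares}, the second will rescale the $y$-integration to yield $\varepsilon^{n/2}$, and the third will absorb the $|\Delta x|$-powers coming from the polynomial weight. Using $|(y-\Delta x)^\alpha(y+\Delta x)^\alpha|\le C(|y|+|\Delta x|)^{2|\alpha|}$ and expanding binomially, each term $|y|^k|\Delta x|^{2|\alpha|-k}$ is controlled via the pointwise bounds $|y|^k e^{-\delta|y|^2/\varepsilon}\le C\varepsilon^{k/2}$ and $|\Delta x|^{2|\alpha|-k}e^{-\delta|\Delta x|^2/\varepsilon}\le C\varepsilon^{|\alpha|-k/2}$, the latter drawn from the third factor in the split above; summing and integrating the remaining $e^{-\delta|y|^2/\varepsilon}$ yields the basic estimate
\[
|I_{\rm in}|\le C\,\varepsilon^{n/2+|\alpha|}\,e^{-\delta|\Delta x|^2/\varepsilon},
\]
which already proves \eq{Ilemmares} whenever $N:=\inf_{\Omega(t,\mu)}|\nabla_y\psi|/\sqrt\varepsilon\le 1$, since $1/(1+N^K)\ge 1/2$ in that regime.

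To produce the denominator factor when $N$ is large, I would apply \lem{statphase} directly to $I_{\rm in}$ with amplitude
\[
u(y)=g(t,y+\bar x;z')\,\overline{g(t,y+\bar x;z)}\,(y-\Delta x)^\alpha(y+\Delta x)^\alpha\,\chi(y-\Delta x)\chi(y+\Delta x).
\]
Selecting in the resulting sum the term with $|\beta|=K$ so that all $K$ derivatives land on $u$, and using that $|\partial^K u|$ obeys the same crude bound $C(1+|y|+|\Delta x|)^{2|\alpha|}$ on its support, the same Gaussian integration as above produces
\[
|I_{\rm in}|\le C_K\,\frac{\varepsilon^K}{\inf_{\Omega(t,\mu)}|\nabla_y\psi|^K}\,\varepsilon^{n/2+|\alpha|}\,e^{-\delta|\Delta x|^2/\varepsilon}=\frac{C_K\,\varepsilon^{K/2}}{N^K}\,\varepsilon^{n/2+|\alpha|}\,e^{-\delta|\Delta x|^2/\varepsilon}.
\]
Since $\varepsilon^{K/2}\le 1$ for $\varepsilon\le 1$, combining with the basic bound via the elementary inequality $\min(1,1/N^K)\le 2/(1+N^K)$ reproduces exactly \eq{Ilemmares}.

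The main technical delicacy is the threefold split of the Gaussian weight: because $|\Delta x|$ may be of order one while the Gaussian localizes $|y|$ at the scale $\sqrt\varepsilon$, the full power $\varepsilon^{|\alpha|}$ can only be extracted from $(y\pm\Delta x)^\alpha$ by spending part of the Gaussian mass in $|\Delta x|$ on the polynomial weights, which is why three separate factors are needed rather than two. The same bookkeeping governs the non-stationary phase step, where derivatives falling on $\chi$ produce boundary pieces supported on the annulus where $\Im\psi$ is bounded below and thus contribute only exponentially small terms absorbed into $C_s\varepsilon^s$.
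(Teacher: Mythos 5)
Your overall strategy coincides with the paper's: localize to the inner region $\Omega(t,\mu)$ with an exponentially small outer remainder, prove the $K=0$ Gaussian bound by spending part of the Gaussian weight on the polynomial factors, invoke \lem{statphase} to produce decay in $\nu:=\inf_{\Omega(t,\mu)}|\nabla_y\psi|/\sqrt{\varepsilon}$, and combine the two regimes via $\min(1,1/a)\leq 2/(1+a)$. Your smooth partition $\chi$ is, if anything, cleaner than the paper's sharp restriction to $\Omega(t,\mu)$, since it makes the amplitude genuinely compactly supported before \lem{statphase} is applied. However, your non-stationary phase step has a genuine gap. First, \lem{statphase} does not allow you to ``select the term with $|\beta|=K$'': its conclusion is a sum over \emph{all} derivative orders $|\beta|\leq K$ with denominators $|\nabla_y\psi|^{2K-|\beta|}$, and each term must be bounded. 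Second, and more seriously, the crude bound $|\partial^K u|\leq C(1+|y|+|\Delta x|)^{2|\alpha|}$ destroys exactly the factor you need. The $K=0$ mechanism $|y|^k|\Delta x|^{j-k}e^{-\delta(|y|^2+|\Delta x|^2)/\varepsilon}\leq C\varepsilon^{j/2}$ only yields $\varepsilon^{j/2}$ from a degree-$j$ monomial, and the constant term ($j=0$) hidden in your ``$1+$'' yields no gain at all; when derivatives fall on $(y-\Delta x)^\alpha(y+\Delta x)^\alpha$ its degree genuinely drops. Hence your bracket integrates only to $C\varepsilon^{n/2}e^{-\delta|\Delta x|^2/\varepsilon}$, not $C\varepsilon^{n/2+|\alpha|}e^{-\delta|\Delta x|^2/\varepsilon}$, and the claimed bound overstates by $\varepsilon^{|\alpha|}$. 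This is not cosmetic: \eq{Ilemmares} is played against the prefactor $\varepsilon^{-n-|\alpha|}$ in the operator-norm estimate, so losing $\varepsilon^{|\alpha|}$ in the large-$\nu$ branch would leave a divergent factor $\varepsilon^{-|\alpha|}$ in the caustic region $D_2$ in the proof of \theo{QL2norm}.

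The repair is the bookkeeping the paper carries out: write $\varepsilon^{K}/|\nabla_y\psi|^{2K-|\beta|}=\varepsilon^{|\beta|/2}\,\nu^{-(2K-|\beta|)}$, split $\partial_y^\beta$ into $\beta_1$ derivatives landing on the polynomial weight and $\beta_2$ on the smooth factors, and bound $\left|\partial_y^{\beta_1}\left[(y-\Delta x)^\alpha(y+\Delta x)^\alpha\right]\right|$ by sums of $|y-\Delta x|^{|\alpha|-|\beta_{11}|}\,|y+\Delta x|^{|\alpha|-|\beta_{12}|}$, so that the Gaussian integration yields $\varepsilon^{n/2+|\alpha|-|\beta_1|/2}$; the deficit $\varepsilon^{-|\beta_1|/2}$ is then exactly covered by the prefactor $\varepsilon^{|\beta|/2}$, giving every term the bound $\varepsilon^{n/2+|\alpha|}\nu^{-(2K-|\beta|)}$, and for $\nu\geq 1$ (the only regime where this branch is used) $\nu^{-(2K-|\beta|)}\leq \nu^{-K}$, which recovers \eq{Ilemmares}. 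One further small point: your justification of $|I_{\mathrm{out}}|\leq C_s\varepsilon^s$ via ``uniform boundedness of the total support'' fails when $\eta=\infty$, where $\rho_\infty\equiv 1$ and the integration runs over all of $\Real^n$; there you should instead spend half of the Gaussian weight $e^{-\delta|y|^2/\varepsilon}$ from \lem{PhaseEst} to make the integral converge (as the paper does for its $I_2$), reserving the lower bound on $\Im\psi$ over the support of $R$ for the exponential smallness. With these corrections your argument becomes a valid variant of the paper's proof.
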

\begin{proof}
By the definition of $I^\varepsilon_{\alpha,g}(t,z,z')$, we have 
\begin{align*}
 I^\varepsilon_{\alpha,g}(t,z,z') &= \int_{\Real^n} e^{i\psi(t,y,z,z')/\varepsilon} g(t,y+\bar{x};z') \overline{g(t,y+\bar{x};z)} \\ 
 & \qquad\qquad\qquad\qquad \times (y-\Delta x)^\alpha (y+\Delta x)^\alpha  \rho_\eta(y+\Delta x) \rho_\eta(y-\Delta x)dy \\
&= \int_{\Omega(t,\mu)} e^{i\psi(t,y,z,z')/\varepsilon} g(t,y+\bar{x};z') \overline{g(t,y+\bar{x};z)} \\ 
 & \qquad\qquad\qquad\qquad \times (y-\Delta x)^\alpha (y+\Delta x)^\alpha  \rho_\eta(y+\Delta x) \rho_\eta(y-\Delta x)dy \\
&\quad+\int_{\Omega(t,\eta)\setminus\Omega(t,\mu)} e^{i\psi(t,y,z,z')/\varepsilon} g(t,y+\bar{x};z') \overline{g(t,y+\bar{x};z)} \\
 & \qquad\qquad\qquad\qquad \times (y-\Delta x)^\alpha (y+\Delta x)^\alpha  \rho_\eta(y+\Delta x) \rho_\eta(y-\Delta x)dy\\
&=: I_1 + I_2.
\end{align*}
The integral $I_1$ will correspond to the first part of the right hand side of the estimate in the lemma and $I_2$ to the second part. We begin estimating $I_1$.
 By \lem{PhaseEst} and (A5), for a fixed $t$, we compute,
\begin{align*}
 \left|I_1\right|
 &\leq C \int_{\Omega(t,\mu)} |y-\Delta x|^{|\alpha|} |y+\Delta x|^{|\alpha|}e^{-\delta(|y-\Delta x|^2+|y+\Delta x|^2)/\varepsilon}dy \ .
\end{align*}
 Now, using the estimate 
$ s^p e^{-as^2} \leq (p/e)^{p/2} a^{-p/2}e^{-as^2/2} \ ,$
with $p=|\alpha|$, $a=\delta/\varepsilon$ and $s=|y-\Delta x|$ or $|y+\Delta x|$, and continuing the estimate of $I_1$, we have for a constant, $C$, independent of $t$, $z$ and $z'$,
\begin{align*}
 \left|I_1\right| &\leq C\left(\frac{\varepsilon}{\delta}\right)^{|\alpha|} \int_{\Omega(t,\mu)}e^{-\frac{\delta}{2\varepsilon}(|y+\Delta x|^2+|y-\Delta x|^2)} \ dy \\
   &\leq C\left(\frac{\varepsilon}{\delta}\right)^{|\alpha|} \int_{\Omega(t,\mu)} e^{-\frac{\delta}{\varepsilon}|y|^2-\frac{\delta}{\varepsilon}|\Delta x|^2} \ dy
    \leq C\varepsilon^{n/2+|\alpha|} e^{-\frac{\delta}{\varepsilon}|\Delta x|^2} \ .
\end{align*}
Thus, we have proved the needed estimate for $I_1$ for the case $K=0$ as well as
the the case $K>0$ when $\inf_{y\in{\Omega(t,\mu)}}|\nabla_y\psi(t,y,z,z')|=0$.
Therefore, in the remainder of the proof we will consider the case $K\neq 0$ and
$\inf_{y\in{\Omega(t,\mu)}}|\nabla_y\psi(t,y,z,z')|\neq 0$. In this case,
\lem{statphase} can be applied to $I_1$ with $\zeta=(t,z,z')\in[0,T]\times K_0\times K_0$ to give,
\begin{align*}
 \left|I_1\right| &\leq C_K\varepsilon^K \sum_{|\beta|\leq K} \int_{\Omega(t,\mu)}\frac{\left|\partial^\beta_y\left[(y-\Delta x)^\alpha(y+\Delta x)^\alpha g'\overline{g}\rho^+_\eta\rho^-_\eta\right]\right|}{|\nabla_y\psi(t,y,z,z')|^{2K-|\beta|}}e^{-\Im\psi(t,y,z,z')/\varepsilon} dy \\
 &\leq C_K \sum_{|\beta|\leq K}\Bigg(\frac{\varepsilon^{|\beta|/2}}{\inf_{y\in{\Omega(t,\mu)}}|\nabla_y\psi/\sqrt{\varepsilon}|^{2K-|\beta|}}
\\ &\qquad\qquad\qquad\qquad\times\int_{\Omega(t,\mu)} \left| \partial^\beta_y\left[(y-\Delta x)^\alpha(y+\Delta x)^\alpha g'\overline{g}\rho^+_\eta\rho^-_\eta\right]\right|e^{-\Im\psi/\varepsilon} dy \Bigg) \\
 &\leq C_K \sum_{|\beta|\leq K}\frac{\varepsilon^{|\beta|/2}}{\nu(t,z,z')^{2K-|\beta|}}
\Bigg(\sum_{\substack{\beta_1+\beta_2=\beta\\ \beta_1\leq2\alpha}}\int_{\Omega(t,\mu)} \left| \partial^{\beta_1}_y\left[(y-\Delta x)^\alpha(y+\Delta x)^\alpha\right]\right|\\
&\qquad\qquad\qquad\qquad\qquad\qquad\qquad\qquad\qquad\qquad\times\left|\partial^{\beta_2}_y\left[ g'\overline{g}\rho^+_\eta\rho^-_\eta\right]\right|e^{-\Im\psi/\varepsilon} dy \Bigg) \ ,
\end{align*}
where $\rho_\eta^\pm = \rho_\eta(y\pm\Delta x)$,
$\nu(t,z,z')=\inf_{y\in{\Omega(t,\mu)}}|\nabla_y\psi(t,y,z,z')/\sqrt{\varepsilon}|$
and $C_K$ is independent of $t$, $z$ and $z'$. By assumption (A5) and since
$\rho_\eta$ is uniformly smooth and $t$, $z$, $z'$ vary in a compact set,
$\left|\partial^{\beta_2}_y\left[
g'\overline{g}\rho^+_\eta\rho^-_\eta\right]\right|$ can be bounded by a constant
independent of $y$, $t$, $z$ and $z'$. We estimate the other term as follows,
\begin{align*}
 \left| \partial_y^{\beta_1} \left[ (y-\Delta x)^\alpha (y+\Delta x)^\alpha \right]\right| 
  &\leq C  \sum_{\substack{\beta_{11}+\beta_{12}=\beta_1\\ \beta_{11},\beta_{12}\leq\alpha}}\left| (y-\Delta x)^{\alpha-\beta_{11}}(y+\Delta x)^{\alpha-\beta_{12}}\right| \\
 &\leq C \sum_{\substack{\beta_{11}+\beta_{12}=\beta_1\\ \beta_{11},\beta_{12}\leq\alpha}}|y-\Delta x|^{|\alpha|-|\beta_{11}|}\ |y+\Delta x|^{|\alpha|-|\beta_{12}|}\ .
\end{align*}
Now, using the same argument as for the $K=0$ case, we have
\begin{align*}
& \int_{\Omega(t,\mu)} \left| \partial^{\beta_1}_y\left[(y-\Delta x)^\alpha(y+\Delta x)^\alpha\right]\right|\left|\partial^{\beta_2}_y\left[ g'\overline{g}\rho^+_\eta\rho^-_\eta\right]\right|e^{-\Im\psi/\varepsilon} dy \\
&\quad\qquad\leq C \sum_{\substack{\beta_{11}+\beta_{12}=\beta_1\\ \beta_{11},\beta_{12}\leq\alpha}}\int_{\Omega(t,\mu)} |y-\Delta x|^{|\alpha|-|\beta_{11}|}\ |y+\Delta x|^{|\alpha|-|\beta_{12}|}e^{-\Im\psi/\varepsilon} dy \\
&\quad\qquad\leq C(\beta_2) \varepsilon^{\frac{n + |\alpha|-|\beta_{11}|+|\alpha|-|\beta_{12}|}{2}}e^{-\frac{\delta}{2\varepsilon}|\Delta x|^2} = C(\beta_2) \varepsilon^{n/2 + |\alpha|-|\beta_1|/2}e^{-\frac{\delta}{\varepsilon}|\Delta x|^2} \ ,
\end{align*}
and consequently,
\begin{align*}
\left|I_1\right|
& \leq C_K \sum_{|\beta|\leq K}\frac{\varepsilon^{|\beta|/2}}{\nu(t,z,z')^{2K-|\beta|}}\sum_{\substack{\beta_1+\beta_2=\beta\\ \beta_1\leq2\alpha}}C(\beta_2) \varepsilon^{n/2 + |\alpha|-|\beta_1|/2}e^{-\frac{\delta}{\varepsilon}|\Delta x|^2} \\
& \leq C_K\varepsilon^{n/2 + |\alpha|}e^{-\frac{\delta}{\varepsilon}|\Delta x|^2}\sum_{|\beta|\leq K}\frac{1}{\nu(t,z,z')^{2K-|\beta|}} \ . 
\end{align*}
Using the fact that $\left|I_1\right|$ will be bounded by the minimum of the $K=0$ and $K>0$ estimates, we have
\begin{align*}
\left|I_1\right|
& \leq C\varepsilon^{n/2 + |\alpha|}e^{-\frac{\delta}{\varepsilon}|\Delta x|^2}\min\left[1,\sum_{|\beta|\leq K}\frac{1}{\nu(t,z,z')^{2K-|\beta|}}\right]\ .
\end{align*}
Noting that for positive $a$, $b$, and $c$,
\begin{align*}
\min[a,b+c]\leq \min[a,b]+\min[a,c] \qquad \mbox{and}\qquad \min[1,1/a]\leq 2/(1+a)\ ,
\end{align*}
we have,
\begin{align*}
 \min\left[1,\sum_{|\beta|\leq K}\frac{1}{\nu(t,z,z')^{2K-|\beta|}}\right] 
 &\leq \sum_{|\beta|\leq K}\min\left[1,\frac{1}{\nu(t,z,z')^{2K-|\beta|}}\right] \\
 &\leq \sum_{|\beta|\leq K}\frac{2}{1+\nu(t,z,z')^{2K-|\beta|}} \leq C_K\frac{1}{1+\nu(t,z,z')^{K}} \ .
\end{align*}
This shows the $I_1$ contribution to the estimate \eq{Ilemmares}. It remains to
show the smallness of $I_2$. Indeed, since either $|y+\Delta x|>\mu$ or
$|y-\Delta x|>\mu$ on $\Omega(t,\eta)\setminus\Omega(t,\mu)$, we get in the same way
as for $I_1$ in the $K=0$ case,
\begin{align*}
 \left|I_2\right| &\leq C\left(\frac{\varepsilon}{\delta}\right)^{|\alpha|} \int_{\Omega(t,\eta)\setminus\Omega(t,\mu)}e^{-\frac{\delta}{2\varepsilon}(|y+\Delta x|^2+|y-\Delta x|^2)} \ dy \leq 
 C\left(\frac{\varepsilon}{\delta}\right)^{|\alpha|+\frac{n}{2}}
 e^{-\frac{\delta\mu}{2\varepsilon}}\leq C_s\varepsilon^s\ ,
\end{align*}
for any $s>0$ with $C_s$ independent of $t$. This concludes the proof of the lemma.
\end{proof}


\subsubsection{Proof of \theo{QL2norm}}\lbsec{L2est}


We now have all of the ingredients to complete the proof of \theo{QL2norm}.
We fix $t\in[0,T]$ and start with the estimate,
\begin{align*} 
  ||\Op_{\alpha,g,\eta}||_{L^2}^2 
   &\leq \varepsilon^{-n-|\alpha|}\left(\sup_z \int_{K_0} |I_{\alpha,g}^\varepsilon(t,z,z')| dz'\right) \ ,
\end{align*}
derived in the beginning of \sect{NormEstL2} and we turn our attention to estimating the integral of $|I^\varepsilon_{\alpha,g}(t,z,z')|$. 

We will use the shorthand notation $I^\varepsilon_D(t,z,z') \equiv \chi_D(z,z')I^\varepsilon_{\alpha,g}(t,z,z')$, where $\chi_D(z,z')$ is the characteristic function on a domain $D\subseteq K_0\times K_0$. We will consider two disjoint subsets of $K_0\times K_0$ given by 
\begin{align*}
 D_1(t,\theta) & = \{(z,z') : |x(t;z)-x(t;z')|\geq \theta|z-z'| \} \\
 D_2(t,\theta) & = \{(z,z') : |x(t;z)-x(t;z')|< \theta|z-z'| \} \ ,
\end{align*} 
where $\theta$ is the small parameter $\theta$ in \lem{PhaseEst}. Note that $D_1\cup D_2=K_0\times K_0$ and $D_1\cap D_2=\emptyset$. Thus,
\begin{align*}
 \int_{K_0}\left|I^\varepsilon_{\alpha,g}(t,z,z')\right| dz' = 
 \int_{K_0}\left|I^\varepsilon_{D_1}(t,z,z')\right| dz' +  \int_{K_0}\left|I^\varepsilon_{D_2}(t,z,z')\right| dz' \ .
\end{align*}
The set $D_1$ corresponds to the non-caustic region of the solution. There, we estimate $I^\varepsilon_{D_1}(t,z,z')$ by taking $K=0$ and $s=n+|\alpha|$ in \lem{Ilemma}.
\begin{align*}
\int_{K_0}\left|I^\varepsilon_{D_1}(t,z,z')\right| dz' 
 &\leq  C \varepsilon^{n/2+|\alpha|}\int_{K_0} e^{-\frac{\delta}{4\varepsilon}|x(t;z)-x(t;z')|^2} +\varepsilon^{n/2} dz'\\
 &\leq  C \varepsilon^{n/2+|\alpha|}\int_{K_0} e^{-\frac{\delta\theta^2}{4\varepsilon}|z-z'|^2}dz'+C|K_0|\varepsilon^{n+|\alpha|}\\
 &\leq  C \varepsilon^{n/2+|\alpha|}\int_0^\infty s^{n-1}e^{-\frac{\delta\theta^2}{4\varepsilon}s^2} ds +C\varepsilon^{n+|\alpha|}\\
&\leq C \varepsilon^{n+|\alpha|}\ .
\end{align*}
The set $D_2$ corresponds to the region near caustics of the solution. On $D_2$, we estimate $I^\varepsilon_{D_2}(t,z,z')$ using \lem{Ilemma} again
where we pick $\mu$ small enough to allow us to use also \lem{PhaseEst}.
Letting $R=\sup_{z,z'\in K_0}|z-z'|< \infty$ be the diameter of $K_0$
and $s=n+|\alpha|$, we compute
\begin{align*}
\int_{K_0}\left|I^\varepsilon_{D_2}(t,z,z')\right| dz' 
&\leq  C \varepsilon^{\frac{n}{2}+|\alpha|}\int_{K_0}\left( \frac{e^{-\frac{\delta}{4\varepsilon}|x(t;z)-x(t;z')|^2}}{1 + \inf_{y\in\Omega(t,\mu)} |\nabla_y\psi(t,y,z,z')/\sqrt{\varepsilon}|^K} +\varepsilon^{\frac{n}{2}}\right)dz'\\
&\leq  C \varepsilon^{\frac{n}{2}+|\alpha|}\int_{K_0} \frac{1}{1 + \left(\frac{C(\theta,\mu)|z-z'|}{\sqrt{\varepsilon}}\right)^K} dz'+ C\varepsilon^{n+|\alpha|}\\
&\leq  C \varepsilon^{\frac{n}{2}+|\alpha|}\int_0^R \frac{1}{1 + (C(\theta,\mu)s/\sqrt{\varepsilon})^K}s^{n-1} ds + C\varepsilon^{n+|\alpha|} \\
&\leq  C \varepsilon^{n+|\alpha|} \ ,
\end{align*}
if we take $K=n+1$.

Since all of the constants are independent of the fixed $t\in[0,T]$, by putting all of these estimate together we obtain
\begin{align*}
  ||\Op_{\alpha,g,\eta}||_{L^2}^2 
\leq \varepsilon^{-n-|\alpha|}\left(\sup_{z\in K_0} \int_{K_0} |I_{\alpha,g}^\varepsilon(t,z,z')| dz'\right) \leq C \ ,
\end{align*}
for all $t\in[0,T]$, which proves the theorem.


\section{Numerical study of convergence} \lbsec{Numerical}

In this section, we perform numerical convergence analyses to study the
sharpness of the theoretical estimates in this paper for the constant
coefficient wave equation with sound speed $c(y)=1$, for which $H(t,x,p)=\pm
|p|$. The ODEs that define the Gaussian beams are solved numerically using an
explicit Runge-Kutta $(4,5)$ method (MATLAB's {\tt ode45}). We use the fast
Fourier transform to obtain the ``exact solution'' and use it to determine the
error in the Gaussian beam solution. When the norms require it, we compute
derivatives via analytical forms rather than numerical differentiation.

\subsection{Single Gaussian Beams}

First, we study the convergence rate for a single Gaussian beam to show
the sharpness of the estimate proved in \cite{Ralston:82}. For $2$D, this
estimate states that for a single Gaussian beam $v_k(t,y)$,
\begin{align*}
  \| (\partial^2_t-c(y)^2\Delta) v_k(t,y) \|_{L^2} \leq C \varepsilon^{k/2-1/2} \ ,
\end{align*}
for $t\in[0,T]$. Using the well-posedness estimate for the wave equation, we
obtain the rescaled energy norm estimate
\begin{align*}
  || v_k(t,\cdot)-u(t,\cdot) ||_E &\leq || v_k(0,\cdot)-u(0,\cdot) ||_E + T\;\varepsilon \sup_{t\in[0,T]}||(\partial^2_t-c(y)^2\Delta) v_k(t,\cdot)||_{L^2} \ ,
\end{align*}
for $t\in[0,T]$, where $u$ is the exact solution to the wave equation. Taking
the initial conditions for $u$ to be the same as the $1$-st, $2$-nd and $3$-rd
order Gaussian beams at $t=0$ (modulo the cutoff function), we obtain the
asymptotic error estimate,
\begin{align*}
  \| v_k(t,\cdot)-u(t,\cdot) \|_E &\leq C\varepsilon^{k/2+1/2}\ .
\end{align*}
To test the sharpness of this estimate, we investigate the numerical convergence as follows. We let the Gaussian beam parameters for the $1$-st order Gaussian beam be given by
\begin{align*}
   x(0) &= \begin{bmatrix} 0\\ 0 \end{bmatrix} \ ,
  &p(0) &= \begin{bmatrix} -1 \\ 0 \end{bmatrix}\ ,
   &\phi_0(0) &= 0 \ , \\
  M(0) &= \begin{bmatrix} i & 0 \\ 0 & 2+i \end{bmatrix}\ ,
 & a_{0,0}(0) &= 1 \ ,
\end{align*}
and pick $H=-|p|$. The additional coefficients necessary for the Taylor
polynomials of phase and amplitudes for $2$-nd and $3$-rd order Gaussian beams
are all initially taken to be $0$. Note that even though the phase and
amplitudes for $1$-st, $2$-nd and $3$-rd order beams are the same at $t=0$,
their time derivatives will not be, as the ODEs for the higher order
coefficients are inhomogeneous. Thus, since the initial data for the exact
solution, $u$, matches the initial data for the Gaussian beam, $u$ depends on
the order of the beam. With this choice of parameters, we generate the $1$-st,
$2$-nd and $3$-rd order Gaussian beam solution at $t=\{0.5,1\}$. For $2$-nd and
$3$-rd order beams we additionally use a cutoff function with $\eta=1/10$. At
each $t$, we compute the rescaled energy norm of the difference $v_k-u$. The
asymptotic convergence as $\varepsilon\to 0$ is shown in
\fig{SingleBeamAsymptotics-ENorm}. We draw the attention of the reader to the
following features of the plots in \fig{SingleBeamAsymptotics-ENorm}:
\begin{enumerate}
 \item For $1$-st order beam: The error decays as $\varepsilon^1$.
 \item For $2$-nd and $3$-rd order beams: The error for larger $\varepsilon$ is
dominated by the error induced by the cutoff function. In this region, the error
decays exponentially fast. As $\varepsilon$ gets smaller, the error decays as
$\varepsilon^{3/2}$ for the $2$-nd order beam and $\varepsilon^2$ for the $3$-rd
order beam.
\end{enumerate}
The numerical results agree with the estimate given in \cite{Ralston:82} and,
thus, the estimate for single Gaussian beams is sharp.

\begin{figure}[ht!]
\begin{center}
\epsfig{file=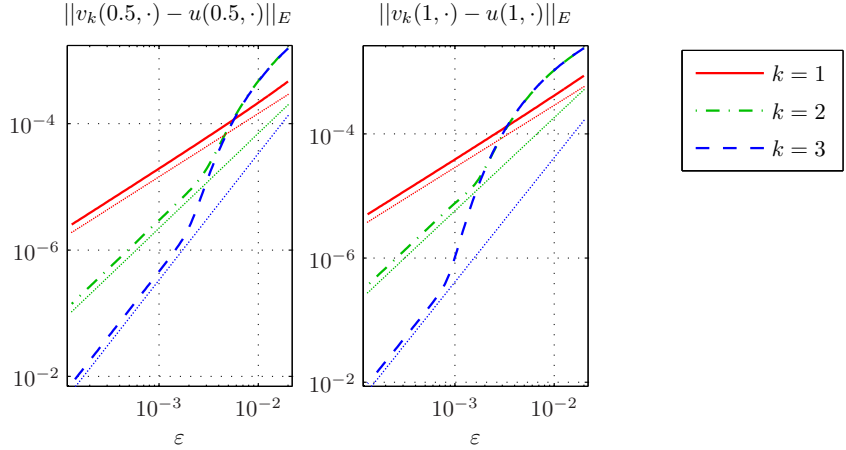}
\end{center}
\caption{Single Gaussian beams: Asymptotic behavior of $v_k-u$ in the rescaled
energy norm at $t=\{0.5,1\}$ for $k=\{1,2,3\}$ order beams. The results are
shown on log-log plots along with $c_1\varepsilon^{1}$, $c_2\varepsilon^{3/2}$,
and $c_3\varepsilon^{2}$ to help with the interpretation of the asymptotic
behavior. The asymptotic behavior agrees with the analytical estimates: $1$-st
order beam is $\Ordo(\varepsilon^{1})$, $2$-nd order beam is
$\Ordo(\varepsilon^{3/2})$, and $3$-rd order beam is $\Ordo(\varepsilon^{2})$.}
\lbfig{SingleBeamAsymptotics-ENorm}
\end{figure}



\subsection{Cusp Caustic}
We consider an example in $2$D that develops a cusp caustic. The initial data for $u$ at $t=0$ is given by
\begin{align*}
  u(0,y) &= e^{-10|y|^2} e^{i(-y_1+y_2^2)/\varepsilon}\ .
\end{align*}
Thus, the initial phase and amplitudes are given by
\begin{align*}
 \Phi(y) = -y_1 + y_2^2\ , \qquad
  A_{0,0}(y) = e^{-10|y|^2}, \qquad
  A_{0,1}(y) = 0 \ .
\end{align*}
For the initial data for $u_t(0,y)$, we take 
\begin{align*}
   u_t(0,y) &= \left( \frac{i}{\varepsilon}\Phi_t(y) \left[A_{0,0}(y)+\varepsilon A_{0,1}(y)\right] + \left[A_{0,0,t}(y)+\varepsilon A_{0,1,t}(y)\right]\right)e^{i\Phi(y)/\varepsilon} \ ,
\end{align*}
where the $\Phi_t$, $A_{0,0,t}$ and $A_{0,1,t}$ are obtained from the Gaussian
beam ODEs and various $y$ derivatives of $\Phi$, $A_{0,0}$ and $A_{0,1}$.
Specifically, we take $\Phi_t(y)=+|\nabla_y\Phi(y)|$ so that waves propagate in
the positive $y_1$ direction. As was shown in \cite{Tanushev:08}, this
particular example develops a cusp caustic at $t=0.5$ and two fold caustics for
$t>0.5$.

To form the Gaussian beam superposition solutions, it is enough to consider
Gaussian beams governed by $H=-|p|$, because of the initial data choice. We take
the initial Taylor coefficients to be
\begin{align*}
  x(0;z) &= \begin{bmatrix} z_1\\ z_2 \end{bmatrix} \ ,
 & p(0;z) &= \begin{bmatrix} -1 \\ 2z_2 \end{bmatrix}\ , \\
  \phi_0(0;z) &= -z_1+z_2^2\ , 
  &M(0;z) &= \begin{bmatrix} i & 0 \\ 0 & 2+i \end{bmatrix} \ ,\\
  \phi_\beta(0;z) &= 0, \ |\beta|= 3,4\ , 
  &a_{0,\beta}(0;z) &= \partial_y^\beta A_0(z), \ |\beta|=0,1,2 \ , \\
  a_{1,0} &= 0 \ , 
\end{align*}
where only the necessary parameters are used for each of the $1$-st, $2$-nd and
$3$-rd order beams. For $2$-nd and $3$-rd order beams, we additionally use a
cutoff function with $\eta=1/10$. We propagate the Gaussian beam solutions to
$t=\{0,0.25,0.5,0.75,1\}$. At each $t$, we compute the rescaled energy norm of
the difference $u_k-u$. The asymptotic convergence as $\varepsilon\to 0$ is
shown in \fig{ParabolicPhaseAsymptotics-ENorm}. 

\begin{figure}[h!]
\begin{center}
\epsfig{file=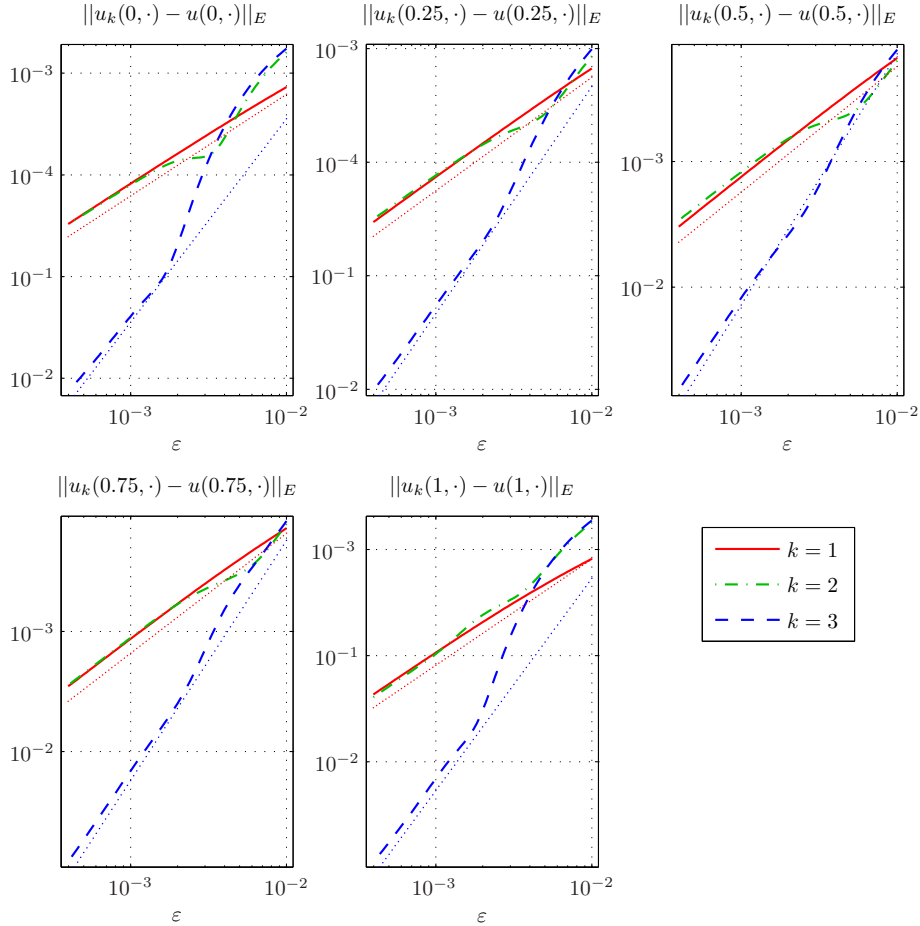}
\end{center}
\caption{Superpositions of Gaussian beams for cusp caustic: Asymptotic behavior
of ${u_k-u}$ in the rescaled energy norm at $t=\{0,0.25,0.5,0.75,1\}$ for
$k=\{1,2,3\}$ order Gaussian beam superpositions. The results are shown on
log-log plots along with $c_1\varepsilon^{1}$ and $c_2\varepsilon^{2}$ to help
with the interpretation of the asymptotic behavior.}
\lbfig{ParabolicPhaseAsymptotics-ENorm}
\end{figure}

We draw the attention of the
reader to the following features in the plots:
\begin{enumerate}
 \item At all $t$:
  \begin{enumerate}
    \item The asymptotic behavior of $1$-st and $2$-nd order solutions is the same and of order $\Ordo(\varepsilon^{1})$.
    \item The asymptotic behavior of $3$-rd order solution is of order $\Ordo(\varepsilon^{2})$.
    \item For large value of $\varepsilon$ the error for $2$-nd and $3$-rd order solutions is dominated by the error induced by the cutoff function. In this region the error decays exponentially.
    \item For midrange values of $\varepsilon$, $2$-nd order solutions experience a fortuitous error cancellation. This is due to the cutoff function. Changing the cutoff radius $\eta$ shifts this region.
  \end{enumerate}
  \item At $t=0.5$ and $t>0.5$, the asymptotic behavior of the error is unaffected by the cusp and fold caustics, respectively. 
\end{enumerate}

For this example we note that the convergence rate for odd beams $k\in\{1,3\}$
is in fact half an order better than our theoretical estimates. This improvement
was also observed and proved for a simplified setting in
\cite{MotamedRunborg:09}, where the analysis shows that the gain is due to error
cancellations between adjacent beams; it is therefore not present for single
Gaussian beams. The result in \cite{MotamedRunborg:09} is for the Helmholtz
equation and only concerns the pointwise error away from caustics. Here the
numerical results indicate that the same improvement appears for the wave
equation in the energy norm even when caustics are present. We conjecture that
this gain of convergence is due to error cancellations as well and that it is
present for all Gaussian beam superpositions with beams of odd order $k$.
Moreover, we conjecture that the theoretical result is sharp for beams of
even order $k$, giving us an optimal error estimate in the energy norm of
$\Ordo(\varepsilon^{\lceil k/2\rceil})$ for all $k$.

\section{Concluding Remarks}

Gaussian beams are asymptotically valid high frequency solutions to strictly
hyperbolic PDEs concentrated on a single curve through the physical domain. They
can also be constructed for the Schr\"odinger equation. Superpositions of
Gaussian beams provide a powerful tool to generate more general high frequency
solutions. In this work, we establish error estimates of the Gaussian beam
superposition for all strictly hyperbolic PDEs and the Schr\"odinger equation.
Our study gives the surprising conclusion that even if the superposition is done
over physical space, the error is still independent of the number of dimension
and of the presence of caustics. Thus, we improve upon earlier results by Liu
and Ralston \cite{LiuRalston:09,LiuRalston:10}.

\section*{Acknowledgement}
The authors would like to thank James Ralston and Bj\"orn Engquist for many
helpful discussions. HL's research was partially supported by the National
Science Foundation under Kinetic FRG grant No. DMS 07-57227 and grant No. DMS
09-07963. NMT was partially supported by the National Science Foundation
under grant No. DMS-0914465 and grant No. DMS-0636586 (UT Austin RTG).

\bibliographystyle{plain}
\bibliography{references}

\newcommand{\noopsort}[1]{} \newcommand{\singleletter}[1]{#1}
\begin{thebibliography}{10}

\bibitem{AETT:2010}
G.~Ariel, B.~Engquist, N.~M. Tanushev, and R.~Tsai.
\newblock Gaussian beam decomposition of high frequency wave fields using
  expectation-maximization.
\newblock {\em J. Comput. Phys.}, 230(6):2303--2321, 2011.

\bibitem{BabicBuldyrev:1991}
V.~M. Babi\v{c} and V.~S. Buldyrev.
\newblock {\em Short-Wavelength Diffraction Theory: Asymptotic Methods},
  volume~4 of {\em Springer Series on Wave Phenomena}.
\newblock Springer-Verlag, 1991.

\bibitem{Babic:1973}
V.~M. Babi\v{c} and T.~F. Pankratova.
\newblock On discontinuities of {G}reen's function of the wave equation with
  variable coefficient.
\newblock {\em Problemy Matem. Fiziki}, 6, 1973.
\newblock Leningrad University, Saint-Petersburg.

\bibitem{BabicPopov:1989}
V.~M. Babi\v{c} and M.~M. Popov.
\newblock Gaussian summation method (review).
\newblock {\em Izv. Vyssh. Uchebn. Zaved. Radiofiz.}, 32(12):1447--1466, 1989.

\bibitem{BougachaEtal:09}
S.~Bougacha, J.L. Akian, and R.~Alexandre.
\newblock {G}aussian beams summation for the wave equation in a convex domain.
\newblock {\em Commun. Math. Sci.}, 7(4):973--1008, 2009.

\bibitem{Cerveny_etal:1982}
V.~{\noopsort{C}}\v{C}erven\'{y}, M.~M. Popov, and I.~P\v{s}en\v{c}\'{i}k.
\newblock Computation of wave fields in inhomogeneous media --- {G}aussian beam
  approach.
\newblock {\em Geophys. J. R. Astr. Soc.}, 70:109--128, 1982.

\bibitem{EngRun:03}
B.~Engquist and O.~Runborg.
\newblock Computational high frequency wave propagation.
\newblock {\em Acta Numerica}, 12:181--266, 2003.

\bibitem{Hill:1990}
N.~R. Hill.
\newblock Gaussian beam migration.
\newblock {\em Geophysics}, 55(11):1416--1428, 1990.

\bibitem{Hill:2001}
N.~R. Hill.
\newblock Prestack {G}aussian beam depth migration.
\newblock {\em Geophysics}, 66(4):1240--1250, 2001.

\bibitem{HormanderFIO:1971}
L.~H{\"o}rmander.
\newblock Fourier integral operators. {I}.
\newblock {\em Acta Math.}, 127(1-2):79--183, 1971.

\bibitem{Hormander:71}
L.~H{\"o}rmander.
\newblock On the existence and the regularity of solutions of linear
  pseudo-differential equations.
\newblock {\em L'Enseignement Math\'ematique}, XVII:99--163, 1971.

\bibitem{HormanderI}
L.~H{\"o}rmander.
\newblock {\em The analysis of linear partial differential operators. {I}}.
\newblock Classics in Mathematics. Springer, Berlin, 2003.
\newblock Distribution theory and Fourier analysis, Reprint of the 1990
  edition.

\bibitem{HormanderIII}
L.~H{\"o}rmander.
\newblock {\em The analysis of linear partial differential operators. {III}}.
\newblock Classics in Mathematics. Springer, Berlin, 2007.
\newblock Pseudo-differential operators, Reprint of the 1994 edition.

\bibitem{JinWuYang:08}
S.~Jin, H.~Wu, and X.~Yang.
\newblock Gaussian beam methods for the {S}chr\"odinger equation in the
  semi-classical regime: {L}agrangian and {E}ulerian formulations.
\newblock {\em Commun. Math. Sci.}, 6:995--1020, 2008.

\bibitem{Katchalov_Popov:1981}
A.~P. Katchalov and M.~M. Popov.
\newblock Application of the method of summation of {G}aussian beams for
  calculation of high-frequency wave fields.
\newblock {\em Sov. Phys. Dokl.}, 26:604--606, 1981.

\bibitem{Keller:1958}
J.~B. Keller.
\newblock Corrected {B}ohr-{S}ommerfeld quantum conditions for nonseparable
  systems.
\newblock {\em Ann. Physics}, 4:180--188, 1958.

\bibitem{Klimes:1984}
L.~Klime\v{s}.
\newblock Expansion of a high-frequency time-harmonic wavefield given on an
  initial surface into {G}aussian beams.
\newblock {\em Geophys. J. R. astr. Soc.}, 79:105--118, 1984.

\bibitem{Klimes:86}
L.~Klime\v{s}.
\newblock Discretization error for the superposition of {G}aussian beams.
\newblock {\em Geophys. J. R. Astr. Soc.}, 86:531--551, 1986.

\bibitem{Kravtsov:64}
Yu.~A. Kravtsov.
\newblock On a modification of the geometrical optics method.
\newblock {\em Izv. VUZ Radiofiz.}, 7(4):664--673, 1964.

\bibitem{LeungQian:09}
S.~Leung and J.~Qian.
\newblock Eulerian {G}aussian beams for {S}chr\"odinger equations in the
  semi-classical regime.
\newblock {\em J. Comput. Phys.}, 228:2951--2977, 2009.

\bibitem{LeuQiaBur:07}
S.~Leung, J.~Qian, and R.~Burridge.
\newblock Eulerian {G}aussian beams for high frequency wave propagation.
\newblock {\em Geophysics}, 72:SM61--SM76, 2007.

\bibitem{LiuRalston:09}
H.~Liu and J.~Ralston.
\newblock Recovery of high frequency wave fields for the acoustic wave
  equation.
\newblock {\em Multiscale Model. Sim.}, 8(2):428--444, 2009.

\bibitem{LiuRalston:10}
H.~Liu and J.~Ralston.
\newblock Recovery of high frequency wave fields from phase space--based
  measurements.
\newblock {\em Multiscale Model. Sim.}, 8(2):622--644, 2010.

\bibitem{Ludwig:66}
D.~Ludwig.
\newblock Uniform asymptotic expansions at a caustic.
\newblock {\em Comm. Pure Appl. Math.}, 19:215--250, 1966.

\bibitem{MaslovFedoriuk:1981}
V.~P. Maslov and M.~V. Fedoriuk.
\newblock {\em Semiclassical approximation in quantum mechanics}, volume~7 of
  {\em Mathematical Physics and Applied Mathematics}.
\newblock D. Reidel Publishing Co., Dordrecht, 1981.
\newblock Translated from the Russian by J. Niederle and J. Tolar, Contemporary
  Mathematics, 5.

\bibitem{MotRun_GB1}
M.~Motamed and O.~Runborg.
\newblock A wave front {G}aussian beam method for high-frequency wave
  propagation.
\newblock In {\em Proceedings of WAVES 2007}, University of Reading, UK, 2007.

\bibitem{MotamedRunborg:09}
M.~Motamed and O.~Runborg.
\newblock Taylor expansion and discretization errors in {G}aussian beam
  superposition.
\newblock {\em Wave Motion}, 2010.

\bibitem{Popov:1982}
M.~M. Popov.
\newblock A new method of computation of wave fields using {G}aussian beams.
\newblock {\em Wave Motion}, 4:85--97, 1982.

\bibitem{QianYing:2010}
J.~Qian and L.~Ying.
\newblock Fast {G}aussian wavepacket transforms and {G}aussian beams for the
  {S}chr\"odinger equation.
\newblock {\em Journal of Computational Physics}, In Press, Corrected Proof:--,
  2010.

\bibitem{Ralston:82}
J.~Ralston.
\newblock Gaussian beams and the propagation of singularities.
\newblock In {\em Studies in partial differential equations}, volume~23 of {\em
  MAA Stud. Math.}, pages 206--248. Math. Assoc. America, Washington, DC, 1982.

\bibitem{RousseSwart:07}
V.~Rousse and T.~Swart.
\newblock Global ${L}^2$-boundedness theorems for semiclassical {F}ourier
  integral operators with complex phase.
\newblock arxiv:0710.4200v3, 2007.

\bibitem{RousseSwart:09}
V.~Rousse and T.~Swart.
\newblock A mathematical justification for the {H}erman--{K}luk propagator.
\newblock {\em Comm. Math. Phys.}, 286(2):725--750, 2009.

\bibitem{Runborg:07}
O.~Runborg.
\newblock Mathematical models and numerical methods for high frequency waves.
\newblock {\em Commun. Comput. Phys.}, 2:827--880, 2007.

\bibitem{Tanushev:08}
N.~M. Tanushev.
\newblock Superpositions and higher order {G}aussian beams.
\newblock {\em Commun. Math. Sci.}, 6(2):449--475, 2008.

\bibitem{TET:2009}
N.~M. Tanushev, B.~Engquist, and R.~Tsai.
\newblock Gaussian beam decomposition of high frequency wave fields.
\newblock {\em J. Comput. Phys.}, 228(23):8856--8871, 2009.

\bibitem{TQR:2007}
N.~M. Tanushev, J.~Qian, and J.~V. Ralston.
\newblock Mountain waves and {G}aussian beams.
\newblock {\em Multiscale Model. Simul.}, 6(2):688--709, 2007.

\end{thebibliography}

\end{document}